\newcommand{\CH}{\mathcal{H}}
\newcommand{\CU}{\mathcal{U}}
\newcommand{\N}{\mathbb{N}}
\newcommand{\Z}{\mathbb{Z}}
\newcommand{\wrw}{\stackrel{w}{\wr}}
\def\kP#1{\underset{ {{\tiny \mbox{ $#1$}}}}{\stackrel{\hspace{4pt} _k}{\raisebox{-.8ex}{\mbox{\fontsize{24.88}{30} $\ast$}}}}}
\def\2W#1{\underset{ {{\tiny \mbox{ $#1$}}}}{\stackrel{\hspace{4pt} _w}{\raisebox{-.8ex}{\mbox{\fontsize{24.88}{30} $\ast$}}}}}
\def\PRO#1{\underset{ {{\tiny \mbox{ $#1$}}}}{\raisebox{-.8ex}{\mbox{\fontsize{24.88}{30} $\ast$}}}}
\newcommand\vW{\stackrel{\tiny{\it{w}}}{\ast}}
\newcommand\vk{\mathrel{\stackrel{\makebox[0pt]{\mbox{\normalfont\tiny \it{k}}}}{\ast}}}
\newcommand{\sgn}{\operatorname{sign}}
\DeclareRobustCommand\bigop[2][1]{%
  \mathop{\vphantom{\bigoplus}\mathpalette\bigop@{{#1}{#2}}}\slimits@
}
\newcommand{\bigop@}[2]{\bigop@@#1#2}
\newcommand{\bigop@@}[3]{%
  \vcenter{%
    \sbox\z@{$#1\bigoplus$}%
    \hbox{\resizebox{\ifx#1\displaystyle#2\fi\dimexpr\ht\z@+\dp\z@}{!}{$\m@th#3$}}%
  }%
}
\newcommand{\COMM}{\DOTSB\bigop{\ast}}
\newtheorem{proposition}{Proposition}[section]
\newtheorem{theorem}[proposition]{Theorem}
\newtheorem{lemma}[proposition]{Lemma}
\newtheorem{corollary}[proposition]{Corollary}
\newtheorem{definition}[proposition]{Definition}
 \newtheorem{convention}[proposition]{Convention}
\newtheorem{remark}[proposition]{Remark}
\newtheorem{examples}[proposition]{Examples}
\newtheorem{example}[proposition]{Example}
\numberwithin{equation}{section}
\begin{document}

\title[Permanence properties of verbal product of groups]{Permanence properties of verbal products and verbal wreath products of groups}

\author[J. Brude, R. Sasyk]{Javier Brude $^{1,2}$ \MakeLowercase{and} Rom\'an Sasyk $^{1,3}$}

\address{$^{1}$Instituto Argentino de Matem\'aticas-CONICET Saavedra 15, Piso 3 (1083), Buenos Aires, Argentina.}

\address{$^{2}$Departamento de Matem\'atica, Facultad de Ciencias Exactas, Universidad Nacional de la Plata, Argentina.}

\address{$^{3}$Departamento de Matem\'atica, Facultad de Ciencias Exactas y Naturales, Universidad de Buenos Aires, Argentina.}

\email{\textcolor[rgb]{0.00,0.00,0.84}{jbrude@mate.unlp.edu.ar}}
\email{\textcolor[rgb]{0.00,0.00,0.84}{rsasyk@dm.uba.ar}}

\subjclass[2010]{20E22, 20F19, 20F65, 20F69}

\date{}

\keywords{verbal products of groups; verbal wreath products; sofic groups;
 hyperlinear groups; linear sofic groups; weakly sofic groups; Haagerup property; Kazhdan's property (T); exact groups}

\maketitle

\begin{abstract}
By means of analyzing the notion of verbal products of groups, 
we show that soficity, hyperlinearity, amenability, the Haagerup  property, the Kazhdan's property (T) and exactness 
are preserved under taking $k$-nilpotent products of groups, while being orderable is not preserved.
 We also study these properties for solvable and for Burnside products of groups.
We then show that if two discrete groups are sofic, or have the Haagerup property, 
their restricted verbal wreath product arising from nilpotent, solvable and certain Burnside products is also sofic or has the Haagerup property respectively. 
We also prove related results for hyperlinear, linear sofic and weakly sofic approximations.
 Finally, we give applications combining our work with the Shmelkin embedding to show that certain quotients of free groups are sofic or have the Haagerup property.

\end{abstract}

\section{Introduction}\label{section 1}

Given a family of groups, the direct sum and the free product provide ways of constructing new groups out of them.
Even though both operations are quite different, they share the next common properties
\begin{enumerate}
\item associativity;\label{1 intro}
\item commutativity;\label{2 intro}
\item the product contains subgroups which generate the product;\label{3 intro}
\item these subgroups are isomorphic to the original groups;\label{4 intro}
\item the intersection of a given one of these subgroups with the normal subgroup generated by the rest of these subgroups is the identity.\label{5 intro}
\end{enumerate}

In \cite{KUR53}, Kurosh asked if there were other operations on a family of groups that satisfy the above properties. This problem was solved in the affirmative in  \cite{GOL56NILP}, where, for each $k\in \N$, Golovin defined the $k$-nilpotent product of groups and proved they satisfy the aforementioned five properties. We recall that the $k$-nilpotent product is defined as follows.

\begin{definition}
Let  $\{G_i\}_{i \in \mathcal I}$ be  a family of groups indexed on a set $\mathcal I$ and consider $\mathcal F:=\COMM_{i \in \mathcal I} G_i$, the free product of the family.  Denote by $_{1}[G_i]^\mathcal F$ the cartesian subgroup of $\mathcal F$, namely the normal subgroup of $\mathcal F$ generated by commutators of the form $[g_i,g_j]$ with $g_i\in G_i$, $g_j\in G_j$ and $i\neq j$; and recursively define $_{k}[G_i]^\mathcal F:=[\mathcal F, _{k-1}[G_i]^\mathcal F]$.
The $k$-nilpotent product  of the family is the quotient group
\begin{equation*}\kP {i \in \mathcal I} G_i :=  \faktor{ \mathcal F}{ _{k}[G_i]^\mathcal F}
\end{equation*}
\end{definition}
Observe that the case $k=1$  is the direct sum of the family. Golovin showed that nilpotent products of finite groups are finite.  The articles \cite{GOL56METAB,McH}  and more recently \cite{SAS18}, further analyzed some properties of the $2$-nilpotent product of groups. 
Furthermore, for instance in  \cite{MR2450726,MR120272,MR1807651,MR2152690,MR2334753} some specific group theoretical properties like capability and the Baer invariant, of $k$-nilpotent products of finitely many finite cyclic groups were analyzed. Arguably, the study of $k$-nilpotent products of arbitrary groups, for $k\geq 3$, was missing.

In the mid fifties, Moran extended the work of Golovin in another direction by discovering a new way of defining operations in groups that fulfilled Kurosh's requirements. They are based on the notion of verbal subgroups. Recall that, roughly speaking, given a group $G$ and a subset $W$ of the free group on countably many symbols $F_{\infty}$, the verbal subgroup of $G$ for the words in $W$ is the subgroup of $G$ obtained by evaluating all the words of $W$ in the elements of $G$, (see \cite{NEU69}, or section $\S$\ref{section 2} of this article for more about verbal subgroups).  In \cite{MOR56}, Moran defined the verbal product of a family of groups  as follows.

\begin{definition}
Let  $\{G_i\}_{i \in \mathcal I}$ be  a family of groups indexed on a set $\mathcal I$ and consider $\mathcal F:=\COMM_{i \in \mathcal I} G_i$, the free product of the family. Let $W\subseteq F_{\infty}$ be a set of words and let $W(\mathcal F)$ be the corresponding verbal subgroup of $\mathcal F$. Denote by $[G_i]^\mathcal F$ the cartesian subgroup of $\mathcal F$, namely the normal subgroup of $\mathcal F$ generated by commutators of the form $[g_i,g_j]$ with $g_i\in G_i$, $g_j\in G_j$ and $i\neq j$.
The verbal product of the family is the quotient group
\begin{equation}\2W {i \in \mathcal I} G_i :=  \faktor{ \mathcal F}{W(\mathcal F)\cap [G_i]^\mathcal F}
\end{equation}
\end{definition}

 Albeit this definition is, in principle, different from the one given by Golovin; in \cite{MOR56, MOR58} Moran proved that the $k$-nilpotent product of groups is in fact an instance of verbal product of groups,  namely the verbal product obtained from a single word $n_k$,  that is recursively defined by the formulas $n_1:=[x_2,x_1]$; $n_k:=[x_{k+1},n_{k-1}]$.
 
 It is apparent that having other notions of products in groups aside from the free product and the direct sum, provide ``new'' ways of constructing groups. Hence it is of interest to study whether several structural properties and applications of the free product and the direct sum carry over to these more general operations on groups. To the best of our knowledge, aside from some articles in the late fifties and early sixties, (see, for instance, \cite{MOR58,MR0105436,MR77530,MR106941}), and the fact that products of groups were briefly mentioned in the classic books \cite{MR2109550,NEU69}, the line of research initiated by Golovin seems to have been neglected in recent years. 
 
 In \cite{SAS18}, the second named author took on the study of  $2$-nilpotent products of groups from the point of view of dynamics of groups actions and proved, among other things, that amenability, exactness, Haagerup property and Kazhdan's property (T) are preserved under taking $2$-nilpotent products of two groups. The aforementioned properties of groups are of relevance in several areas of mathematics, for instance they are at the core of the connections between  group theory and operator algebras (see, for instance,  \cite{BEK08,BRO08,CHER01,MR1308046,MR776417}, and references therein). As such, it was tempting to see if the work done in \cite{SAS18} could be extended to other nilpotent products of groups. However, since certain central sequence, that was the key ingredient in several of the proofs in \cite{SAS18}, is not present in other nilpotent products, a new idea was required to tackle this problem. 
What was even more troubling about \cite{SAS18} was that soficity and hyperlinearity, two important properties of groups that arise from the study of dynamics of group actions and that have been of very much interest in recent years due to their many applications to problems of current interest (see, for instance, \cite{MR2072092,MR2460675,Thom-ICM}), were outside the scope of the techniques deployed there. 
One of the original motivations of the present article was to address this void. Here, we solve these problems by means of analyzing the structure of the verbal products of groups. Our first main result is as follows.

\begin{theorem}\label{theorem 1 intro}
Soficity, hyperlinearity, linear soficity, weak soficity, amenability, the Haagerup approximation property, the Kazhdan's property (T) and exactness are preserved under taking $k$-nilpotent products of two countable, discrete groups.
\end{theorem}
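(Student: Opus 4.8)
The plan is to realise the $k$-nilpotent product $P$ as an extension of $G_1\times G_2$ with a controlled nilpotent kernel, and then, for each property, to invoke a suitable permanence theorem for group extensions. Write $\mathcal F=G_1\ast G_2$, $C_j:={}_{j}[G_i]^{\mathcal F}$ (so that $C_{j+1}=[\mathcal F,C_j]$), and $P:=\faktor{\mathcal F}{C_k}$. Since $C_1$ is the kernel of $\mathcal F\twoheadrightarrow G_1\times G_2$, the subgroup $N_k:=\faktor{C_1}{C_k}$ sits in a short exact sequence
\[
1\longrightarrow N_k\longrightarrow P\longrightarrow G_1\times G_2\longrightarrow 1 .
\]
From $C_{j+1}=[\mathcal F,C_j]$ one gets $[P,\overline{C_j}]=\overline{C_{j+1}}$ for $j\le k-1$, so $N_k=\overline{C_1}$ is nilpotent of class $\le k-1$ (hence amenable), with abelian layers $L_j:=\faktor{C_j}{C_{j+1}}$, the top one $L_{k-1}$ being central in $P$. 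Commutator calculus in $\mathcal F$ (refining Moran's realisation of $P$ as a verbal product) moreover shows that each $L_j$ is naturally a $\mathbb Z[G_1\times G_2]$-module and that, with $M:=\faktor{C_1}{[C_1,C_1]}$ and $I$ the augmentation ideal of $\mathbb Z[G_1\times G_2]$, one has $N_k^{\mathrm{ab}}\cong\faktor{M}{I^{\,k-1}M}$; since $I/I^{2}\cong(G_1\times G_2)^{\mathrm{ab}}$ and $M/IM\cong G_1^{\mathrm{ab}}\otimes_{\mathbb Z}G_2^{\mathrm{ab}}$, every $I$-adic layer of $N_k^{\mathrm{ab}}$ is a subquotient of a tensor product of copies of $G_1^{\mathrm{ab}}$, $G_2^{\mathrm{ab}}$ and $(G_1\times G_2)^{\mathrm{ab}}$. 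In particular, \emph{$N_k$ is finite whenever $G_1^{\mathrm{ab}}$ and $G_2^{\mathrm{ab}}$ are finite}.

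The properties then fall into three groups. \emph{Amenability and exactness} are closed under finite direct products and under group extensions; since $N_k$ is amenable and $G_1\times G_2$ is amenable (resp.\ exact) exactly when $G_1$ and $G_2$ are, $P$ is amenable (resp.\ exact). \emph{Soficity, hyperlinearity, linear soficity and weak soficity} are closed under finite direct products and under extensions with amenable kernel, so applying this to the displayed sequence --- with $N_k$ amenable and $G_1\times G_2$ in the relevant class --- shows $P$ is sofic, resp.\ hyperlinear, linear sofic, weakly sofic. \emph{Property (T)} follows from the finiteness statement: if $G_1$ and $G_2$ have (T) then $G_1^{\mathrm{ab}}$ and $G_2^{\mathrm{ab}}$ are finite, so $N_k$ is finite and $P$ is a finite extension of the Kazhdan group $G_1\times G_2$, hence Kazhdan; the converse is clear from $P\twoheadrightarrow G_1\times G_2$.

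The Haagerup property is the main obstacle, because --- in contrast to (T) --- it is \emph{not} inherited by arbitrary extensions with amenable kernel (for instance $\mathbb Z^2\rtimes\SL_2(\mathbb Z)$ is amenable-by-Haagerup but lacks the Haagerup property), so the amenability of $N_k$ is not enough; one has to use the module structure of the layers. Passing to a direct limit, one may assume $G_1$ and $G_2$ are finitely generated, so each $L_j$ is a finitely generated abelian group, and filtering by the $C_j$ realises $P$ as a finite tower $P=P_{k-1}\twoheadrightarrow\cdots\twoheadrightarrow P_1=G_1\times G_2$ with $\ker(P_j\to P_{j-1})=L_j$ abelian ($P_{k-1}\to P_{k-2}$ being central). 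For each step one shows, from the free-product origin of the $G_1\times G_2$-action, that $L_j$ embeds $(G_1\times G_2)$-equivariantly into a permutation module with amenable point stabilisers, so that $P_j$ embeds into a restricted wreath-type product of an abelian group by $P_{j-1}$; one then applies the Cornulier--Stalder--Valette permanence results (a wreath product --- and, suitably interpreted, a central extension --- of a Haagerup group by an amenable group is again Haagerup) and climbs the tower from $P_1=G_1\times G_2$, which is Haagerup when $G_1,G_2$ are. (Alternatively, one may embed $P$ directly into a verbal wreath product of $G_1$ and $G_2$ and use the corresponding permanence statement for verbal wreath products.) The crux --- and the new ingredient compared with the $k=2$ case treated in \cite{SAS18} --- is controlling the $\mathbb Z[G_1\times G_2]$-module structure of the $L_j$ sharply enough for these wreath-product permanence results to apply.
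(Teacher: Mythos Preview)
Your central short exact sequence
\[
1\longrightarrow N_k\longrightarrow P\longrightarrow G_1\times G_2\longrightarrow 1
\]
has the amenable group on the wrong side for the approximation properties. You assert that soficity, hyperlinearity, linear soficity and weak soficity ``are closed under \dots\ extensions with amenable kernel''. This is not known: even the case of an \emph{abelian} kernel (abelian-by-sofic) is a well-known open problem, and the paper says so explicitly in the remark following Proposition~\ref{StabProp}. The permanence theorems that are actually available (Elek--Szab\'o and its variants) require an amenable \emph{quotient}, not an amenable kernel. So as written, the argument for items \ref{sofic}--\ref{linear sofic} has a genuine gap.

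The paper avoids this by using a different exact sequence (Corollary~\ref{coro on short exact sequence}):
\[
1\longrightarrow W(A)\times W(B)\longrightarrow A\vW B\longrightarrow \faktor{A}{W(A)}\vW\faktor{B}{W(B)}\longrightarrow 1,
\]
with $W=\{n_k\}$. Here the kernel $W(A)\times W(B)$ inherits the property in question (it is a product of subgroups of $A$ and $B$), while the quotient is a $k$-nilpotent product of nilpotent groups, hence nilpotent, hence amenable. Now \emph{all} of the properties go through uniformly via ``property-by-amenable'' extension results --- including the Haagerup property (Haagerup-by-amenable is Haagerup, \cite[Example~6.1.6]{CHER01}), so the elaborate wreath-product embedding you sketch is unnecessary. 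Your treatment of property~(T) via finiteness of $N_k$ is in the same spirit as the paper's Proposition~\ref{StabProp T}, though the paper instead argues that the amenable (T) quotients $A/W(A)$, $B/W(B)$ are finite and invokes Golovin's result that nilpotent products of finite groups are finite.
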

 On the other hand, we give examples that show that the properties of being orderable or bi-orderable are not preserved under taking $k$-nilpotent products of groups, for $k\geq 2$. 
 
We also take the opportunity to start studying other families of verbal products of groups, the {\it solvable products} and the {\it Burnside products} introduced by Moran in \cite{MOR58}, 
(for their definition,  see examples \ref{examples of products}\ref{item3examples product} and \ref{examples of products}\ref{item4examples product}).
In these cases we show the following theorems.
\begin{theorem}\label{theorem 3 intro}
Soficity, hyperlinearity,  linear soficity, weak soficity, amenability, the Haagerup approximation property and exactness are preserved under taking solvable products of two countable, discrete groups. On the other hand, Kazhdan's property (T) is not preserved under taking solvable products of groups.
\end{theorem}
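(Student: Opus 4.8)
The plan is to exhibit the solvable product as a controlled extension of the direct product and then feed this into the extension/permanence machinery. Fix the derived length $c\ge 2$ (for $c=1$ the solvable product is just the direct product); write $G_1\ast_W G_2$ for the solvable product of class $c$, that is, the verbal product for the law $W$ defining solvability of class $c$, let $\mathcal F:=G_1\ast G_2$ be the free product and $N$ its cartesian subgroup, i.e. the kernel of the canonical surjection $\mathcal F\to G_1\times G_2$. By definition $G_1\ast_W G_2=\mathcal F/(\mathcal F^{(c)}\cap N)$, where $\mathcal F^{(c)}$ is the $c$-th derived subgroup, and since $\mathcal F^{(c)}\cap N\subseteq\mathcal F^{(c)}$ we obtain a short exact sequence
$$1\longrightarrow A\longrightarrow G_1\ast_W G_2\longrightarrow G_1\times G_2\longrightarrow 1,\qquad A:=N/(\mathcal F^{(c)}\cap N)\cong N\mathcal F^{(c)}/\mathcal F^{(c)}.$$
Thus $A$ embeds into $\mathcal F/\mathcal F^{(c)}$, a solvable group of derived length at most $c$; hence $A$ is solvable, in particular amenable. (By the Kurosh subgroup theorem $N$ meets every conjugate of $G_1$ and of $G_2$ trivially, so $N$ is free and $A$ is even a quotient of a free solvable group; but only amenability of $A$ is used below.)

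With this structure in hand, soficity, hyperlinearity, linear soficity, weak soficity, amenability and exactness are all obtained at once. Each of these classes is closed under direct products, so $G_1\times G_2$ lies in the class as soon as $G_1$ and $G_2$ do; and each is closed under extensions $1\to A\to G\to Q\to 1$ with $A$ amenable and $Q$ in the class (for soficity and hyperlinearity this is the theorem of Elek and Szab\'o, with the linearly sofic and weakly sofic cases following from its analogues; for exactness one combines ``amenable $\Rightarrow$ exact'' with closure of exactness under extensions, following Kirchberg--Wassermann; for amenability it is immediate). Applying this to the displayed short exact sequence gives the desired property for $G_1\ast_W G_2$.

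The Haagerup property is the delicate point, because it is \emph{not} closed under extensions with amenable --- even abelian --- kernel, as groups of the form $\Z^2\rtimes\SL_2(\Z)$ and their relatives show; so I would not argue from the bare extension. Instead I would exploit the module structure: the conjugation action of $G_1\times G_2=\mathcal F/N$ on $N^{\mathrm{ab}}$ makes it a $\Z[G_1\times G_2]$-module, and iterating the Shmelkin embedding (the verbal-product analogue of Magnus's embedding, applied to the product variety $\mathfrak A\cdot\mathfrak A^{c-1}$) realizes $G_1\ast_W G_2$ as a subgroup of an iterated restricted wreath product of the shape
$$\Bigl(\bigoplus\nolimits_{I_{c-1}}\Z\Bigr)\wr\Bigl(\cdots\wr\Bigl(\bigl(\bigoplus\nolimits_{I_1}\Z\bigr)\wr(G_1\times G_2)\Bigr)\Bigr)$$
with $c-1$ wreathing steps --- equivalently, one inducts on $c$ through the restricted verbal wreath product whose base is the solvable product of class $c-1$ and whose fibre is abelian. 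By the theorem of Cornulier, Stalder and Valette, a restricted wreath product of two groups with the Haagerup property again has it; since abelian groups are Haagerup and $G_1\times G_2$ is Haagerup whenever $G_1,G_2$ are, an induction on the number of steps yields the Haagerup property for the displayed group, hence for its subgroup $G_1\ast_W G_2$. The main obstacle is precisely to set this embedding up for verbal products rather than for relatively free groups, which is where the verbal-wreath-product results of the paper do the work; after that the only analytic input is the Cornulier--Stalder--Valette theorem together with closure of the Haagerup property under subgroups and direct products.

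Finally, Kazhdan's property (T) fails even for the smallest inputs. For every $c\ge 2$ the group $G_1\ast_W G_2$ surjects onto $\mathcal F/\mathcal F^{(c)}$, a solvable group, and for any two nontrivial $G_1,G_2$ this quotient is infinite: already $\mathcal F/\mathcal F''$ is, since $\mathcal F=G_1\ast G_2$ either is the infinite dihedral group (when $G_1\cong G_2\cong\Z/2$) or contains a nonabelian free subgroup. An infinite solvable group does not have property (T), and (T) passes to quotients; hence $G_1\ast_W G_2$ does not have property (T). Concretely, $G_1=G_2=\Z/3$ --- finite, hence trivially with property (T) --- already produces, for every $c\ge 2$, a solvable product without property (T), in sharp contrast with the nilpotent case, where nilpotent products of finite groups are finite.
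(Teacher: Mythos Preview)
Your short exact sequence
\[
1\longrightarrow A\longrightarrow G_1\ast_W G_2\longrightarrow G_1\times G_2\longrightarrow 1,
\qquad A\ \text{solvable},
\]
is correct, but the permanence claim you feed it into is not. The theorem of Elek and Szab\'o asserts that extensions with \emph{sofic kernel and amenable quotient} are sofic; you have reversed the roles, asserting closure under extensions with amenable kernel and sofic quotient. That direction is open --- in fact the paper explicitly remarks (Remark~3.3) that it is not known whether abelian-by-sofic extensions are sofic, and that this was precisely the obstruction that prompted the present work. The same reversal invalidates your arguments for hyperlinearity, linear soficity, and weak soficity. Your sequence does work for amenability and exactness, but for the sofic-type approximations it proves nothing.

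The paper repairs this by using a \emph{different} short exact sequence, the one coming from Corollary~\ref{coro on short exact sequence}:
\[
1\longrightarrow W(G_1)\times W(G_2)\longrightarrow G_1\vW G_2\longrightarrow \faktor{G_1}{W(G_1)}\vW\faktor{G_2}{W(G_2)}\longrightarrow 1.
\]
Here the quotient is a solvable product of solvable groups, hence solvable (Proposition~\ref{preserves basic}), hence amenable; and the kernel is a direct product of subgroups of $G_1$ and $G_2$, so it inherits soficity, hyperlinearity, the Haagerup property, etc. Now Elek--Szab\'o applies in the correct direction, and the Haagerup case is immediate from the fact that Haagerup-by-amenable extensions are Haagerup --- no Shmelkin embedding or iterated wreath products are needed. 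Your property~(T) counterexample is fine and agrees in spirit with the paper's (Proposition~\ref{commutator of abelian in solvable product} and its corollary).
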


\begin{theorem}\label{theorem 3B intro}
Soficity, hyperlinearity,  linear soficity, weak soficity, amenability, the Haagerup approximation property, exactness and  Kazhdan's property (T) are preserved under taking $k$-Burnside products for $k=2,3,4,6$ of two finitely generated, discrete groups.
\end{theorem}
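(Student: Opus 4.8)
The strategy is to show that the $k$-Burnside product $B$ of $G_1$ and $G_2$ is abstractly commensurable with the direct product $G_1\times G_2$, and then, for each of the eight properties in the statement, to invoke its stability under finite direct products and under abstract commensurability. Write $\mathcal F:=G_1\ast G_2$ and $W:=\{x^k\}$, so that $B=\mathcal F/\bigl(W(\mathcal F)\cap[G_i]^{\mathcal F}\bigr)$. I would first record two exact sequences for $B$: the ``cartesian'' one $1\to\overline{[G_i]^{\mathcal F}}\to B\xrightarrow{q}G_1\times G_2\to 1$, where $\overline{[G_i]^{\mathcal F}}$ is the image in $B$ of the cartesian subgroup $[G_i]^{\mathcal F}$; and, since $W(\mathcal F)\cap[G_i]^{\mathcal F}\subseteq W(\mathcal F)$, the ``verbal'' one $1\to\ker p\to B\xrightarrow{p}\mathcal F/W(\mathcal F)\to 1$. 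A short diagram chase gives $\ker p\cap\ker q=1$, so $q$ carries $\ker p$ isomorphically onto $q(\ker p)=W(\mathcal F)[G_i]^{\mathcal F}/[G_i]^{\mathcal F}$; and since surjections take verbal subgroups onto verbal subgroups, this image is the verbal subgroup $W(G_1\times G_2)=W(G_1)\times W(G_2)$ of $G_1\times G_2$, where $W(G_i)=\langle g^k:g\in G_i\rangle$.

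The one genuinely non-formal ingredient is the affirmative solution of the Burnside problem for the exponents $2,3,4,6$ (Burnside; Sanov; M.~Hall): a finitely generated group of exponent $k$ is finite whenever $k\in\{2,3,4,6\}$. Since $G_1$ and $G_2$, and therefore $\mathcal F$, are finitely generated, the group $\mathcal F/W(\mathcal F)$ is finitely generated of exponent $k$, hence finite, so $\ker p$ has finite index in $B$. For the same reason each $G_i/W(G_i)$ is finite, so $W(G_i)$ has finite index in $G_i$ and $W(G_1)\times W(G_2)$ has finite index in $G_1\times G_2$. Combined with the identification of the previous paragraph, $\ker p\cong W(G_1)\times W(G_2)$ is a common finite-index subgroup of $B$ and of $G_1\times G_2$; in particular $B$ and $G_1\times G_2$ are abstractly commensurable.

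To finish: if $G_1$ and $G_2$ have one of the listed properties, then so does $G_1\times G_2$ — each of soficity, hyperlinearity, linear soficity, weak soficity, amenability, the Haagerup property, exactness and Kazhdan's property~(T) is preserved under finite direct products — and hence so does $B$, because each of these properties is an abstract-commensurability invariant, passing both to finite-index subgroups (for property~(T) this is exactly where the restriction to \emph{finite} index matters) and to finite-index overgroups. I do not expect any real difficulty beyond citing these permanence facts together with the Burnside finiteness theorem; it is the latter that lets the Burnside case be handled by this soft commensurability argument, rather than by the subtler extension techniques needed for the nilpotent and solvable products of Theorems~\ref{theorem 1 intro} and~\ref{theorem 3 intro}, and it also makes transparent why both hypotheses enter — it is precisely finite generation together with $k\in\{2,3,4,6\}$ that forces $\mathcal F/W(\mathcal F)$ and the quotients $G_i/W(G_i)$ to be finite.
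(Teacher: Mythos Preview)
Your argument is correct. Structurally it rests on the same two ingredients as the paper's proof: the identification $W(A\vW B)\cong W(A)\times W(B)$ (your $\ker p$, which is Proposition~\ref{prod}/Corollary~\ref{coro on short exact sequence} in the paper) and the Burnside--Sanov--Hall finiteness theorem to force the quotient $\mathcal F/W(\mathcal F)\cong (A\vW B)/W(A\vW B)$ to be finite. The paper then feeds this into the same short exact sequence~\eqref{sec} used for the nilpotent and solvable cases and invokes the ``property-by-amenable'' extension results (Elek--Szab\'o for soficity, etc.) together with the property~(T) extension argument of Proposition~\ref{StabProp T}.

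Your packaging is slightly different and, for this particular case, a bit cleaner: rather than treating \eqref{sec} as an extension with amenable quotient, you observe that the quotient is actually \emph{finite}, so $W(G_1)\times W(G_2)$ sits with finite index simultaneously in $G_1\vW G_2$ and in $G_1\times G_2$, giving abstract commensurability. This lets you replace the (property)-by-amenable permanence theorems by the more elementary stability under finite-index subgroups and overgroups, which is all you need here. The trade-off is that the paper's formulation is uniform across the nilpotent, solvable and Burnside cases (only the reason the quotient is amenable changes), whereas your commensurability shortcut is specific to the Burnside case with $k\in\{2,3,4,6\}$.
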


Since Moran proved that solvable products between two finite groups can be infinite (see also Proposition \ref{commutator of abelian in solvable product}), the results and examples arising from solvable products are, in general, different from the ones involving nilpotent products.

Theorems \ref{theorem 1 intro}, \ref{theorem 3 intro} and \ref{theorem 3B intro} together with the associativity of the verbal products, will allow us to prove the next corollary.
 \begin{corollary}
 \label{amenability of many products}
 If $\{G_i\}_{i\in\mathcal I}$ is a countable family of countable, discrete, sofic (hyperlinear,  linear sofic,  weakly sofic, amenable,  Haagerup, exact) groups, then the group 
$\2W {i\in\mathcal I} G_i$ is sofic, (hyperlinear,  linear sofic, weakly sofic, amenable, Haagerup, exact respectively), when the verbal product is a nilpotent  product, a solvable  product or a $k$-Burnside product for $k=2,3,4,6$.
\end{corollary}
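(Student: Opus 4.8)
The plan is to bootstrap the two–factor theorems already proved (Theorems~\ref{theorem 1 intro}, \ref{theorem 3 intro} and \ref{theorem 3B intro}) to an arbitrary countable family, using two features of verbal products: their \emph{associativity} (property~\ref{1 intro} of the introduction) and the fact that the relevant construction commutes with filtered colimits, together with the elementary observation that each property in the list is preserved under directed unions of groups.

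First I would treat a finite family $\mathcal I=\{1,\dots,n\}$. Associativity lets us write $\2W{i\le n} G_i \cong \big(\cdots (G_1\vW G_2)\vW G_3\cdots\big)\vW G_n$, so an induction of length $n-1$ on the two–factor statements applies: each partial product inherits the property from the inductive hypothesis, and one more application of the two–factor theorem with the next factor $G_{k+1}$ carries it forward. In the Burnside case Theorem~\ref{theorem 3B intro} requires finitely generated factors; since a finitely generated group stays finitely generated under a $k$-Burnside product, the hypothesis is preserved along the induction, and for a general countable $G_i$ one first writes it as the directed union of its finitely generated subgroups and uses the colimit argument below to reduce to the finitely generated case.

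Now let $\mathcal I$ be countably infinite, enumerated as $\N$, and set $P_1:=G_1$, $P_n:=\2W{i\le n}G_i$, which by associativity is $P_{n-1}\vW G_n$ for $n\ge 2$. Applying the Kurosh properties \ref{3 intro}--\ref{5 intro} to the two–factor verbal product $P_{n-1}\vW G_n$ produces a canonical injection $P_{n-1}\hookrightarrow P_n$, and these are mutually compatible, so we obtain a direct system $P_1\hookrightarrow P_2\hookrightarrow\cdots$. The key structural point is the identification
\[ \2W{i\in\N} G_i \;\cong\; \varinjlim_n P_n, \]
which is obtained by unwinding the definition: the free product $\COMM_{i\in\N}G_i$ is the directed union of the $\COMM_{i\le n}G_i$; a verbal subgroup is generated by values of words on \emph{finitely many} elements, so $W(\mathcal F)$ is the directed union of the $W\big(\COMM_{i\le n}G_i\big)$, and similarly for the cartesian subgroup; since passing to quotients commutes with directed colimits, the whole verbal product construction commutes with the filtered colimit over finite subsets of $\mathcal I$.

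Finally, every property in the statement is closed under directed unions of groups: amenability and the Haagerup approximation property are classical in this respect, soficity, hyperlinearity, linear soficity and weak soficity are local properties (a countable group has them as soon as every finitely generated subgroup does) and hence pass to directed unions, and exactness of discrete groups is likewise stable under increasing unions. Combining this with the finite case applied to each $P_n$ gives the property for $\varinjlim_n P_n\cong \2W{i\in\mathcal I}G_i$. I expect the main obstacle to be the structural identification with the colimit — in particular, verifying that the connecting maps $P_{n-1}\hookrightarrow P_n$ are injective and compatible, which is exactly the point where associativity and the Kurosh properties \ref{3 intro}--\ref{5 intro} for a two–factor verbal product enter; once that is settled, the permanence under directed unions is routine and the two–factor theorems supply the rest.
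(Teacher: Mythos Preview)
Your approach is essentially the same as the paper's: for finite $\mathcal I$ use associativity and induction on the two-factor theorems, and for infinite $\mathcal I$ identify $\2W{i\in\N}G_i$ with the increasing union $\bigcup_n \2W{i\le n}G_i$ and invoke closure of each property under countable increasing unions. The paper supplies the injectivity and compatibility of the inclusions via its Lemma~\ref{inclusion in verbal products} (rather than appealing to the abstract Kurosh properties), and it does not explicitly spell out the extra colimit reduction you propose for the finitely-generated hypothesis in the Burnside case---your treatment is in fact a bit more careful on that point.
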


Verbal products are suited to do a construction similar to the restricted wreath product, called {\it restricted verbal wreath products}, where the direct sum is replaced by  a verbal product,
(we refer to section $\S$\ref{section 5} for the precise  definition).
This notion was introduced by Shmelkin in \cite{MR0193131} to provide a generalization of the Magnus embedding. Since then, verbal wreath products have been used mainly as a tool in the realm of varieties of groups. Indeed, in the introduction to her classic book, \cite{NEU69}, H. Neumann wrote {\it ``Shmelkin's embedding theorem should, I believe, be made the starting point of the treatment of product varieties''}, (for further references, see for instance, \cite{MR0217158, MR879439,MR3744779}).
However, it seems that verbal wreath products had not been studied from the point of view of dynamics of group actions until \cite{MR2503171}, where they were employed, unbeknownst to their previous existance, in the classification of von Neumann algebras, (see \cite[$\S$1]{SAS18} for more details about it).

Motivated by a theorem of Cornulier, Stalder and Valette asserting that the restricted wreath product of groups with the Haagerup property has the  Haagerup property \cite{CSV1,CSV2}; in \cite{SAS18} it was showed that a similar result holds true for the restricted second nilpotent wreath product.  In this article we will explain why the same proof presented in \cite{SAS18} also serves to prove the following more general statement.

\begin{theorem}\label{theo2}
Let $G$ and $H$ be countable, discrete groups with the Haagerup property. Let $W\subseteq F_{\infty}$ be a set of words 
such that its corresponding verbal product preserves the Haagerup property. 
Then the restricted verbal wreath product between $G$ and $H$
 has the Haagerup property. In particular, restricted nilpotent wreath products, restricted solvable wreath products and restricted $k$-Burnside wreath products for $k=2,3,4,6$ between groups with the Haagerup property have the Haagerup property.
\end{theorem}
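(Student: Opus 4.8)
The plan is to run the Hilbert space argument of \cite{SAS18} in the generality of verbal wreath products. Recall that a countable discrete group $\Gamma$ has the Haagerup property if and only if it admits a metrically proper affine isometric action on a Hilbert space, equivalently a proper conditionally negative definite function. Following Shmelkin's construction \cite{MR0193131}, write the restricted verbal wreath product as $\Gamma = \mathcal{W}\rtimes H$, where the base group $\mathcal{W}=\2W{h\in H}G_h$ is the verbal product (for the word set $W$) of copies $G_h\cong G$ indexed by the elements of $H$, and $H$ acts on $\mathcal{W}$ by permuting the factors; fix throughout a proper conditionally negative definite function $\psi_H$ on $H$.

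The core of the argument is to produce a metrically proper affine isometric action of the base group $\mathcal{W}$ which is $H$-equivariant, i.e. carries an orthogonal representation $\sigma$ of $H$ on the same Hilbert space intertwining both the linear part and the cocycle with the permutation action $h\cdot w$ of $H$ on $\mathcal W$. Granting this, $\sigma$ and the equivariant affine action of $\mathcal{W}$ assemble into an affine isometric action of $\Gamma=\mathcal{W}\rtimes H$ whose cocycle ignores the $H$-coordinate; taking its direct sum with the pull-back through $\Gamma\to H$ of a proper affine action of $H$ produces an affine isometric action of $\Gamma$ with associated conditionally negative definite function
\[
\psi_\Gamma(w,h)=\psi_{\mathcal W}(w)+\psi_H(h).
\]
This $\psi_\Gamma$ is proper: if $(w_i,h_i)$ eventually leaves every finite subset of $\Gamma$, then either $h_i\to\infty$ in $H$ and $\psi_H(h_i)\to\infty$, or, after passing to a subsequence on which $h_i$ is constant, $w_i\to\infty$ in $\mathcal W$ and $\psi_{\mathcal W}(w_i)\to\infty$ by properness of $\psi_{\mathcal W}$. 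Hence $\Gamma$ has the Haagerup property, and for the ``in particular'' clause one invokes Theorems \ref{theorem 1 intro}, \ref{theorem 3 intro} and \ref{theorem 3B intro} to know the hypothesis is met by nilpotent, solvable and $k$-Burnside ($k=2,3,4,6$) products.

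It remains to build the $H$-equivariant proper affine action of $\mathcal W$, and here the hypothesis --- so that $\mathcal W$ is Haagerup --- is used together with the structure of $\mathcal W$. I would assemble it from two $H$-equivariant affine actions. The first comes from the natural $H$-equivariant surjection $\mathcal W\twoheadrightarrow\bigoplus_{h\in H}G_h$: pull back the evident $H$-equivariant affine action of the restricted direct sum $\underline g\mapsto\bigoplus_h(\text{affine action of }G\text{ from }\psi_G)$ with $H$ permuting the summands; this is $H$-equivariant and metrically proper modulo the cartesian subgroup $C:=\ker\big(\mathcal W\twoheadrightarrow\bigoplus_{h}G_h\big)$. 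The second must be an $H$-equivariant affine action of $\mathcal W$ that is proper along $C$. The structural point that makes this possible is that for each of the verbal products in the statement $C$ is \emph{amenable}: it is a subgroup of $\mathcal F/W(\mathcal F)$, a group in the variety defined by $W$, hence nilpotent for nilpotent products, solvable for solvable products, and locally finite for $k$-Burnside products with $k\in\{2,3,4,6\}$. Moreover the lower central / derived / Burnside filtration of $C$ has layers that are $\mathrm{Sym}(H)$-permutation modules built from tensor powers of $G^{\mathrm{ab}}$, and one exploits this to manufacture an $H$-equivariant proper conditionally negative definite function along $C$ layer by layer and promote it to $\mathcal W$, exactly as in the second-nilpotent case of \cite{SAS18}; adding the two actions gives the desired $H$-equivariant proper affine action of $\mathcal W$. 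A harmless preliminary reduction --- using that the Haagerup property passes to directed unions of subgroups --- lets one assume $G$ (and, in the nilpotent and solvable cases, $H$) finitely generated, keeping all index sets finite at each stage.

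The main obstacle is precisely this last step. In \cite{SAS18} it is transparent because for the second nilpotent product $C$ is \emph{central} in $\mathcal W$, so there is no inner-automorphism interference and the whole construction is visibly invariant under permuting the factors. The content of the claim ``the same proof works'' is that for higher nilpotent products, for solvable products and for the Burnside products above --- where $\mathcal W$ now acts nontrivially on the amenable group $C$ by conjugation --- the Haagerup-witnessing construction for $\2W{h\in H}G_h$ underlying Theorems \ref{theorem 1 intro}, \ref{theorem 3 intro} and \ref{theorem 3B intro} can still be carried out $\mathrm{Sym}(H)$-equivariantly, which is exactly what the argument above consumes. Everything else in the plan is soft: the Hilbert space characterisation, the semidirect-product assembly of affine actions, and the properness bookkeeping are routine.
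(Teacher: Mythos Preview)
Your approach diverges from the paper's and carries a genuine gap at the point you yourself flag as ``the main obstacle.''

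The paper does \emph{not} attempt to build an $H$-equivariant proper affine action of the base $\mathcal W$. Instead it observes that the support map $\operatorname{supp}:\mathcal W\to\{\text{finite subsets of }H\}$ satisfies the gauge axioms of Remarks~\ref{gauge} and~\ref{cuatro} (finite, symmetric, subadditive, vanishing only at $1$, and $H$-equivariant), feeds this into the measured-walls machinery of Cornulier--Stalder--Valette \cite[Theorem~5.1]{CSV2} exactly as in \cite[\S5]{SAS18}, and uses only the hypothesis that finite verbal products $\2W{F}G$ are Haagerup for finite $F\subseteq H$. No layer-by-layer analysis of the cartesian subgroup $C$ is needed; the support alone handles properness.

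Your proposal, by contrast, promises to construct an $H$-equivariant affine action of $\mathcal W$ that is proper along $C$ ``layer by layer \ldots\ exactly as in the second-nilpotent case of \cite{SAS18}.'' But \cite{SAS18} does not do this for the wreath product: its \S5 already uses the CSV gauge argument, and the structural fact that $C$ is central in the $2$-nilpotent case is used only for the \emph{product} of two groups (Proposition~3.1 there), not for the wreath product. So your appeal to \cite{SAS18} does not supply the missing step. Concretely, you need a conditionally negative definite function $\psi_C$ on $C$ that is invariant under the permutation action of $H$ \emph{and} extends (or can be promoted) to a c.n.d.\ function on all of $\mathcal W$; neither the amenability of $C$ nor the filtration you describe gives this for free once $C$ is no longer central, because conjugation by $\mathcal W$ now acts nontrivially on $C$ and an $H$-invariant $\psi_C$ need not be $\mathcal W$-conjugation-compatible. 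You acknowledge this is the crux but do not resolve it. The CSV route avoids the difficulty entirely, which is why the paper can claim the proof is ``exactly the same'' as in \cite{SAS18}.
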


In \cite{MR3632893} Holt and Rees showed that restricted wreath products between a sofic group and an acting residually finite group is sofic. Simultaneously, in a quite technical work, Hayes and Sales relaxed the condition on the acting group and showed that soficity is preserved under taking restricted wreath products of sofic groups, \cite{HAY18}. It is still unknown if extensions of the form sofic-by-sofic are sofic, and few permanence properties of soficity are well understood, hence the importance of \cite{HAY18}.
Then, it seemed pertinent to analyze if a similar result holds true for the group extension given by the restricted second nilpotent wreath product. 
As it was explained above, the techniques presented in  \cite{SAS18} were not suitable to deal with soficity. Hence, the second and main motivation of the work carried on in this article was to study this question. Here we extend the work of \cite{HAY18} to show the following theorem.
\begin{theorem}\label{theo sofic intro}
Let $G$ and $H$ be countable, discrete, sofic groups. Let $W\subseteq F_{\infty}$ be a set of words 
such that its corresponding verbal product preserves soficity. 
Then the restricted verbal wreath product between $G$ and $H$ is sofic. In particular, restricted nilpotent wreath products, restricted solvable wreath products and restricted $k$-Burnside wreath products for $k=2,3,4,6$ between sofic groups are sofic.
\end{theorem}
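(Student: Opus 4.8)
The plan is to follow the strategy of Hayes--Sales \cite{HAY18} for ordinary restricted wreath products, replacing the base direct sum $\bigoplus_{h\in H}G$ by the restricted verbal product $\2W{h\in H}G$ and checking that every step of their argument only uses features that survive this replacement. Write $L:=\2W{h\in H}G$ for the base group of the restricted verbal wreath product $W:=L\rtimes H$; by hypothesis $L$ is sofic, since the verbal product preserves soficity and a soficity argument via direct limits (together with associativity of verbal products, as used for Corollary \ref{amenability of many products}) reduces the countably-indexed verbal product of copies of $G$ to the finitely-indexed case. The key structural facts I would isolate first are: (i) $H$ acts on $L$ by permuting the canonical copies $G_h\cong G$ of the free factors, exactly as it permutes coordinates in the direct sum, because the defining word set $W$ and the cartesian subgroup are invariant under permutations of the index set $\mathcal I=H$; (ii) for any finite subset $F\subseteq H$, the subgroup $L_F:=\2W{h\in F}G$ generated by $\{G_h : h\in F\}$ sits inside $L$ (this is property \eqref{3 intro}--\eqref{5 intro} of verbal products), is $\mathrm{Stab}_H(F)$-invariant in the appropriate sense, and $L=\varinjlim_F L_F$.

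The heart of the Hayes--Sales method is to build, for a given finite $Q\subseteq W$ and $\varepsilon>0$, a sofic approximation of $W$ on a carefully chosen finite set, obtained as a ``twisted'' combination of a sofic approximation of $H$ (on $[n]$) and a sofic approximation of $L$. I would proceed as follows. First, choose a sofic approximation $\sigma\colon H\to \mathrm{Sym}([n])$ good enough on the finite piece of $H$ that appears in $Q$. Second, the elements of $L$ occurring in $Q$ are supported on some finite $F\subseteq H$, so they live in $L_F$; pick a sofic approximation $\rho\colon L_F\to\mathrm{Sym}([m])$ that is accurate on the relevant finite set. Third, form the approximation of $W$ on a set of the form $[m]^{[n]}\times[n]$ (or a suitable sub-object of it), where $H$ acts through $\sigma$ on the $[n]$ coordinate and simultaneously shuffles the $[m]^{[n]}$-coordinates, while the copy $G_h$ of $G$ acts on the $h$-th block through $\rho$ restricted to $G_h\subseteq L_F$. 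The verification that this is an $(\varepsilon,Q)$-sofic approximation of $W$ splits into: multiplicativity on pairs from $H$ (uses soficity of $\sigma$), multiplicativity on pairs from $L$ (uses soficity of $\rho$ together with the fact that distinct free factors $G_h,G_{h'}$ with $h\neq h'$ commute in $L$ modulo the word, so their approximating permutations act on disjoint blocks and hence commute up to small error), the crossed relations $h g_h h^{-1} = g_{h\cdot}$ (uses the equivariance in (i)), and the trace/normalization estimate $\mathrm{tr}(\pi(w))\approx 0$ for $w\neq e$ (a separate case analysis according to whether $w$ has nontrivial $H$-part or lies in $L_F\setminus\{e\}$).

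The main obstacle I anticipate is precisely the point where \cite{HAY18} had to work hard: controlling the approximation of $L$ restricted to the ``active'' coordinates indexed by the $\sigma$-orbits on $[n]$, since a single $\rho\colon L_F\to\mathrm{Sym}([m])$ does not interact cleanly with the way $\sigma$ moves points, and one must instead use many independent copies of $\rho$ indexed by $[n]$ and argue that the relations of $L$ are still approximately satisfied after the $H$-shuffle mixes these copies. In the direct-sum case this is manageable because the base is a (restricted) direct product and the coordinate copies are literally independent; in the verbal case the copies $G_h$ are \emph{not} independent — they satisfy the relations imposed by $W(\mathcal F)\cap[G_i]^{\mathcal F}$ — so one needs that the only such relations involving finitely many factors are themselves captured inside $L_F=\2W{h\in F}G$ for $F$ large enough, and that enlarging $F$ (hence $m$) does not destroy the estimates. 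This is exactly where soficity's closure under direct limits and the finite-support nature of the restricted verbal wreath product are used: every relation tested by $Q$ lives in some $W_{F_0}$ with $F_0$ finite, so choosing $F\supseteq F_0$ and $\rho$ accurate on the finitely many relevant products suffices. Once this is set up, the remaining estimates are the same $\varepsilon$-bookkeeping as in \cite{HAY18}, and the ``in particular'' clause follows by invoking Theorems \ref{theorem 1 intro}, \ref{theorem 3 intro} and \ref{theorem 3B intro} to supply the hypothesis that the relevant verbal products preserve soficity.
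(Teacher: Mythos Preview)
Your high-level plan (adapt Hayes--Sales, use soficity of the base via the hypothesis and Corollary~\ref{amenability of many products}, verify the four conditions of Lemma~\ref{cuasimultiplicativa}) matches the paper. But the concrete construction you sketch for the base approximation has a genuine gap, and the paper resolves it differently.

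The problem is your sentence ``distinct free factors $G_h,G_{h'}$ with $h\neq h'$ commute in $L$ modulo the word, so their approximating permutations act on disjoint blocks and hence commute up to small error.'' For any verbal product beyond the direct sum this is false: in $A\vW B$ the cartesian part $[A,B]^w$ is in general nontrivial (e.g.\ $\Z\stackrel{_2}{\ast}\Z$ is the discrete Heisenberg group). Consequently your scheme of taking a sofic approximation $\rho:L_F\to\mathrm{Sym}([m])$ and then planting \emph{independent} copies of $\rho$ indexed by $[n]$ cannot work: independent copies force the images of $G_h$ and $G_{h'}$ to commute exactly, so the approximation kills every nontrivial element of $[G_h,G_{h'}]^w$ and is never $(F,\varepsilon)$-free on the cartesian part. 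The paper discusses exactly this obstruction just before the proof, and gives an explicit example ($\Z/p\Z\stackrel{_2}{\ast}\Z/p\Z$ versus $\mathrm{Sym}(A)\stackrel{_2}{\ast}\mathrm{Sym}(A)$) showing that coordinate-wise constructions fail.

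The paper's fix is the point you are missing. One does \emph{not} approximate $L_F=\2W{F}G$ for $F\subseteq H$. Instead, after fixing the sofic approximation $\sigma:H\to\mathrm{Sym}(B)$, one takes a sofic approximation $\varphi$ of the verbal product $\2W{B}G$ indexed by the \emph{target set} $B$ of $\sigma$ (this exists by the hypothesis plus Corollary~\ref{amenability of many products}). The link between the two approximations is provided by a family of injective homomorphisms $\theta_b:\2W{E}G\to\2W{B}G$, defined for $b$ in a large ``good'' subset $B_E\subseteq B$, which send the $h$-th copy of $G$ to the $\sigma(h)^{-1}b$-th copy. These $\theta_b$ are genuine group homomorphisms on the finitely-indexed verbal product, so they automatically respect all cartesian relations; and for good $b$ one has $\theta_{\sigma(h)b}(\alpha_h(x))=\theta_b(x)$, which is exactly the compatibility needed to verify item~\ref{item4} of Lemma~\ref{cuasimultiplicativa}. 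Your proposal never produces an analogue of $\theta_b$, and without it the $H$-equivariance and the preservation of the verbal relations cannot be checked simultaneously.
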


Finally, as in \cite{HAY18}, we show the next variant, valid for other notions of metric approximation in groups. 

\begin{theorem}\label{theo hyper intro}
Let $G$ be a countable, discrete, hyperlinear, linear sofic or weakly sofic group and $H$ be a countable, discrete, sofic group. Let $W\subseteq F_{\infty}$ be a set of words 
such that its corresponding verbal product preserves hyperlinearity, linear soficity or weak soficity respectively. 
Then the restricted verbal wreath product between $G$ and $H$, when the acting group is $H$,  is hyperlinear, linear sofic or weakly sofic respectively. In particular, restricted nilpotent wreath products, restricted solvable wreath products and restricted $k$-Burnside wreath products for $k=2,3,4,6$ between an hyperlinear, linear sofic or weakly sofic group
 and an acting sofic group are hyperlinear, linear sofic or weakly sofic respectively.
 \end{theorem}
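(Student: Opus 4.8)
The plan is to reduce the statement to the soficity result of Theorem~\ref{theo sofic intro} by working uniformly with the relevant metric approximations. First I would recall the general framework: each of the three target properties (hyperlinearity, linear soficity, weak soficity) is an instance of ``$\mathcal C$-approximability'' for an appropriate family of groups $\mathcal C$ equipped with bi-invariant metrics---$\mathcal C = \{U(n)\}$ with the normalized Hilbert--Schmidt metric for hyperlinearity, $\mathcal C = \{\mathrm{GL}(n,\mathbb C)\}$ with the normalized rank metric for linear soficity, and $\mathcal C$ the class of finite groups with an arbitrary invariant metric for weak soficity. In all three cases one has, just as for soficity, the basic package: a group is $\mathcal C$-approximable iff for every finite subset and every $\varepsilon$ there is an almost-homomorphism into some member of $\mathcal C$ that is almost injective in the metric. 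I would state this once and then run the argument for all three cases in parallel, pointing to the uniform treatment of such approximations in \cite{HAY18}.

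Next I would set up the structure exactly as in the proof of Theorem~\ref{theo sofic intro}. Write $L$ for the restricted verbal wreath product of $G$ by $H$ with respect to $W$; it sits in a short exact sequence $1 \to B \to L \to H \to 1$, where the base group $B$ is the restricted verbal product $\vW_{h \in H} G_h$ of copies of $G$ indexed by $H$, with $H$ acting by permuting the coordinates. By hypothesis $W$ preserves the property in question, so $B$ itself is hyperlinear (resp.\ linear sofic, resp.\ weak sofic): indeed $B$ is a verbal product of countably many copies of $G$, and by associativity of verbal products together with Corollary~\ref{amenability of many products} (in its hyperlinear / linear sofic / weak sofic form) this follows from the property of a single copy of $G$. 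Meanwhile $H$ is sofic. The key point that makes the wreath construction work is the same combinatorial input used in \cite{HAY18} and in Theorem~\ref{theo sofic intro}: given a sofic approximation of $H$ on a finite set $F$, one builds a model for $L$ on a set of the form $F \times (\text{finite piece of the base})$, using a Følner-type / partial-action argument to glue together finitely many coordinate copies of an approximation of $B$, permuted according to the $H$-model. Because $H$ is approximated by \emph{finite} groups (soficity), only finitely many coordinates are ever activated, so one only needs an approximation of a verbal product of finitely many copies of $G$, which one gets from the assumed approximation of $G$ together with the permanence hypothesis on $W$.

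The heart of the matter, and the step I expect to be the main obstacle, is verifying that the asymmetry between the acting group and the base group is harmless: in \cite{HAY18} both factors are sofic, whereas here the base is only approximated by the (possibly infinite) groups in $\mathcal C$ while the acting group must remain sofic. I would argue that the Hayes--Sales construction never uses finiteness of the base-group models---it uses finiteness of the \emph{acting} group's models to control the support---so one may feed in $\mathcal C$-valued almost-morphisms of $B$ verbatim, and the resulting model of $L$ lands in a member of $\mathcal C$ built from the $H$-model and the $B$-model (a suitable ``wreath-type'' group $\bigl(\text{base model}\bigr)^{F} \rtimes \mathrm{Sym}(F)$, which lies in $\mathcal C$ or is $\mathcal C$-approximable because $\mathcal C$ is closed under the relevant operations---direct powers, semidirect products by finite groups, and finite approximation). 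Checking that the metric estimates---almost-multiplicativity and the lower bound on the metric distance between images of distinct elements of $L$---survive this substitution, with the same bookkeeping over the finitely many active coordinates, is the genuinely technical part; everything else is a transcription of the soficity proof. Once these estimates are in place, the ``in particular'' clause is immediate from Theorems~\ref{theorem 1 intro}, \ref{theorem 3 intro} and~\ref{theorem 3B intro}, which guarantee that nilpotent, solvable and $k$-Burnside products ($k=2,3,4,6$) preserve hyperlinearity, linear soficity and weak soficity, so the hypothesis on $W$ is satisfied in each of those cases.
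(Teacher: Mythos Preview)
Your proposal is correct and follows essentially the same strategy as the paper: reuse the entire combinatorial scaffolding of the sofic proof (the sets $E_1,E_2,E,E_H,B,B_E$, the maps $\theta_b$ and $\Theta$, and Lemma~\ref{cuasimultiplicativa}), feed in a $\mathcal C$-valued approximation of the base $\2W{B}G$ supplied by Corollary~\ref{amenability of many products}, and verify the four multiplicativity items plus freeness in the relevant metric. The only point the paper makes more explicit than your sketch is how the target wreath-type group is pushed back into $\mathcal C$ in each case---via the concrete embedding $\psi:\mathcal U(\mathcal H)\wr_B Sym(B)\hookrightarrow \mathcal U\big(\bigoplus_B\mathcal H\big)$ for hyperlinearity (and its $\mathrm{GL}_n(K)$ analogue for linear soficity), and via the bi-invariant metric \eqref{metric in permut. wreath product} on the finite group $A\wr_B Sym(B)$ for weak soficity---rather than appealing abstractly to closure properties of $\mathcal C$.
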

A straightforward application of the Shmelkin embedding together with Theorem \ref{theo2} and Theorem \ref{theo sofic intro} will yield the following result.
\begin{corollary}\label{coro Shmelkin}Let $G$ be a normal subgroup of a free group $F$. 
If $F/G$ is sofic or Haagerup, then
$F/G_k$, $F/G^{(k)}$ are sofic or Haagerup  respectively, for all $k\in\N$, where $G_k$ and $G^{(k)}$ denote the $k^{th}$ term in the lower central series and  the $k^{th}$-term in the derived series.  Moreover if $k=2,3,4,6$ then $F/G^{k}$ is sofic or Haagerup respectively, where $G^{k}$ is the $k$-Burnside subgroup of $G$.\end{corollary}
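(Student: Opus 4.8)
The plan is to obtain the corollary as a formal consequence of the Shmelkin embedding together with Theorems~\ref{theo2} and~\ref{theo sofic intro}, using only that soficity and the Haagerup property are inherited by subgroups. First I would record the Shmelkin embedding \cite{MR0193131} in the shape needed here: if $\mathfrak{V}$ is a variety of groups with associated set of words $W$ and $G\trianglelefteq F$ with $F$ free of rank $d$, then $F/W(G)$ embeds into the restricted verbal wreath product $\bar F_d(\mathfrak{V})\wrw (F/G)$ attached to $W$ with acting group $F/G$ (as defined in Section~\S\ref{section 5}), where $\bar F_d(\mathfrak{V})$ is the relatively free group of $\mathfrak{V}$ on $d$ generators. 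Here the rank of the base group is that of $F$, not of $F/G$; this is one convention it is essential to get right.

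Next I would instantiate $\mathfrak{V}$ in the three cases of interest and identify $W(G)$, the relevant verbal product, and the base group. For $\mathfrak{V}=\mathfrak{N}_{k-1}$, the variety of nilpotent groups of class $\le k-1$, one has $W(G)=G_k$, the associated verbal product is the $(k-1)$-nilpotent product, and $\bar F_d(\mathfrak{N}_{k-1})$ is nilpotent. For $\mathfrak{V}=\mathfrak{S}_k$, the variety of solvable groups of derived length $\le k$, one has $W(G)=G^{(k)}$, the associated verbal product is the solvable product, and $\bar F_d(\mathfrak{S}_k)$ is solvable. For $\mathfrak{V}=\mathfrak{B}_n$, the variety of exponent $n$ with $n\in\{2,3,4,6\}$, one has $W(G)=G^n$, the associated verbal product is the $n$-Burnside product, and $\bar F_d(\mathfrak{B}_n)$ is locally finite by the classical local finiteness theorems for these exponents. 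In every case the base group is amenable, hence sofic and with the Haagerup property.

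It then suffices to feed these data into the wreath-product theorems. Since $(k-1)$-nilpotent products preserve soficity and the Haagerup property by Theorem~\ref{theorem 1 intro}, solvable products do by Theorem~\ref{theorem 3 intro}, and $n$-Burnside products for $n\in\{2,3,4,6\}$ do by Theorem~\ref{theorem 3B intro}, and since $F/G$ is sofic (resp. has the Haagerup property) by hypothesis while the base groups above are amenable, Theorem~\ref{theo sofic intro} (resp. Theorem~\ref{theo2}) gives that $\bar F_d(\mathfrak{V})\wrw (F/G)$ is sofic (resp. has the Haagerup property) in each case. Restricting along the Shmelkin embedding to the subgroup $F/W(G)$ then yields the stated conclusions for $F/G_k$, $F/G^{(k)}$ and $F/G^n$.

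The delicate part is bookkeeping rather than a genuine obstacle: besides the rank convention above, one must correctly match the indexing of nilpotent class and derived length against the terms $G_k$ and $G^{(k)}$, keep the groups countable so that the cited theorems apply, and confirm that the available form of the Shmelkin embedding lands in the \emph{restricted} verbal wreath product exactly as in Section~\S\ref{section 5}. One real caveat concerns the Burnside case, where Theorem~\ref{theorem 3B intro} is stated for finitely generated groups: for $F$ of infinite rank one should either restrict to finitely generated $F$ there, or check that the local finiteness of $\bar F_d(\mathfrak{B}_n)$ still suffices for the argument of Theorem~\ref{theo sofic intro}. Everything else is purely formal.
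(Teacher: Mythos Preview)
Your proposal is correct and follows essentially the same approach as the paper: invoke the Shmelkin embedding $F/W(G)\hookrightarrow (F/W(F))\wrw(F/G)$, observe that in each of the three cases the base $F/W(F)$ is amenable, apply Theorem~\ref{theo sofic intro} (resp.\ Theorem~\ref{theo2}), and pass to the subgroup. Your bookkeeping is in fact more careful than the paper's---you track the index shift between $n_{k-1}$ and $G_k$ explicitly, and you correctly flag the finite-generation hypothesis in Theorem~\ref{theorem 3B intro} for the Burnside case, which the paper's own proof glosses over by tacitly assuming $F$ has finite rank (your suggested fix via local finiteness of $B(d,n)$ for $n\in\{2,3,4,6\}$ is the right way to handle infinite rank).
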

The case $F/G^\prime$ (and hence the cases $F/G^{(k)}$) has been already noticed by Hayes and Sale in \cite[Corollary 1.2]{HAY18}, and it is a consequence of their main result combined with the Magnus embedding. They also observed that it could have been deduced from an earlier theorem of P\u{a}unescu, \cite[Corollary 3.8]{MR2826401}.
To the best of our knowledge, Corollary \ref{coro Shmelkin} is the first application of the Shmelkin embedding in the context of dynamics of group actions. 

It is our hope that the work presented in this article will stimulate the study of some old beautiful constructions and results from varieties of groups as interesting objects to analyze also in the framework of geometric group theory and dynamics of group actions.

\section{Verbal subgroups and verbal products of two groups}\label{section 2}
In this section we record several properties of verbal subgroups and of verbal products of two groups. Some of the results presented here can be found scattered, at least implicitly, in \cite{GOL47,GOL56NILP,NEU69,MOR56,MOR58} and references therein. Since some of them are somewhat hard to trace and some might be obscured by lack of modern terminology, we felt compelled to present a  unified  and more or less detailed account.

\begin{convention}
In this article we adopt the convention \begin{equation*}[a,b]:=aba^{-1}b^{-1}.\end{equation*}
\end{convention}

Let $F_\infty$ be the free group on countable many letters $\{x_i\}_{i\in \mathbb{N}}$. Its elements will be called {\it words}. 
If $w\in F_\infty$, the length of $w$ will be denoted by $\ell(w)$.  A word in $n$-letters is an element of $F_\infty$ that requires at most $n$ distinct letters to be written in reduced form.  A word in $n$ letters will be noted as $w(x_{i_{1}},\dots,x_{i_{n}})$ or as $w({\bf x})$ with ${\bf x}=(x_{i_{1}},\dots,x_{i_{n}})\in F_{\infty}^{n}$.

Given a group $G$, a word in $n$ letters $w(\bf{x})$, and an element ${\bf g}=(g_1,\dots,g_n)$ in $G^{n}$, the evaluation of $w$ in ${\bf g}$ is the element 
 $w({\bf g}):=w(g_1,\dots,g_n) \in G$. The evaluation of $w$ by the elements of $G$ is the set $w(G)=\{ w({\bf g}): {\bf g} \in G^{n}\}$.\\

We recall the definition of verbal subgroup, considered first by B. H. Neumann  in \cite[$\S$5]{NEU37}.
\begin{definition}
  Let $G$ be a group. Let  $W\subseteq F_{\infty}$ be a nonempty set of words. The verbal subgroup $W(G)$ is defined as the subgroup of $G$ generated by the evaluation of all the elements of $W$ by the elements of $G$. 
   \end{definition}
   
 Observe that $S_{\rm {fin}}$, the group of finite permutations of $\{x_i\}_{i\in \mathbb{N}}$, acts on $F_{\infty}$ and if $\sigma\in S_{\rm {fin}}$, 
 then   $\sigma.w(G)=w(G)$ for all $w\in F_\infty$. 
 This implies that  for the purpose of verbal subgroups,
a word in $n$ letters can be assumed to be of the form 
 $ w({\bf x})=w(x_{1},\dots,x_{n})$.

\begin{lemma}
\label{fullyinvarianceforwords}
Let $\varphi: G \to H$ be a homomorphism of groups. Let $w \in F_{\infty}$ be a word in  $n$ letters and let ${\bf g} \in G^{n}$. Then 
$ \varphi(w({\bf g}))=w(\varphi({\bf g}))$. Hence, if $W\subseteq F_{\infty}$ is a set of words, then $\varphi(W(G))\subseteq W(H)$. 
Moreover, if $\varphi$ is surjective,  then $\varphi(W(G))=W(H)$.
 \end{lemma}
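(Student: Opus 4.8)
The plan is to reduce everything to the universal property of the free group. First I would record the observation that for a fixed tuple ${\bf g}=(g_1,\dots,g_n)\in G^n$ there is a unique homomorphism $\psi_{\bf g}\colon F_n\to G$ with $\psi_{\bf g}(x_i)=g_i$ for $1\le i\le n$, and that, by construction of the evaluation map, $w({\bf g})=\psi_{\bf g}(w)$ for every word $w$ that can be written using only the letters $x_1,\dots,x_n$ (by the remark following the definition of verbal subgroup we may always arrange this). With this in hand, the identity $\varphi(w({\bf g}))=w(\varphi({\bf g}))$ becomes the statement that $\varphi\circ\psi_{\bf g}=\psi_{\varphi({\bf g})}$ as maps $F_n\to H$: both are homomorphisms out of the free group $F_n$, and they agree on the free generators, since $(\varphi\circ\psi_{\bf g})(x_i)=\varphi(g_i)=\psi_{\varphi({\bf g})}(x_i)$; hence they coincide, and evaluating at $w$ yields the claim. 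Alternatively, one argues directly by induction on $\ell(w)$, peeling off one letter at a time and using that $\varphi$ is multiplicative and respects inverses; I would only sketch this variant.

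For the inclusion $\varphi(W(G))\subseteq W(H)$, recall that $W(G)$ is by definition generated by $\{\,w({\bf g}): w\in W,\ {\bf g}\in G^{n_w}\,\}$, where $n_w$ is the number of letters of $w$. Since the image of a subgroup under a homomorphism is generated by the image of any generating set, $\varphi(W(G))$ is generated by $\{\varphi(w({\bf g}))\}=\{w(\varphi({\bf g}))\}$ by the first part. Each element $w(\varphi({\bf g}))$ is the evaluation of $w\in W$ at the tuple $\varphi({\bf g})\in H^{n_w}$, hence lies in $W(H)$; as $W(H)$ is a subgroup containing all the generators of $\varphi(W(G))$, we conclude $\varphi(W(G))\subseteq W(H)$.

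Finally, suppose $\varphi$ is surjective. Given a generator $w({\bf h})$ of $W(H)$ with ${\bf h}=(h_1,\dots,h_{n_w})\in H^{n_w}$, surjectivity lets us pick preimages $g_i\in G$ with $\varphi(g_i)=h_i$; setting ${\bf g}=(g_1,\dots,g_{n_w})$ we get $w({\bf h})=w(\varphi({\bf g}))=\varphi(w({\bf g}))\in\varphi(W(G))$ by the first part. Thus every generator of $W(H)$ lies in the subgroup $\varphi(W(G))$, so $W(H)\subseteq\varphi(W(G))$, and together with the previous paragraph this gives equality. I do not expect a genuine obstacle; the only points needing care are tracking the word-dependent number of letters and invoking the universal property of $F_n$ correctly, which is precisely why the first identity is the crux of the argument.
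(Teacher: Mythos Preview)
Your argument is correct and complete. The paper's own proof is a single sentence (``This is simply because $\varphi$ is a group homomorphism''), so your write-up is a detailed expansion of exactly that idea; the universal-property framing you use is just a clean way of packaging the fact that evaluation commutes with homomorphisms.
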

 \begin{proof} This is simply because $\varphi$ is a group homomorphism.
 \end{proof}
  Recall that if $H$ is a subgroup of a group $G$, $H$ is said to be {\it fully invariant} in $G$ if for every endomorphism $\varphi:G\to G$, $\varphi(H) \subseteq H$.
  We can now state an easy corollary to Lemma \ref{fullyinvarianceforwords} that will be used often in the remaining of this section.
\begin{corollary}\label{fully invariance of verbal subgroups}
Let $G$ be a group. Let $W\subseteq F_{\infty}$ be a set of words.
The verbal subgroup $W(G)$ is fully invariant in $G$. In particular, $W(G)$ is normal in $G$.
\end{corollary}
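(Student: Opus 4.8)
The plan is to deduce everything directly from Lemma \ref{fullyinvarianceforwords} by specializing it to endomorphisms of $G$ into itself. First I would take an arbitrary endomorphism $\varphi\colon G\to G$ and apply Lemma \ref{fullyinvarianceforwords} with $H=G$: since $\varphi$ is in particular a group homomorphism, the lemma gives $\varphi(W(G))\subseteq W(G)$. As $\varphi$ was an arbitrary endomorphism, this is exactly the assertion that $W(G)$ is fully invariant in $G$.

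For the ``in particular'' clause, I would observe that for each fixed $g\in G$ the conjugation map $c_g\colon x\mapsto gxg^{-1}$ is an automorphism of $G$, hence an endomorphism, so the full invariance just established yields $gW(G)g^{-1}=c_g(W(G))\subseteq W(G)$ for every $g\in G$; this is the definition of $W(G)$ being normal in $G$. (One could equally note that $W(G)$ being a characteristic subgroup already implies normality, but the conjugation argument is the most self-contained.)

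There is essentially no obstacle here: the only content is the observation that an endomorphism of $G$ is a special case of the homomorphisms to which Lemma \ref{fullyinvarianceforwords} applies, and that inner automorphisms are endomorphisms. The statement is recorded separately only because it will be invoked repeatedly in the sequel, so I would keep the write-up to a couple of sentences rather than belaboring it.
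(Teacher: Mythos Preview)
Your proposal is correct and matches the paper's approach exactly: the paper states this as an immediate corollary of Lemma~\ref{fullyinvarianceforwords} without further proof, and your argument---specialize the lemma to endomorphisms $\varphi:G\to G$, then specialize further to inner automorphisms for normality---is precisely the intended deduction. There is nothing to add.
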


\begin{definition}\cite[pg. 6]{NEU69}
A set of words $W\subseteq F_{\infty}$ is  called closed if it is a fully invariant subgroup of $F_\infty$. 
The closure of $W$ is the smallest closed subgroup of $F_\infty$ that contains $W$. It is denoted by $\mathcal{C}(W)$.
\end{definition}

For the proof of the next lemma, see, for instance, \cite[12.52]{NEU69}.
\begin{lemma}\label{single w}
Let $G$ be a group. Let $W\subseteq F_{\infty}$ be a set of words. Then $\mathcal {C}(W)(G)=W(G)$. In particular, each  element of $W(G)$ can be regarded as the evaluation of a single word  $w\in \mathcal {C}(W)$ of length $n$ in an element $\bf g$ of $G^n$, for some $n\in\N$.
\end{lemma}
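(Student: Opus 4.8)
The plan is to derive both assertions from the definition of the closure together with Lemma~\ref{fullyinvarianceforwords} and Corollary~\ref{fully invariance of verbal subgroups}, the only slightly delicate point being the relabelling bookkeeping at the end. First I would record the identification $\mathcal C(W)=W(F_\infty)$. By Corollary~\ref{fully invariance of verbal subgroups}, $W(F_\infty)$ is a fully invariant subgroup of $F_\infty$ and it plainly contains $W$, so $\mathcal C(W)\subseteq W(F_\infty)$. Conversely, $W(F_\infty)$ is generated by the elements $w(\mathbf f)$ with $w\in W$ and $\mathbf f=(f_1,\dots,f_n)$ a tuple over $F_\infty$; taking $\varphi$ to be the endomorphism of $F_\infty$ with $\varphi(x_i)=f_i$, Lemma~\ref{fullyinvarianceforwords} gives $w(\mathbf f)=\varphi(w)$, which lies in $\mathcal C(W)$ because $\mathcal C(W)$ is fully invariant and contains $w$. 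Hence $W(F_\infty)\subseteq\mathcal C(W)$, and the two coincide.

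Next I would prove $\mathcal C(W)(G)=W(G)$. The inclusion ``$\supseteq$'' is immediate from $W\subseteq\mathcal C(W)$. For ``$\subseteq$'' it suffices to check that $v(G)\subseteq W(G)$ for every $v\in\mathcal C(W)=W(F_\infty)$. Such a $v$ is a word in finitely many letters $x_1,\dots,x_m$ and can be written $v=\prod_{s}w_s(\mathbf f_s)^{\varepsilon_s}$ with $w_s\in W$, $\varepsilon_s=\pm1$ and $\mathbf f_s$ tuples of words in $x_1,\dots,x_m$. Given $\mathbf g=(g_1,\dots,g_m)\in G^{m}$, let $\psi\colon F_\infty\to G$ be the homomorphism with $\psi(x_i)=g_i$; by Lemma~\ref{fullyinvarianceforwords}, $v(\mathbf g)=\psi(v)=\prod_s w_s\big(\psi(\mathbf f_s)\big)^{\varepsilon_s}$, and each factor lies in $W(G)$ since $\psi(\mathbf f_s)$ is a tuple over $G$ and $w_s\in W$. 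Thus $v(\mathbf g)\in W(G)$, as wanted.

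Finally, for the ``in particular'' clause, I would take $h\in W(G)=\mathcal C(W)(G)$ and write $h=\prod_{s=1}^{r}v_s(\mathbf g_s)^{\varepsilon_s}$ with $v_s\in\mathcal C(W)$, $\varepsilon_s=\pm1$ and $\mathbf g_s\in G^{n_s}$. Since $F_\infty$ has infinitely many letters, one may relabel the finitely many variables occurring in the $v_s$ so that distinct factors use disjoint blocks of variables; this is realised by automorphisms of $F_\infty$, under which the fully invariant subgroup $\mathcal C(W)$ is stable, so it replaces each $v_s$ by a word $u_s\in\mathcal C(W)$ with $u_s(\mathbf g_s)=v_s(\mathbf g_s)$. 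Then $w:=u_1^{\varepsilon_1}\cdots u_r^{\varepsilon_r}$ lies in $\mathcal C(W)$ — a subgroup, hence closed under products and inverses — and evaluating $w$ on the concatenation $\mathbf g$ of the tuples $\mathbf g_s$ gives $w(\mathbf g)=\prod_s u_s(\mathbf g_s)^{\varepsilon_s}=h$. As $w$ has length $n:=\ell(w)$ it involves at most $n$ letters, so it may be regarded as a word in $n$ letters evaluated at an element of $G^{n}$, which is exactly the asserted conclusion. The main obstacle, such as it is, is this last relabelling step together with keeping straight the distinction between the length of a word and its number of letters; everything else is a formal consequence of Lemma~\ref{fullyinvarianceforwords} and the definition of $\mathcal C(W)$, and indeed the statement is essentially \cite[12.52]{NEU69}.
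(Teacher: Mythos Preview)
Your proof is correct. The paper itself gives no argument for this lemma, referring instead to \cite[12.52]{NEU69}; you have supplied a complete and standard proof in its place. The identification $\mathcal C(W)=W(F_\infty)$ via Corollary~\ref{fully invariance of verbal subgroups} and Lemma~\ref{fullyinvarianceforwords}, followed by the relabelling trick to collapse a product of evaluations into a single word, is precisely the intended approach and matches what one finds in Neumann. The only cosmetic point is that in the last paragraph the tuple $\mathbf g_s$ attached to $u_s$ should, strictly speaking, be the re-indexed version of the original tuple, but your meaning is clear and the concatenated $\mathbf g$ is unambiguous.
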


Let $A$ and $B$ be groups and let $A\ast B$ be the free product of them. There exists natural homomorphisms $\pi_A:A\ast B\to A$ and $\pi_B:A\ast B\to B$. By abuse of notation,  the homomorphisms 
$(\pi_{A})^{n}:(A\ast B)^{n}\to A^{n}$ and $(\pi_{B})^{n}:(A\ast B)^{n}\to A^{n}$ will also be denoted by $\pi_A$ and $\pi_B$.
\begin{definition}
Let $A$ and $B$ be groups. $[A,B]$ is the subgroup of $A\ast B$ generated by the elements of the form $[a,b]$ with $a\in A$ and $b\in B$.
\end{definition}

\begin{lemma}\label{ABC}
Let $A$ and $B$ be groups. Let  $W=\{w\}\subseteq F_{\infty}$, with $w$ a word in $n$ letters and let  $w({\bf g})\in W(A\ast B)$ be its evaluation in  ${\bf g}\in (A \ast B)^{n}$.
Then, there exists  $u \in W(A\ast B)\cap [A,B]$ such that
\begin{equation*} 
    w({\bf g}) = w(\pi_{A}({\bf g}))w(\pi_{B}({\bf g})) u
\end{equation*}
and this decomposition is unique. More precisely,  if  $  w({\bf g})=abc$, with $a\in A$, $b\in B$ and $c\in[A,B]$;
then $a= w(\pi_{A}({\bf g}))$, $b=w(\pi_{B}({\bf g}))$ and $c= u$.
\end{lemma}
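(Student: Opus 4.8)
The plan is to use the normal form for elements of a free product together with the fact, recorded in Lemma~\ref{single w}, that it suffices to treat a single word $w$, which we may take to be an element of the closed set $\mathcal C(W)$ of some length $n$. The existence half of the statement is an induction on the length $\ell(w)$ of $w$ as a reduced word in $F_{\infty}$; the uniqueness half is a consequence of the structure of the free product $A\ast B$ together with the observation that $[A,B]$ is precisely the kernel of the canonical map $A\ast B\to A\times B$.

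First I would establish uniqueness, which is the cleaner part. Let $\rho\colon A\ast B\to A\times B$ be the homomorphism induced by $\pi_A$ and $\pi_B$; its kernel is exactly the normal closure of $\{[a,b]:a\in A,b\in B\}$, which one checks coincides with the subgroup $[A,B]$ generated by these commutators (it is already normal, being the kernel). Now if $w({\bf g})=abc$ with $a\in A$, $b\in B$, $c\in[A,B]$, then applying $\rho$ gives $\rho(w({\bf g}))=(a,b)$; but $\rho(w({\bf g}))=(w(\pi_A({\bf g})),w(\pi_B({\bf g})))$ by Lemma~\ref{fullyinvarianceforwords} applied to $\rho$. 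Hence $a=w(\pi_A({\bf g}))$ and $b=w(\pi_B({\bf g}))$ are forced, and then $c$ is forced as well. This simultaneously shows that any two decompositions of the required shape agree, and it identifies what $a$, $b$, $c$ must be.

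For existence I would argue by induction on $\ell(w)$. If $\ell(w)\le 1$ the claim is immediate ($w=x_i$ or $w=x_i^{-1}$, and $w({\bf g})$ already lies in $A$ or in $B$, with the other two factors trivial). For the inductive step, write $w=x_i^{\varepsilon}w'$ with $\varepsilon=\pm1$ and $\ell(w')=\ell(w)-1$; set $g:=g_i^{\varepsilon}\in A\ast B$ so that $w({\bf g})=g\,w'({\bf g})$. By the inductive hypothesis, $w'({\bf g})=w'(\pi_A({\bf g}))\,w'(\pi_B({\bf g}))\,u'$ with $u'\in [A,B]$. Write $g=g^A g^B c_g$ with $g^A=\pi_A(g)$, $g^B=\pi_B(g)$ and $c_g\in[A,B]$ (trivially, since $g$ lies in a single factor, two of these are trivial; but phrasing it this way makes the bookkeeping uniform). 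The point is then to move the $A$-part and $B$-part of $g$ past the $B$-part of $w'({\bf g})$ at the cost of a commutator: for $a\in A$, $b\in B$ one has $b a = a b [a,b]^{-1}$ (with our sign convention $[a,b]=aba^{-1}b^{-1}$, so $ba=a^{-1}\cdot[a,b]^{-1}\cdot ab\cdot$... ) — I would do this small rearrangement carefully, using that $[A,B]$ is normal in $A\ast B$ so that conjugating the accumulated commutator word by elements of $A\ast B$ keeps it inside $[A,B]$. Collecting the $A$-letters on the left and the $B$-letters in the middle, one obtains $w({\bf g})=\big(g^A w'(\pi_A({\bf g}))\big)\big(g^B w'(\pi_B({\bf g}))\big)\,u$ for some $u\in[A,B]$; and since $g^A w'(\pi_A({\bf g}))=w(\pi_A({\bf g}))$ and similarly for $B$ (because $\pi_A,\pi_B$ are homomorphisms, so they commute with the word operation, Lemma~\ref{fullyinvarianceforwords} again), this is the desired decomposition. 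Finally, $u\in W(A\ast B)$: indeed $u=w(\pi_B({\bf g}))^{-1}w(\pi_A({\bf g}))^{-1}w({\bf g})$, and each of the three factors lies in $W(A\ast B)$ — the first two because $W(A\ast B)$ is fully invariant (Corollary~\ref{fully invariance of verbal subgroups}) and $\pi_A,\pi_B$ restrict to endomorphisms of $A\ast B$ — so $u\in W(A\ast B)\cap[A,B]$ as claimed.

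The main obstacle I anticipate is the bookkeeping in the inductive step: keeping track of exactly which commutators are generated when one shuffles a single generator $g_i^{\varepsilon}$ of one free factor past a product of the form $w'(\pi_A({\bf g}))w'(\pi_B({\bf g}))$, and verifying that every such commutator, together with its conjugates that arise from further shuffling, stays inside the subgroup $[A,B]$. Normality of $[A,B]$ in $A\ast B$ is what makes this go through, and it is worth isolating as a preliminary remark; with that in hand the induction is routine but needs to be written with some care about the ordering of factors.
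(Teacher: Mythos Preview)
Your overall strategy matches the paper's---induct on $\ell(w)$, shuffle factors past one another using commutator identities, and deduce $u\in W(A\ast B)$ from the formula $u=w(\pi_B({\bf g}))^{-1}w(\pi_A({\bf g}))^{-1}w({\bf g})$---but your base case contains a genuine error. You write that for $w=x_i^{\pm1}$ the element $w({\bf g})=g_i^{\pm1}$ ``already lies in $A$ or in $B$''; this is false, since ${\bf g}\in(A\ast B)^n$ means each $g_i$ is an \emph{arbitrary} element of the free product, not a single syllable. The same confusion reappears in your inductive step, where you assert that $g=g_i^{\varepsilon}$ ``lies in a single factor''. The paper's base case is exactly the nontrivial statement that every $g\in A\ast B$ decomposes as $\pi_A(g)\,\pi_B(g)\,c$ with $c\in[A,B]$, proved there by iterating $ba=ab[b^{-1},a^{-1}]$.

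The fix, however, is already implicit in your uniqueness argument. You observed that $[A,B]=\ker\rho$ for $\rho\colon A\ast B\to A\times B$; this immediately gives the decomposition for \emph{any} $g\in A\ast B$, because $\rho(g)=\rho\bigl(\pi_A(g)\pi_B(g)\bigr)$ forces $\bigl(\pi_A(g)\pi_B(g)\bigr)^{-1}g\in\ker\rho=[A,B]$. In fact this makes your induction superfluous: apply the observation directly to $g=w({\bf g})$, use Lemma~\ref{fullyinvarianceforwords} to identify $\pi_A(g)=w(\pi_A({\bf g}))$ and $\pi_B(g)=w(\pi_B({\bf g}))$, and conclude $u\in W(A\ast B)$ by full invariance exactly as you and the paper do. This is shorter than the paper's explicit induction, at the cost of invoking the normality of $[A,B]$ in $A\ast B$ (which the paper cites later from Serre).
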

\begin{proof}
We proceed by induction in the length of $w$. 
If $\ell(w)=1$, then $w$ can be supposed to be of the form $w(x_1) = x_1$ or $w(x_1) = x_1^{-1}$. In the first case, the statement of the lemma is equivalent to show that any element $a_1b_2a_3b_4\dots b_{k-1}a_{k}\in A\ast B$ with $a_i\in A$, $b_i\in B$, $b_i \neq 1$ and  $a_i\neq 1$ except perhaps for the end cases, can be written as $a_{1}a_{3}\dots a_{k}b_2b_{4}\dots b_{k-1}u$ with $u\in[A,B]$. That this holds true follows by using repetitively the identity $ba=ab[b^{-1},a^{-1}]$. The case of the word $w(x_{1})=x_{1}^{-1}$  follows from the case $w(x_{1})=x_{1}$ by means of a simple computation.

 Suppose that the lemma is valid for all the words of length less than $k$ and consider $w(x_1,\dots,x_n)=x_{i_1}^{s_1}\dots x_{i_r}^{s_r}$ with $\ell(w)=k\geq 2$ and $s_{r}\neq 0$. Write $w= w_1w_2$   with $w_1=x_{i_1}^{s_1}\dots x_{i_r}^{s_r - \sgn(s_r) }$ and  $w_2=x_{i_r}^{\sgn(s_r)}$. They have lengths $k-1$ and $1$, respectively.
By the inductive hypothesis, there exist $u', u'' \in W( A\ast B )\cap [A,B]$ such that 
 \begin{align*}
    w({\bf g})&=w_1({\bf g})w_2({\bf g})\\
    &=w_1(\pi_A({\bf g}))w_1(\pi_B({\bf g})) u' w_2(\pi_A({\bf g}))w_2(\pi_B({\bf g}))u''\\
    &=w_1(\pi_A({\bf g}))w_2(\pi_A({\bf g}))w_1(\pi_B({\bf g})) 
    u'   \Big[ ( w_1(\pi_B({\bf g})) u')^{-1}, w_2(\pi_A({\bf g}))^{-1}\Big]w_2(\pi_B({\bf g}))u'' \\
    &=w_1(\pi_A({\bf g}))w_2(\pi_A({\bf g}))w_1(\pi_B({\bf g}))w_2(\pi_B({\bf g}))u
   \\
   & =w(\pi_{A}({\bf g}))w(\pi_{B}({\bf g})) u,
    \end{align*}
    where
\begin{equation*}
u=   u'   \big[ ( w_1(\pi_B({\bf g})) u')^{-1}, w_2(\pi_A({\bf g}))^{-1}\big]
\big[   \big(u' \big[ ( w_1(\pi_B({\bf g})) u')^{-1}, w_2(\pi_A({\bf g}))^{-1}\big ] \big)^{-1}, w_2(\pi_B({\bf g}))^{-1}\big] u''.
\end{equation*}
It follows that $u\in[A,B]$. Moreover, since $u=w(\pi_{B}({\bf g}))^{-1}w(\pi_{A}({\bf g}))^{-1} w({\bf g})$, 
we have that $u\in W( A\ast B )\cap [A,B]$.
The uniqueness of the decomposition follows by taking projections.
\end{proof}
\begin{corollary}\label{When W(A) is trivial} Let $A$ and $B$ be groups. 
Let $W\subseteq F_{\infty}$ be a set of words. If $W(A)$ and $W(B)$ are both 
equal to the trivial group $\{1\}$, then $W(A\ast B)\subseteq[A,B]$.
\end{corollary}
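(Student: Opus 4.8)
The plan is to deduce this immediately from Lemma \ref{ABC}. Take an arbitrary element $v\in W(A\ast B)$. By Lemma \ref{single w}, $v$ can be written as $v=w({\bf g})$ for a single word $w\in\mathcal C(W)$ of length $n$ and some ${\bf g}\in(A\ast B)^n$; since $\mathcal C(W)(A\ast B)=W(A\ast B)$ and likewise for $A$ and $B$, working with $\mathcal C(W)$ instead of $W$ changes nothing, so I will just say ``word associated to $W$'' and apply the previous lemma to the singleton $\{w\}$.

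Now apply Lemma \ref{ABC}: there is $u\in W(A\ast B)\cap[A,B]$ with
\begin{equation*}
 v=w({\bf g})=w(\pi_A({\bf g}))\,w(\pi_B({\bf g}))\,u .
\end{equation*}
Here $\pi_A({\bf g})\in A^n$, so $w(\pi_A({\bf g}))\in W(A)$, and similarly $w(\pi_B({\bf g}))\in W(B)$. By hypothesis $W(A)=\{1\}$ and $W(B)=\{1\}$, hence $w(\pi_A({\bf g}))=1$ and $w(\pi_B({\bf g}))=1$, and therefore $v=u\in[A,B]$. Since $v$ was an arbitrary element of $W(A\ast B)$, this shows $W(A\ast B)\subseteq[A,B]$.

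The only point that needs a little care is the reduction to a single word of $W$ via Lemma \ref{single w}: a general element of $W(A\ast B)$ is a product of evaluations of possibly several words of $W$ and their inverses, but Lemma \ref{single w} tells us each such element is already the evaluation of one word $w\in\mathcal C(W)$, and $W(A)=\mathcal C(W)(A)$ so the hypothesis $W(A)=\{1\}$ gives $\mathcal C(W)(A)=\{1\}$ as well. Beyond this bookkeeping there is no real obstacle; the corollary is a direct consequence of the uniqueness-of-decomposition statement in Lemma \ref{ABC} applied to the trivial first two factors.
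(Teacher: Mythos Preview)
Your proof is correct and is exactly the immediate deduction the paper intends: the corollary is stated without proof precisely because it follows at once from Lemma~\ref{ABC} together with the hypothesis $W(A)=W(B)=\{1\}$, and your write-up spells this out carefully (including the use of Lemma~\ref{single w} to reduce to a single word). One minor simplification: you could avoid Lemma~\ref{single w} entirely by noting that $W(A\ast B)$ is generated by evaluations $w({\bf g})$ with $w\in W$, each of which lies in $[A,B]$ by Lemma~\ref{ABC} and the hypothesis, and $[A,B]$ is a subgroup.
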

\begin{definition}
\label{prodverbal} Let $A$ and $B$ be groups. Let $W\subseteq F_{\infty}$ be a set of words. 
The verbal product between $A$ and $B $ is 
the group
\begin{equation*}A \vW B := \faktor{A\ast B}{W(A\ast B) \cap [A,B]}\end{equation*}
The projection of $[A,B]$ onto $A \vW B$ will be denoted $[A,B]^{w}$.
\end{definition}
Verbal products were introduced by Moran in \cite{MOR56}. By definition, $A\vW B$ is equal to $B\vW A$.
If $q:A\ast B\to A\vW B$ is the quotient homomorphism, then $q|_A:A\to A\vW B$ and $q|_B:B\to A\vW B$ are injective.
For simplicity, we will also denote by $A$ and $B$ the images of $A$ and $B$ inside $A\vW B$ via the quotient homomorphism $q$.
With this identification, it is clear that the set $A\cup B\subseteq A\vW B$ generates $A\vW B$.
\begin{theorem}\cite[ Chap. 2, Theorem 1.2]{GOL56NILP}
\label{esunica}
Let $A$ and $B$ be groups. Let $W\subseteq F_{\infty}$ be a set of words. Then every element $y \in A \vW B$ has a unique writing of the form
\begin{equation*}
y= abu
\end{equation*}
with $a\in A$, $b\in B$ and $u \in [A,B]^{w}$.
 Moreover, if $y^\prime\in A \vW B$ is written as $y^\prime=a^\prime{b}^\prime u^\prime$, with $a'\in A$, $b'\in B$ and $u' \in [A,B]^{w}$, then 
$yy^\prime=aa^\prime b b^\prime \tilde{u}$, with $\tilde{u}\in[A,B]^{w}.$
\end{theorem}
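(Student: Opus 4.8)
The plan is to establish existence of the decomposition first, then uniqueness, and finally the multiplication formula, all by reducing to Lemma \ref{ABC} applied in the free product $A\ast B$ and then projecting.

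\textbf{Existence.} Let $y\in A\vW B$ and lift it to some $\tilde y\in A\ast B$ via the quotient $q:A\ast B\to A\vW B$. By the normal form theorem for free products, $\tilde y$ is a product of syllables from $A$ and $B$; repeatedly applying the identity $ba=ab[b^{-1},a^{-1}]$ (exactly as in the base case of Lemma \ref{ABC}) we can push all the $A$-syllables to the left and all the $B$-syllables next, collecting a tail $\tilde u\in[A,B]$. Thus $\tilde y=\tilde a\tilde b\tilde u$ with $\tilde a\in A$, $\tilde b\in B$, $\tilde u\in[A,B]$. Applying $q$ and using the identification of $A$, $B$ with their images and $[A,B]^w=q([A,B])$ gives $y=abu$ with $a\in A$, $b\in B$, $u\in[A,B]^w$.

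\textbf{Uniqueness.} Suppose $abu=a'b'u'$ in $A\vW B$ with $a,a'\in A$, $b,b'\in B$, $u,u'\in[A,B]^w$. Lift both sides to $A\ast B$: write $u=q(\tilde u)$, $u'=q(\tilde u')$ with $\tilde u,\tilde u'\in[A,B]$, so $\tilde a\tilde b\tilde u$ and $\tilde a'\tilde b'\tilde u'$ have the same image in $A\vW B$, i.e. $(\tilde a'\tilde b'\tilde u')^{-1}\tilde a\tilde b\tilde u\in W(A\ast B)\cap[A,B]$. Since $W(A\ast B)\cap[A,B]\subseteq[A,B]$, this element maps to $1$ under both $\pi_A$ and $\pi_B$; hence $\pi_A(\tilde a\tilde b\tilde u)=\pi_A(\tilde a'\tilde b'\tilde u')$, which reads $\tilde a=\tilde a'$ since $\pi_A$ kills $B$ and $[A,B]$, and similarly $\tilde b=\tilde b'$. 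Therefore $a=a'$ and $b=b'$, and then cancelling gives $u=u'$. (Alternatively, one invokes the uniqueness clause of Lemma \ref{ABC} with the word $w(x_1)=x_1$.) This is the step where one must be careful that the relevant kernel really lies inside $[A,B]$ so that the projections $\pi_A,\pi_B$ are well-defined on the relevant cosets — but that is immediate from $W(A\ast B)\cap[A,B]\subseteq[A,B]$, so I do not expect a genuine obstacle here.

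\textbf{Multiplication formula.} Given $y=abu$ and $y'=a'b'u'$, compute
\begin{align*}
yy'&=ab\,u\,a'b'u'\\
&=a\bigl(b a'\bigr)\bigl(a'^{-1}u a'\bigr)b'u'\\
&=a a'\,[b^{-1},a'^{-1}]^{-1}\cdots,
\end{align*}
more simply: since $[A,B]^w$ is normal in $A\vW B$ (being the image of the normal subgroup $[A,B]\trianglelefteq A\ast B$, which is normal because it is the kernel direction of the cartesian subgroup, or directly because $A\vW B$ modulo $[A,B]^w$ is the abelian-type quotient $A\times B$), conjugating $u$ past $a'b'$ keeps it in $[A,B]^w$, and moving $b$ past $a'$ introduces a commutator $[b^{-1},a'^{-1}]\in[A,B]^w$. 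Collecting, $yy'=aa'bb'\tilde u$ with $\tilde u\in[A,B]^w$. The cleanest route is: the quotient map $A\vW B\to (A\vW B)/[A,B]^w\cong A\times B$ sends $abu\mapsto(a,b)$, so modulo $[A,B]^w$ the product is $(aa',bb')$; lifting back, $yy'$ and $aa'bb'$ differ by an element of $[A,B]^w$, which is exactly the claim.

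The only mildly delicate point is checking that $[A,B]^w$ is normal in $A\vW B$ and that the quotient by it is $A\times B$; both follow because $[A,B]$ is normal in $A\ast B$ with quotient $A\times B$, and $W(A\ast B)\cap[A,B]\subseteq[A,B]$ is itself normal in $A\ast B$ (it is the intersection of two normal subgroups), so everything descends. Given the machinery already assembled — especially Lemma \ref{ABC} — this theorem is essentially a repackaging, and I would expect the write-up to be short.
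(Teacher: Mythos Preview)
Your proof is correct and follows essentially the same route as the paper: lift via $q$ to $A\ast B$, invoke Lemma \ref{ABC} with the word $w=x_1$ for existence, and use the projections $\pi_A,\pi_B$ (which kill $[A,B]$ and the kernel $W(A\ast B)\cap[A,B]$) for uniqueness. The only minor difference is in the multiplication step: the paper lifts $y,y'$ to $g,g'\in A\ast B$ and applies Lemma \ref{ABC} directly to $gg'$ to read off $\pi_A(gg')=aa'$, $\pi_B(gg')=bb'$, whereas you argue via normality of $[A,B]^w$ and the quotient $A\vW B\to A\times B$; both are equivalent one-line observations.
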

\begin{proof} 
Let $q:A\ast B \to A \vW B$ be the quotient homomorphism. There exists $g\in A\ast B$ such that $y=q(g)$. Lemma \ref{ABC} applied to the word $w=x_1$ says that $g$ can be written as 
$g=abu$ with $a\in A$, $b\in B$ and $u\in[A,B]$. Then $y=q(g)=q(a)q(b)q(u)=abq(u)$, with $q(u)\in [A,B]^{w}$. 
The uniqueness of the decomposition follows by taking projections. 

The same procedure shows that $y^\prime=q(g^\prime)=q(a^\prime{b}^\prime u^\prime)$. 
 Lemma \ref{ABC} implies that $yy^\prime=q(gg^\prime)=q(\pi_{A}(gg^\prime)\pi_{B}(gg^\prime)\tilde{u})=q(aa^\prime bb^\prime\tilde{u})=aa^\prime bb^\prime q(\tilde{u})$ with $q(\tilde{u})\in [A,B]^{w}$.
\end{proof}

By means of Theorem \ref{esunica}, it is easy to deduce that $A\cap \overline{B}^{^{A \vW B}}=\{1\}$ and $B\cap \overline{A}^{^{A \vW B}}=\{1\}$, 
where the symbol 
 $-^{A \vW B}$ denotes the normal closure inside $A \vW B$.
\begin{proposition}\label{trivial intersection}
Let $A$ and $B$ be groups. Let $W\subseteq F_{\infty}$ be a set of words. Then 
\begin{center}
$W(A \vW B) \cap [A,B]^{w} = \{1\}$.
\end{center}
\end{proposition}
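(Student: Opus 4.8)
The plan is to reduce everything to the free product $A\ast B$ via the quotient homomorphism $q\colon A\ast B\to A\vW B$, whose kernel is, by Definition \ref{prodverbal}, exactly $N:=W(A\ast B)\cap[A,B]$.

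The first step is to identify the two subgroups appearing in the statement as images under $q$. By definition $[A,B]^{w}=q([A,B])$. Moreover, since $q$ is surjective, Lemma \ref{fullyinvarianceforwords} gives $q\big(W(A\ast B)\big)=W(A\vW B)$: concretely, every element of $W(A\vW B)$ is a product of evaluations of words of $W$ at elements of $A\vW B$, each such element of $A\vW B$ is $q$ of some element of $A\ast B$, and by Lemma \ref{fullyinvarianceforwords} the evaluation itself is then $q$ of the corresponding evaluation in $A\ast B$.

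The second step is the actual argument. Take $z\in W(A\vW B)\cap[A,B]^{w}$. By the first step we may write $z=q(v)$ with $v\in W(A\ast B)$ and also $z=q(c)$ with $c\in[A,B]$. Then $vc^{-1}\in\ker q=N\subseteq W(A\ast B)$; since $W(A\ast B)$ is a subgroup containing both $v$ and $vc^{-1}$, it contains $c^{-1}=v^{-1}(vc^{-1})$, hence $c\in W(A\ast B)$. Combined with $c\in[A,B]$ this yields $c\in W(A\ast B)\cap[A,B]=\ker q$, so $z=q(c)=1$.

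There is no serious obstacle here: the only point one must keep in view is that $\ker q$ is contained in $W(A\ast B)$ — which is built into the definition of the verbal product — so that membership in $\ker q$ can be promoted back into the verbal subgroup upstairs. The proposition is thus essentially a bookkeeping consequence of Lemma \ref{fullyinvarianceforwords} together with Definition \ref{prodverbal}, and in particular does not require the full strength of the normal-form Theorem \ref{esunica}.
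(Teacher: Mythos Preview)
Your proof is correct and follows essentially the same argument as the paper: both lift an element of the intersection to preimages $v\in W(A\ast B)$ and $c\in[A,B]$ via Lemma \ref{fullyinvarianceforwords}, use that $vc^{-1}\in\ker q=W(A\ast B)\cap[A,B]$ to conclude $c\in W(A\ast B)\cap[A,B]=\ker q$, and hence $z=q(c)=1$. The only cosmetic difference is that the paper also notes the symmetric conclusion $v\in[A,B]$, which is not needed.
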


\begin{proof}
Let $q:A\ast B \to A \vW B$ be the quotient homomorphism. 
By Lemma \ref{fullyinvarianceforwords}, $W(A\vW B) = W(q(A \ast B))= q(W(A \ast B))$. 
Hence if $y\in W(A \vW B) \cap [A,B]^{w}$, then $y=q(g)$ with $g\in W(A \ast B)$ and $y=q(\tilde{g})$ with $\tilde{g}\in [A,B]$. 
It follows that $g\tilde{g}^{-1} \in \ker (q)=W(A \ast B) \cap [A,B]$ and hence $g\in [A,B]$ and 
$\tilde{g}\in W(A\ast B)$. We conclude that $y=1$ in $W(A \vW B)$.
\end{proof}

\begin{lemma}\label{commute with word in A}
Let $A$ and $B$ be groups. Let $W\subseteq F_{\infty}$ be a set of words. Let $a \in W(A)\subseteq  A\vW B$. Then $[a,B]= 1$  in $A\vW B$ and $[a,[A,B]^{w}] =1$ in $A\vW B$.
\end{lemma}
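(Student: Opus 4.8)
The plan is to do the whole argument inside the free product $A\ast B$ and then push forward along the quotient map $q\colon A\ast B\to A\vW B$, whose kernel is $W(A\ast B)\cap[A,B]$. Both assertions then reduce to the same thing: the relevant commutators, computed in $A\ast B$, must be shown to lie simultaneously in the verbal subgroup $W(A\ast B)$ and in the cartesian subgroup $[A,B]$, i.e. in $\ker q$.

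First I would record the two facts about $a$, regarded as an element of $A\ast B$ via the inclusion $A\hookrightarrow A\ast B$. By Lemma \ref{fullyinvarianceforwords} applied to that inclusion, $W(A)\subseteq W(A\ast B)$, so $a\in W(A\ast B)$; and by Corollary \ref{fully invariance of verbal subgroups}, $W(A\ast B)$ is normal in $A\ast B$. From these two facts it follows at once that $[a,g]=a\,(g a^{-1} g^{-1})\in W(A\ast B)$ for \emph{every} $g\in A\ast B$. This single observation takes care of the ``verbal half'' of both parts of the statement.

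It remains to handle the ``cartesian half''. For $b\in B$, the element $[a,b]$ lies in $[A,B]$ directly by the definition of $[A,B]$ (it is one of its generators), so $[a,b]\in W(A\ast B)\cap[A,B]=\ker q$, whence $[a,b]=1$ in $A\vW B$; as $b$ is arbitrary, $[a,B]=1$. For the second part one needs $[a,c]\in[A,B]$ whenever $c\in[A,B]$. Here I would invoke Lemma \ref{ABC} with the word $w=x_1$: it gives $g=\pi_A(g)\pi_B(g)u$ with $u\in[A,B]$ for every $g$, which identifies $[A,B]$ with $\ker(A\ast B\to A\times B)$ and in particular shows $[A,B]$ is normal in $A\ast B$. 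Then $aca^{-1}\in[A,B]$, so $[a,c]=(aca^{-1})c^{-1}\in[A,B]$; combined with the previous paragraph, $[a,c]\in\ker q$, and therefore $q([a,c])=[a,q(c)]=1$ for all $c\in[A,B]$, i.e. $[a,[A,B]^w]=1$ in $A\vW B$.

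There is no real obstacle in this proof; the only point that needs a moment's care is the normality of $[A,B]$ in $A\ast B$, since the text introduces $[A,B]$ merely as the subgroup generated by the commutators $[a,b]$. That is disposed of in one line via Lemma \ref{ABC} as indicated above, after which everything is a routine consequence of normality of $W(A\ast B)$ and of $[A,B]$.
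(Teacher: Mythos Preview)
Your proof is correct. The core idea---showing each commutator lies both in the verbal subgroup and in the cartesian subgroup---is the same as the paper's, but you organize it differently: you work entirely in the free product $A\ast B$ and show the commutators lie in $\ker q = W(A\ast B)\cap[A,B]$, whereas the paper works downstairs in $A\vW B$ and invokes Proposition~\ref{trivial intersection} (that $W(A\vW B)\cap[A,B]^w=\{1\}$) to conclude. Your approach is slightly more self-contained in that it bypasses Proposition~\ref{trivial intersection} altogether, and you are more explicit about why $[A,B]$ is normal in $A\ast B$ (the paper uses the analogous fact that $[A,B]^w$ is normal in $A\vW B$ without comment). The paper's version, on the other hand, stays inside the quotient group throughout and so reads a bit more cleanly once Proposition~\ref{trivial intersection} is available. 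Either way the substance is the same.
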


\begin{proof}
By  Lemma \ref{single w}, it is enough to prove the statement for $a$ 
of the form $a=w({\bf a})$ where $w\in\mathcal{C}(W)$ is a word in $n$ letters and ${\bf a} \in A^{n}$ for some $n\in\N$.
In order to show the first part of the lemma, let $b \in B\subseteq A\vW B$. Then
$[a,b]= w({\bf a})bw({\bf a})^{-1}b^{-1} \in [A,B]^{w}$.
 Corollary \ref{fully invariance of verbal subgroups}, implies that $bw({\bf a})^{-1}b^{-1} \in W(A\vW B)$, and then $[a,b] \in W(A\vW B)$. Thus $[a,b] \in W(A \vW B) \cap [A,B]^{w}=\{1\}$, by Proposition \ref{trivial intersection}.
 
In order to show the second part of the lemma, let $c \in [A,B]^{w}$.  Corollary \ref{fully invariance of verbal subgroups}, implies that $ca^{-1}c^{-1} \in W(A\vW B)$, then $[a,c] \in W(A\vW B)$, and as $aca^{-1} \in [A,B]^{w}$, then $[a,c]\in [A,B]^{w}$. Hence $[a,c] \in W(A\vW B)\cap [A,B]^{w} = \{1\}$, by Proposition \ref{trivial intersection}.
\end{proof}
The following result can be found in \cite[Corollary 4.2.2]{MOR56}. 
We provide a short a proof of it.

\begin{proposition}\label{prod}
Let $A$ and $B$ be groups. Let $W\subseteq F_{\infty}$ be a set of words.  
The function  \begin{equation*}\Psi:W(A) \times W(B)\to W( A \vW B)\end{equation*} defined by $\Psi(a,b):=ab$ is a group isomorphism.
\end{proposition}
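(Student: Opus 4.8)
The plan is to verify that $\Psi$ is a well-defined homomorphism, then produce an explicit inverse, using the structure theorems already established. First I would check that $\Psi$ lands in $W(A\vW B)$: since $q|_A$ and $q|_B$ are injective, we identify $W(A)$ and $W(B)$ with their images in $A\vW B$, and by Lemma \ref{fullyinvarianceforwords} these images are exactly $W(A)$ and $W(B)$ computed inside $A\vW B$; since $W(A\vW B)$ is a subgroup containing both, $\Psi(a,b)=ab\in W(A\vW B)$.

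Next I would show $\Psi$ is a homomorphism. Given $(a,b),(a',b')\in W(A)\times W(B)$, we must see $ab a' b' = (aa')(bb')$ in $A\vW B$, i.e. that $b$ and $a'$ commute. But $a'\in W(A)$ and $b\in B$, so Lemma \ref{commute with word in A} gives $[a',b]=1$ in $A\vW B$, hence $ba'=a'b$ and $\Psi$ is multiplicative. (Alternatively one invokes the last clause of Lemma \ref{commute with word in A} together with injectivity.) Injectivity of $\Psi$ is immediate from Theorem \ref{esunica}: if $ab=1$ in $A\vW B$ with $a\in A$, $b\in B$, then by uniqueness of the normal form $abu$ (here $u=1$) we get $a=b=1$.

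The substantive point is surjectivity, equivalently the construction of the inverse map. By Lemma \ref{single w}, a typical element of $W(A\vW B)=q(W(A\ast B))$ is $q(w(\mathbf g))$ for a single word $w\in\mathcal C(W)$ and some $\mathbf g\in (A\ast B)^n$. Apply Lemma \ref{ABC} to $w$ and $\mathbf g$: there is $u\in W(A\ast B)\cap[A,B]$ with $w(\mathbf g)=w(\pi_A(\mathbf g))\,w(\pi_B(\mathbf g))\,u$. Now $u\in\ker q$, so in $A\vW B$ we get $q(w(\mathbf g)) = w(\pi_A(\mathbf g))\,w(\pi_B(\mathbf g))$, and $w(\pi_A(\mathbf g))\in W(A)$, $w(\pi_B(\mathbf g))\in W(B)$. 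Thus every element of $W(A\vW B)$ has the form $ab$ with $a\in W(A)$, $b\in W(B)$, i.e. lies in the image of $\Psi$. Combined with injectivity, $\Psi$ is an isomorphism. I expect the only mild obstacle to be bookkeeping around the identification of $W(A)$, $W(B)$ with subgroups of $A\vW B$ and making sure Lemma \ref{ABC} is applied to a word in the closure $\mathcal C(W)$ rather than to the original generating set $W$ — both are handled by Lemma \ref{single w} and Lemma \ref{fullyinvarianceforwords}, so no real difficulty arises.
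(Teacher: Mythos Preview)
Your proof is correct and follows essentially the same route as the paper: Lemma \ref{commute with word in A} for the homomorphism property, Theorem \ref{esunica} for injectivity, and Lemma \ref{single w} combined with Lemma \ref{ABC} for surjectivity. The only difference is that you spell out why $\Psi$ lands in $W(A\vW B)$, which the paper leaves implicit.
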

\begin{proof}
By Lemma  \ref{commute with word in A},
\begin{equation*}\Psi(a,b)\Psi(a^\prime,b^\prime)= aba^\prime b^\prime
=aa^\prime [a^{\prime -1},b] b b^\prime=aa^\prime b  b^\prime=\Psi((a,b)(a^\prime,b^\prime)).
\end{equation*}
Hence  $\Psi$ is a group homomorphism. Theorem \ref{esunica} implies that $\Psi$ is injective.
In order  to show that it is surjective,
take $y \in W(A\vW B)$. Then $y=q(g)$ with $g\in W(A \ast B)$. By Lemma \ref{single w}, $g$ is of the form 
$g=w({\bf g})$ where $w\in\mathcal{C}(W)$ is a word in $n$ letters and ${\bf g} \in (A\ast B)^{n}$, for some $n\in\N$.
By Lemma \ref{ABC}, $w({\bf g})=w(\pi_{A}({\bf g}))w(\pi_{B}({\bf g}))u$ with $u\in W(A\ast B)\cap[A,B]$.
Hence
 \begin{align*}y&=q(w({\bf g}))= q(w(\pi_{A}({\bf g}))w(\pi_{B}({\bf g}))u)=q(w(\pi_{A}({\bf g})))q(w(\pi_{B}({\bf g})))\\
&=\Psi(w(\pi_{A}({\bf g})),w(\pi_{B}({\bf g}))).\qedhere
\end{align*}
\end{proof}
\begin{lemma}
\label{igualdadintersec} Let $A$ and $B$ be groups.  Let $W\subseteq F_{\infty}$ be a set of words. Let $M$ be a normal subgroup of $A$ and let $N$ be a normal subgroup of $B$.
Denote  by $\overline{MN}^{^{A \ast B}}$  the normal closure of the group generated by $M$ and $N$ inside the free product $A \ast B$.
Let $\phi:A \ast B \to \nicefrac{A}{M} \ast \nicefrac{B}{N}$ be the natural homomorphism. Then 
\begin{enumerate}
\item  \label{uno}
 $\ker(\phi)=\overline{MN}^{^{A \ast B}}$,
\item \label{dos} 
$\phi(W(A\ast B) \cap [A,B]) = W(\phi(A\ast B)) \cap [\phi(A),\phi(B)]$.
\end{enumerate}
\end{lemma}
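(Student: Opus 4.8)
The plan is to prove part \eqref{uno} first, since it is the standard description of the kernel of a map between free products induced by quotients on the factors, and then deduce part \eqref{dos} from it together with the fully-invariance results (Corollary \ref{fully invariance of verbal subgroups}, Lemma \ref{fullyinvarianceforwords}) and the structure lemma (Lemma \ref{ABC}). For \eqref{uno}: the inclusion $\overline{MN}^{A\ast B}\subseteq\ker(\phi)$ is immediate because $\phi$ kills $M$ and $N$ by construction and $\ker(\phi)$ is normal. For the reverse inclusion, I would pass to the quotient: $\phi$ factors through $(A\ast B)/\overline{MN}^{A\ast B}$, and by the universal property of free products one checks that this quotient is canonically isomorphic to $(A/M)\ast(B/N)$ in a way compatible with $\phi$, so the induced map is an isomorphism and hence $\ker(\phi)=\overline{MN}^{A\ast B}$. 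Concretely, one builds the inverse using the canonical maps $A/M\to (A\ast B)/\overline{MN}^{A\ast B}$ and $B/N\to (A\ast B)/\overline{MN}^{A\ast B}$ coming from $A\hookrightarrow A\ast B$ and $B\hookrightarrow A\ast B$, which are well-defined precisely because $M$ and $N$ map into $\overline{MN}^{A\ast B}$.

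For \eqref{dos}, write $\bar A=\phi(A)=A/M$, $\bar B=\phi(B)=B/N$, so $\phi(A\ast B)=\bar A\ast\bar B$. The inclusion $\phi(W(A\ast B)\cap[A,B])\subseteq W(\bar A\ast\bar B)\cap[\bar A,\bar B]$ is the easy direction: by Lemma \ref{fullyinvarianceforwords}, $\phi(W(A\ast B))\subseteq W(\bar A\ast\bar B)$ (in fact with equality, since $\phi$ is surjective), and clearly $\phi([A,B])\subseteq[\bar A,\bar B]$ because $\phi$ sends a generator $[a,b]$ to $[\phi(a),\phi(b)]$. For the reverse inclusion, take $y\in W(\bar A\ast\bar B)\cap[\bar A,\bar B]$. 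Since $\phi$ is surjective and $W(\bar A\ast\bar B)=\phi(W(A\ast B))$, there is $g\in W(A\ast B)$ with $\phi(g)=y$. I also want a preimage living in $[A,B]$: since $y\in[\bar A,\bar B]$, write $y$ as a product of commutators $[\phi(a_i),\phi(b_i)]^{\pm1}=\phi([a_i,b_i]^{\pm1})$, and let $h\in[A,B]$ be the corresponding product of the $[a_i,b_i]^{\pm1}$, so $\phi(h)=y$. Then $\phi(g h^{-1})=1$, i.e. $gh^{-1}\in\ker(\phi)=\overline{MN}^{A\ast B}$ by part \eqref{uno}.

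The crux is then to show that $\overline{MN}^{A\ast B}$ is itself contained in $[A,B]$ modulo $W$ — more precisely, that $gh^{-1}\in\overline{MN}^{A\ast B}$ forces $g$ (which lies in $W(A\ast B)$) to be expressible as an element of $W(A\ast B)\cap[A,B]$. The clean way to see this uses Lemma \ref{ABC}: decompose $g=w(\pi_A(\mathbf g))\,w(\pi_B(\mathbf g))\,u$ with $u\in W(A\ast B)\cap[A,B]$, where $g=w(\mathbf g)$ for a single word $w\in\mathcal C(W)$ (Lemma \ref{single w}). Applying $\phi$ and using that $\phi$ commutes with word evaluation gives $y=w(\pi_{\bar A}(\phi\mathbf g))\,w(\pi_{\bar B}(\phi\mathbf g))\,\phi(u)$; but $y\in[\bar A,\bar B]$, so by the uniqueness part of Lemma \ref{ABC} (applied in $\bar A\ast\bar B$) the $\bar A$- and $\bar B$-components vanish, i.e. $w(\pi_{\bar A}(\phi\mathbf g))=1$ in $\bar A$ and $w(\pi_{\bar B}(\phi\mathbf g))=1$ in $\bar B$. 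Equivalently $w(\pi_A(\mathbf g))\in M$ and $w(\pi_B(\mathbf g))\in N$; but $w(\pi_A(\mathbf g))\in W(A)$ and $w(\pi_B(\mathbf g))\in W(B)$ as well. This is not quite enough on its own, so the remaining point — which I expect to be the main technical obstacle — is to argue that an element of $W(A\ast B)$ whose $A$- and $B$-components under Lemma \ref{ABC} lie in $M\cap W(A)$ and $N\cap W(B)$ respectively can be rewritten, modulo the kernel $W(A\ast B)\cap[A,B]$ of the verbal product (or directly inside $W(A\ast B)$), as a genuine commutator element; I would handle this by absorbing $w(\pi_A(\mathbf g))$ and $w(\pi_B(\mathbf g))$ into the picture using that they are themselves in $W$ of the respective factors and that $\phi$ of them is trivial, then invoking part \eqref{uno} once more to conclude $g\in W(A\ast B)\cap[A,B]\cdot(\text{something in }\ker\phi)$, and finally taking $\phi$ to land back on $y$. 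Thus $y=\phi(g)\in\phi(W(A\ast B)\cap[A,B])$, completing the reverse inclusion.
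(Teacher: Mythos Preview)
Your proof of part \eqref{uno} via the universal property of free products is a perfectly good alternative to the paper's direct induction on the number of syllables of a reduced word in $\ker(\phi)$; either works, and yours is arguably cleaner though less self-contained.

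For part \eqref{dos}, your core argument is exactly the paper's: lift $y$ to $g=w(\mathbf g)\in W(A\ast B)$ via Lemma \ref{single w}, decompose $g=w(\pi_A(\mathbf g))\,w(\pi_B(\mathbf g))\,u$ by Lemma \ref{ABC} with $u\in W(A\ast B)\cap[A,B]$, apply $\phi$, and use the uniqueness in Lemma \ref{ABC} in $\bar A\ast\bar B$ to kill the $\bar A$- and $\bar B$-components. But you then miss that you are \emph{already finished}: the equation $y=\phi(w(\pi_A(\mathbf g)))\,\phi(w(\pi_B(\mathbf g)))\,\phi(u)$ together with the vanishing of the first two factors gives $y=\phi(u)$ with $u\in W(A\ast B)\cap[A,B]$, which is precisely the desired membership $y\in\phi(W(A\ast B)\cap[A,B])$. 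There is no ``remaining technical obstacle''; the detour through a second preimage $h\in[A,B]$, through $gh^{-1}\in\overline{MN}^{A\ast B}$, and through trying to place $g$ itself in $W(A\ast B)\cap[A,B]$ is entirely unnecessary (and indeed $g$ need not lie there). Drop that paragraph and conclude immediately after the uniqueness step.
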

\begin{proof} 
Item \eqref{uno} is well known. For the sake of completeness, we include a proof here.
Since $M,N\subseteq\ker(\phi)$, then $\overline{MN}^{^{A \ast B}} \subseteq \ker(\phi)$. So, it is enough to prove the reverse  inclusion. 
We proceed by induction on the number of factors of a reduced word $g\in\ker(\phi)\subseteq A\ast B$.
If $g$ has at most one factor, the result is immediately true.
 Suppose that every reduced word in $\ker(\phi)$ with at most $n-1$ factors belongs to $\overline{MN}^{^{A \ast B}}$.
Let $g=g_1g_2\dots g_n$  be a reduced word with $n\geq 2$ factors in $A\ast B$ and suppose that $g\in \ker(\phi)$. 
 If $\phi(g_i)\neq 1$  for every $1\leq i \leq n$, then $\phi(g)$ is a reduced word with $n\geq 2$ factors in $\nicefrac{A}{M} \ast \nicefrac{B}{N}$,  contradicting that $\phi(g)=1$. 
 Then there exists $k$ such that $\phi(g_k) = 1$.
 Define $\tilde{g}:= g_1 \dots g_{k-1} $ and  $\hat{g}:= \tilde{g} g_{k+1} \dots g_n$. 
 We have that $\phi(\hat{g})=1$.
 By the inductive hypothesis, $\hat{g} \in \overline{MN}^{^{A \ast B}}$.
 Observe that since  $g_k$ is either in $M$ or in $N$, then $\tilde{g}g_k \tilde{g}^{-1}\in \overline{MN}^{^{A \ast B}}$. 
Writing $g =(\tilde{g} g_k \tilde{g}^{-1}) \hat{g}$,
it follows that $g$ belongs to $\overline{MN}^{^{A \ast B}}$.

Let us proceed to prove \eqref{dos}. For one of the inclusions it is enough to observe that
\begin{equation*}\phi(W(A\ast B) \cap [A,B]) \subseteq \phi(W(A\ast B)) \cap\phi( [A,B]) = W(\phi(A\ast B)) \cap [\phi(A),\phi(B)],\end{equation*} where the last equality is valid due to Corollary \ref{fully invariance of verbal subgroups} and the fact that, since $\phi$ is surjective, then $[\phi(A),\phi(B)]= \phi([A,B])$.

 In  order to prove the reverse inclusion, let $y \in W(\phi(A\ast B)) \cap [\phi(A),\phi(B)]$.
 On the one hand, $y \in W(\phi(A\ast B))=\phi(W(A \ast B))$. Then by Lemma \ref{single w}, $y$ is of the form 
$y=\phi(w({\bf g}))$ where $w\in\mathcal{C}(W)$ is a word in $n$ letters and ${\bf g} \in (A\ast B)^{n}$, for some $n\in\N$.
By Lemma \ref{ABC}, $y=\phi(w({\bf g}))=\phi(w(\pi_{A}({\bf g})))\phi(w(\pi_{B}({\bf g})))\phi(u)$ with $\phi(u)\in \phi(W(A\ast B)\cap[A,B])\subseteq [\phi(A),\phi(B)]=[A/M,B/N]$.
On the other hand $y \in [A/M,B/N]$. Then the uniqueness assertion of Lemma \ref{ABC} applied to the word $w=x_1$ and the group $\nicefrac{A}{M} \ast \nicefrac{B}{N}$, implies that $\phi(w(\pi_{A}({\bf g})))=\phi(w(\pi_{B}({\bf g})))=1$. Hence $y= \phi(u) \in \phi (W(A\ast B) \cap [A,B])$.
\end{proof}

\begin{theorem}\label{quotient}
Let $A$ and $B$ be groups.  Let $W\subseteq F_\infty$ be a set of words. Let $M$ be a normal subgroup of $A$ and let $N$ be a normal subgroup of $B$. 
 Then
\begin{equation*}\faktor{A\vW B}{\overline{MN}^{^{A \vW B}}} \cong \faktor{A}{M} \vW \faktor{B}{N}\end{equation*}
where $\overline{MN}^{^{A \vW B}}$ is the normal closure of the group generated by $M$ and $N$ inside $A \vW B$.
\end{theorem}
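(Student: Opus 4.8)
The plan is to exhibit a surjective homomorphism $\overline\Phi\colon A\vW B\to \faktor{A}{M}\vW\faktor{B}{N}$ and to identify its kernel with $\overline{MN}^{^{A\vW B}}$; the stated isomorphism is then the first isomorphism theorem. Let $\phi\colon A\ast B\to \faktor{A}{M}\ast\faktor{B}{N}$ be the natural homomorphism induced by the quotient maps on the two factors, let $q\colon A\ast B\to A\vW B$ and $p\colon \faktor{A}{M}\ast\faktor{B}{N}\to \faktor{A}{M}\vW\faktor{B}{N}$ be the two defining quotient maps, and set $\Phi\defeq p\circ\phi$, which is surjective because $\phi$ and $p$ are. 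Note that $\phi(A\ast B)=\faktor{A}{M}\ast\faktor{B}{N}$, $\phi(A)=\faktor{A}{M}$ and $\phi(B)=\faktor{B}{N}$, so by Definition \ref{prodverbal} one has $\ker p=W(\phi(A\ast B))\cap[\phi(A),\phi(B)]$, and likewise $\ker q=W(A\ast B)\cap[A,B]$.

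First I would check that $\Phi$ kills $\ker q$: since $\Phi=p\circ\phi$, $\Phi(W(A\ast B)\cap[A,B])=p\big(\phi(W(A\ast B)\cap[A,B])\big)$, and by Lemma \ref{igualdadintersec}\eqref{dos} the inner set equals $W(\phi(A\ast B))\cap[\phi(A),\phi(B)]=\ker p$. Hence $\Phi$ descends to a surjection $\overline\Phi\colon A\vW B\to \faktor{A}{M}\vW\faktor{B}{N}$ with $\overline\Phi\circ q=\Phi$.

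Next I would compute the kernel. Since $q$ is surjective and $\ker q\subseteq\ker\Phi$, we have $\ker\overline\Phi=q(\ker\Phi)$. Now $\ker\Phi=\phi^{-1}(\ker p)=\phi^{-1}\big(W(\phi(A\ast B))\cap[\phi(A),\phi(B)]\big)=\phi^{-1}\big(\phi(W(A\ast B)\cap[A,B])\big)$, again by Lemma \ref{igualdadintersec}\eqref{dos}. For any subgroup $K\le A\ast B$ one has $\phi^{-1}(\phi(K))=K\cdot\ker\phi$, and $\ker\phi=\overline{MN}^{^{A\ast B}}$ by Lemma \ref{igualdadintersec}\eqref{uno}; therefore $\ker\Phi=\big(W(A\ast B)\cap[A,B]\big)\cdot\overline{MN}^{^{A\ast B}}$. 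Applying $q$ and using $q(\ker q)=\{1\}$ gives $\ker\overline\Phi=q\big(\overline{MN}^{^{A\ast B}}\big)$, and since $q$ is surjective this image is exactly the normal closure in $A\vW B$ of the images of $M$ and $N$, that is $\overline{MN}^{^{A\vW B}}$ under the standing identification of $A$, $B$ with their images in $A\vW B$. Thus $\overline\Phi$ is a surjection with kernel $\overline{MN}^{^{A\vW B}}$, which proves the theorem.

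The only substantive ingredient is Lemma \ref{igualdadintersec}, invoked twice: part \eqref{dos} both to see that $\Phi$ factors through $q$ and to recognise $\ker\Phi$ as $\phi^{-1}(\phi(\ker q))$, and part \eqref{uno} to pin down $\ker\phi$. Everything else is the routine behaviour of images and preimages of subgroups, and of normal closures, under surjective homomorphisms. I expect the main thing to be careful about in the write-up to be bookkeeping: keeping clearly separated the two kinds of quotient in play — the quotients by a verbal-type subgroup $W(-)\cap[-,-]$ that define the verbal products, versus the quotient by the normal closure $\overline{MN}$ — and noting that $\big(W(A\ast B)\cap[A,B]\big)\cdot\overline{MN}^{^{A\ast B}}$ is genuinely a subgroup because $\overline{MN}^{^{A\ast B}}$ is normal.
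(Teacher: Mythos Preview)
Your proof is correct and is essentially the same argument as the paper's: you build the same commutative square, use Lemma~\ref{igualdadintersec}\eqref{dos} to factor through $q$ and to identify $\ker\Phi$ with $\phi^{-1}(\phi(\ker q))$, then Lemma~\ref{igualdadintersec}\eqref{uno} together with $\phi^{-1}(\phi(K))=K\cdot\ker\phi$ to finish. The only differences from the paper are notational (your $p$ is the paper's $\tilde q$, your $\overline\Phi$ is the paper's $\Phi$).
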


\begin{proof}
We consider the following diagram
\[
\begin{tikzcd}
A\ast B \arrow{r}{\phi} \arrow[swap]{d}{q} & \faktor{A}{M}\ast\faktor{B}{N} \arrow{d}{\tilde{q}} \\
A \vW B \arrow[dashed]{r}{\Phi} & \faktor{A}{M}\vW\faktor{B}{N}
\end{tikzcd}
\]
$\Phi$ is a well defined  homomorphism because
\begin{align}\label{cuenta}
\phi(\ker(q)) &=\phi(W(A\ast B) \cap [A,B]) 
\nonumber\\ 
&= W(\phi(A\ast B)) \cap [\phi(A),\phi(B)] 
\nonumber
\\&= W\Big(\faktor{A}{M}\ast\faktor{B}{N}\Big)\cap \Big[\faktor{A}{M},\faktor{B}{N}\Big]
\nonumber\\
&=\ker(\tilde{q}).
\end{align}
where the second equality is due to Lemma \ref{igualdadintersec}\eqref{dos}. Moreover, since  $\phi$ and $\tilde{q}$ are surjective, $\Phi$ is surjective.
On the other hand,
\begin{align*}
    \ker(\Phi)&=q(\phi^{-1}(\ker(\tilde{q})))=q(\phi^{-1}(\phi (\ker(q))))  = q (\ker(q) \ker(\phi )) 
    = q (\ker( \phi)) \\
    &= q( \overline{MN}^{A\ast B})
    = \overline{MN}^{A\vW B},
    \end{align*}
    where the third equality is due to equation \eqref{cuenta}, the second to last equality is due Lemma \ref{igualdadintersec}\eqref{uno}, and the last equality is because $q$ is surjective.
\end{proof}

\begin{corollary}\label{coro on short exact sequence}  Let $A$ and $B$ be groups.  Let $W\subseteq F_\infty$ be a set of words. 
Then $ \faktor{A \vW B}{W(A \vW B)}$ is isomorphic to  
$\faktor{A}{W(A)} \vW \faktor{B}{W(B)}$ and this gives the short exact sequence
\begin{equation}\label{sec}
 1 \to W(A)\times W(B) \to A \vW B \to   \faktor{A}{W(A)} \vW \faktor{B}{W(B)}\to 1.
\end{equation}
Moreover the subgroup $[A,B]^w$ of $A\vW B$ is isomorphic to 
the subgroup $\Big[\faktor{A}{W(A)}, \faktor{B}{W(B)}\Big]^w$ of $\,\,\faktor{A}{W(A)} \vW \faktor{B}{W(B)}$.
\end{corollary}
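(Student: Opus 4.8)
The plan is to apply Theorem~\ref{quotient} with $M=W(A)$ and $N=W(B)$; the one point that genuinely requires an argument is the identification of the normal closure $\overline{W(A)W(B)}^{^{A\vW B}}$ with the verbal subgroup $W(A\vW B)$. For one inclusion, Corollary~\ref{fully invariance of verbal subgroups} says that $W(A\vW B)$ is normal in $A\vW B$, and since $A$ and $B$ are subgroups of $A\vW B$ we have $W(A)\subseteq W(A\vW B)$ and $W(B)\subseteq W(A\vW B)$ by Lemma~\ref{fullyinvarianceforwords}; hence $\overline{W(A)W(B)}^{^{A\vW B}}\subseteq W(A\vW B)$. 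For the reverse inclusion, Proposition~\ref{prod} tells us that the multiplication map $W(A)\times W(B)\to W(A\vW B)$, $(a,b)\mapsto ab$, is surjective, so every element of $W(A\vW B)$ already lies in the subgroup generated by $W(A)\cup W(B)$, a fortiori in $\overline{W(A)W(B)}^{^{A\vW B}}$. Thus $W(A\vW B)=\overline{W(A)W(B)}^{^{A\vW B}}$, and Theorem~\ref{quotient} immediately gives the isomorphism $\faktor{A\vW B}{W(A\vW B)}\cong\faktor{A}{W(A)}\vW\faktor{B}{W(B)}$.

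Next I would read off the short exact sequence~\eqref{sec}. The surjection in it is the quotient map $A\vW B\to\faktor{A\vW B}{W(A\vW B)}$ composed with the isomorphism just obtained, and its kernel is $W(A\vW B)$, which by Proposition~\ref{prod} is isomorphic to $W(A)\times W(B)$. This is precisely exactness of \eqref{sec} at all three spots.

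Finally, for the ``moreover'' clause I would trace $[A,B]^w$ through the surjection $\Phi\colon A\vW B\to\faktor{A}{W(A)}\vW\faktor{B}{W(B)}$ produced in the proof of Theorem~\ref{quotient} (with $M=W(A)$, $N=W(B)$), which satisfies $\Phi\circ q=\tilde q\circ\phi$ in the notation of that proof. Since $\phi$ is surjective, $\phi([A,B])=[\phi(A),\phi(B)]=\bigl[\faktor{A}{W(A)},\faktor{B}{W(B)}\bigr]$, whence $\Phi([A,B]^w)=\tilde q(\phi([A,B]))=\bigl[\faktor{A}{W(A)},\faktor{B}{W(B)}\bigr]^w$. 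It remains to see that $\Phi$ is injective on $[A,B]^w$: its kernel there is $[A,B]^w\cap\ker\Phi=[A,B]^w\cap W(A\vW B)$, which is trivial by Proposition~\ref{trivial intersection}. Hence $\Phi$ restricts to an isomorphism $[A,B]^w\cong\bigl[\faktor{A}{W(A)},\faktor{B}{W(B)}\bigr]^w$, completing the proof. The whole argument is essentially bookkeeping on top of Theorem~\ref{quotient}; the only slightly delicate step is the equality $W(A\vW B)=\overline{W(A)W(B)}^{^{A\vW B}}$, where full invariance of verbal subgroups (Corollary~\ref{fully invariance of verbal subgroups}) and the surjectivity half of Proposition~\ref{prod} are actually used.
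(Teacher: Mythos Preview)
Your proof is correct and follows essentially the same approach as the paper: both apply Theorem~\ref{quotient} with $M=W(A)$, $N=W(B)$, identify the normal closure $\overline{W(A)W(B)}^{^{A\vW B}}$ with $W(A\vW B)$ via Proposition~\ref{prod} and normality of verbal subgroups, and then use Proposition~\ref{trivial intersection} for injectivity of $\Phi$ on $[A,B]^w$. Your treatment of the normal-closure identification is in fact spelled out a bit more carefully than the paper's, but the ingredients and the overall line are the same.
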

\begin{proof} By Proposition \ref{prod},  $W(A)\times W(B)$ is isomorphic to  $W(A)W(B)$, where we regard  $W(A)W(B)$ as a subgroup of $A\vW B$ via the isomorphism $\Psi$, and this is equal to $W( A \vW B)$.
Since $W( A \vW B)$  is a verbal subgroup of  $A\vW B$, it is normal in $A\vW B$. Hence, if in Theorem \ref{quotient} we take $M=W(A)$ and $N=W(B)$, we get that $\ker(\Phi)=W(A\vW B)$ and then $ \faktor{A \vW B}{W(A \vW B)} \cong \faktor{A}{W(A)} \vW \faktor{B}{W(B)}$.

Moreover, the commutative diagram in the proof of Theorem \ref{quotient}, gives that 
\begin{equation*}\Phi([A,B]^w)=\Big[\faktor{A}{W(A)}, \faktor{B}{W(B)}\Big]^w.
\end{equation*} 
By Proposition \ref{trivial intersection} $W(A \vW B) \cap [A,B]^{w} = \{1\}$.  It follows that 
$\Phi|_{[A,B]^{w}}$ is injective.
\end{proof}
The next result generalizes \cite[Corollary 2.18]{SAS18}, where it was done in the case of the second nilpotent product, but by different methods.
\begin{corollary}  Let $A$ and $B$ be groups.  Let $W\subseteq F_\infty$ be a set of words. If $W(A)=A$, it follows that
$A\vW B$ is isomorphic to $A\times B$.
\end{corollary}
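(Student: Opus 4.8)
The plan is to show that the hypothesis $W(A)=A$ forces every element of $A$ to commute with every element of $B$ inside $A\vW B$; this collapses the cartesian part $[A,B]^w$ to the trivial group, and the conclusion then drops out of the normal form provided by Theorem \ref{esunica}.

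First I would apply Lemma \ref{commute with word in A}: for every $a\in W(A)\subseteq A\vW B$ one has $[a,B]=1$ in $A\vW B$. Since by hypothesis $W(A)=A$, this says precisely that every $a\in A$ commutes with every $b\in B$ inside $A\vW B$. Next I would use this to conclude that $[A,B]^w=\{1\}$: by definition $[A,B]$ is the subgroup of $A\ast B$ generated by the commutators $[a,b]$ with $a\in A$, $b\in B$, and $[A,B]^w$ is the image of $[A,B]$ under the quotient map $q\colon A\ast B\to A\vW B$; hence $[A,B]^w$ is generated by the elements $q([a,b])=[a,b]$ computed in $A\vW B$, each of which is trivial by the previous step. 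Therefore $[A,B]^w=\{1\}$.

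Finally I would invoke Theorem \ref{esunica}. Every $y\in A\vW B$ has a unique writing $y=abu$ with $a\in A$, $b\in B$, $u\in[A,B]^w=\{1\}$, so $y=ab$ with $a,b$ uniquely determined; moreover if $y=ab$ and $y'=a'b'$, the same theorem gives $yy'=aa'bb'\tilde u$ with $\tilde u\in[A,B]^w=\{1\}$, i.e. $yy'=aa'bb'$. Consequently the map $A\oplus B\to A\vW B$ sending $(a,b)\mapsto ab$ is a well-defined group homomorphism (using that $b$ and $a'$ commute to rewrite $(ab)(a'b')=aa'bb'$), and it is bijective by the existence and uniqueness clauses of Theorem \ref{esunica}. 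Hence it is the desired isomorphism.

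I do not expect a genuine obstacle here: the only real point is the observation that the hypothesis $W(A)=A$, combined with Lemma \ref{commute with word in A}, makes $A$ centralize $B$ in the verbal product; everything after that is routine bookkeeping with the normal form and the definition of $[A,B]^w$. If one preferred, the same conclusion could instead be extracted from the short exact sequence \eqref{sec} of Corollary \ref{coro on short exact sequence} together with the isomorphism $[A,B]^w\cong[\faktor{A}{W(A)},\faktor{B}{W(B)}]^w=\{1\}$, but the direct argument above is shorter.
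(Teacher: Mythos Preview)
Your proof is correct. The paper's own argument is even terser: it invokes Corollary~\ref{coro on short exact sequence}, which provides the isomorphism $[A,B]^w\cong\left[\faktor{A}{W(A)},\faktor{B}{W(B)}\right]^w$; since $W(A)=A$ makes $\faktor{A}{W(A)}$ trivial, the right-hand side is $\{1\}$, whence $[A,B]^w=\{1\}$, and the conclusion follows from Theorem~\ref{esunica}. You reach the same intermediate conclusion $[A,B]^w=\{1\}$ by a more hands-on route, going straight to Lemma~\ref{commute with word in A} to see that the generators $[a,b]$ of $[A,B]^w$ already vanish. Your approach is arguably more elementary (it avoids the short exact sequence and the auxiliary isomorphism of cartesian subgroups), while the paper's one-line appeal to Corollary~\ref{coro on short exact sequence} packages the computation more compactly once that corollary is in hand. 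You in fact anticipate exactly this alternative in your final paragraph.
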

\begin{proof}
By Corollary \ref{coro on short exact sequence},  $[A,B]^w=\{1\}$. The result then follows from Theorem \ref{esunica}.
\end{proof}

We end this section with some examples of verbal subgroups and of verbal products of groups, (see, also, \cite[$\S$5]{MOR56}).
\begin{examples}[of verbal subgroups] \label{examples of verbal groups} Given a group $G$, the verbal subgroup given by
\begin{enumerate}[label=(\roman*)]
\item the empty word is the identity of $G$;
\item \label{item1examples} the word $n_1:=[x_2,x_1]$  is  the commutator subgroup of $G$;
\item \label{item2examples} the words  $n_k:=[x_{k+1},n_{k-1}]$ with $k\in \mathbb N_{\geq 2}$ recursively yield the lower central series of $G$;
\item \label{item3examples} the words
\begin{align*}
s_1(x_1,x_2)&:=[x_1,x_2],\\
  s_k(x_1,\ldots,x_{2^k})
& :=
[s_{k-1}(x_{1},\ldots,x_{2^{k-1}}),s_{k-1}(x_{2^{k-1}+1},\ldots,x_{2^{k}})],
\end{align*}
recursively yield the derived series of $G$;
\item \label{item4examples} the word $x_1^{k}$ gives the $k$-Burnside's subgroup, namely, the group generated by the  $k^{\rm{th}}$ power of elements of $G$.
\end{enumerate}
\end{examples}

\begin{examples} [of verbal products]\label{examples of products} The words in the examples \ref{examples of verbal groups} give the following verbal products.
\begin{enumerate}[label=(\roman*)]
    \item If $W=\{\}$, (\text{the empty word}), then $A\vW B = A\ast B$;
    \item \label{item1examples product}if $W=\{n_1\}$, with $n_1$ defined in \ref{examples of verbal groups}\ref{item1examples}, then $A \vW B = A \times B$;
    \item  \label{item2examples product} if $W=\{n_k\}$, with $n_k$ defined in \ref{examples of verbal groups}\ref{item2examples}, then $A \vW B$ coincides with the $k$-nilpotent product of groups, $A \vk B$, first studied by Golovin; 
    \item  \label{item3examples product}if $W=\{s_k\}$, with $s_k$ as in \ref{examples of verbal groups}\ref{item3examples}, then $A\vW B$
    is called the $k$-solvable product;
    \item \label{item4examples product} if $W = \{x_{1}^k\}$, the product is called the  $k$-Burnside product.
\end{enumerate}
\end{examples}

Recall that given a variety of groups $\mathfrak B$ with laws $W\subseteq F_\infty$, 
{\it the free group of rank $n$ in the variety } $\mathfrak B$ is the group $F_n/W(F_n)$ (see, for instance, \cite[Chapter 1.4]{NEU69}).
Observe that if $W(A)=W(B)=1$ then 
by Corollary \ref{When W(A) is trivial} we have that $A\vW B=A\ast B/W(A\ast B)$. 
\begin{examples} \label{concrete examples of products}  
We present several verbal products as free groups in varieties of groups.
\begin{enumerate}[label=(\roman*)]
    \item If $W=\{n_k\}$, with $n_k$ defined in \ref{examples of verbal groups}\ref{item2examples}, then $\Z \vW \Z$ is
    the free nilpotent group of class $k$ and rank $2$;
    \item If $W=\{s_k\}$, with $s_k$ as in \ref{examples of verbal groups}\ref{item3examples}, then $\Z\vW \Z$
    is the free solvable group of class $k$ and rank $2$;
    \item \label{item burnside concrete examples of products}   if $W = \{x_{1}^k\}$, then $\Z/k\Z \vW \Z/k\Z=\frac{\Z/k\Z\, \ast\, \Z/k\Z}{W(\Z/k\Z\, \ast\, \Z/k\Z)}\cong F_2/W(F_2)$ is
    the free Burnside group of class $k$ and rank $2$, namely $B(2,k)$.
\end{enumerate}
\end{examples}

\section{Permanence properties of verbal products of two groups}\label{section 3}
With the tools developed in the previous section, here we will prove Theorems \ref{theorem 1 intro}, \ref{theorem 3 intro}, and \ref{theorem 3B intro} from the introduction.
Before doing that, we first recall that
in \cite[Theorem 6.11]{GOL56NILP} Golovin showed that the $k$-nilpotent product of nilpotent groups is nilpotent, 
while in \cite[Theorem 9.2, Theorem 10.2]{MOR58} Moran showed the analogous results for solvable and Burnside products.
These results are straightforward corollaries of Proposition \ref{prod}. 
We record a more precise description in the next proposition.
\begin{proposition} \label{preserves basic}
Let $A$ and $B$ be groups. Let $W\subseteq F_{\infty}$ be a set of words.
\begin{enumerate} 
\item \label{preserves nil} 
If $A$ and $B$ are nilpotent of orders $m$ and $n$ respectively, and $W=\{n_k\}$ as in example \ref{examples of products}\ref {item2examples product},
Then $A\vW B$ is a nilpotent group of order at most $\max\{k,m,n\}$.

\item \label{preserves sol} 
If $A$ and $B$ are solvable of derived length $m$ and $n$ respectively, and $W=\{s_k\}$ as in example \ref{examples of products}\ref {item3examples product},
Then $A\vW B$ is a solvable group of derived length at most $\max\{k,m,n\}$.
\item \label{preserves Burnside}
If $A$ and $B$ have exponents $m$ and $n$ respectively, and  $W=\{x_1^k\}$ as in example \ref{examples of products}\ref {item4examples product},
then $A\vW B$ has exponent  ${\rm LCM}(m,n,k)$.
\end{enumerate}
\end{proposition}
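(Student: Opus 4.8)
The plan is to obtain all three statements as consequences of Proposition~\ref{prod}, the main additional inputs being Lemma~\ref{commute with word in A} and Theorem~\ref{esunica}. Throughout, let $W$ be the relevant one-word set from Example~\ref{examples of products}, so that for a group $G$ the verbal subgroup $W(G)$ is the term $\gamma_{k+1}(G)$ of the lower central series in the nilpotent case, the $k$-th derived subgroup $G^{(k)}$ in the solvable case, and the Burnside subgroup $G^{k}$ in the Burnside case. The uniform observation is that Proposition~\ref{prod} realizes $W(A\vW B)$ as the internal product $W(A)W(B)$, in which the two factors commute elementwise --- since $[W(A),B]=1$ by Lemma~\ref{commute with word in A}, hence $[W(A),W(B)]=1$, and symmetrically --- and intersect trivially; moreover each of $W(A)$, $W(B)$, being normal in its own factor (Corollary~\ref{fully invariance of verbal subgroups}) and centralized by the other (Lemma~\ref{commute with word in A}), is normal in $A\vW B$.

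The solvable case then drops out at once. Proposition~\ref{prod} gives $(A\vW B)^{(k)}\cong A^{(k)}\times B^{(k)}$, and since the derived series satisfies $G^{(k+j)}=(G^{(k)})^{(j)}$ and commutes with finite direct products, $(A\vW B)^{(k+j)}\cong A^{(k+j)}\times B^{(k+j)}$ for all $j\ge 0$. Taking $c:=\max\{k,m,n\}$ and $j:=c-k$ gives $(A\vW B)^{(c)}\cong A^{(c)}\times B^{(c)}=\{1\}$, because $c\ge m$ and $c\ge n$; hence $A\vW B$ has derived length at most $c$.

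For the Burnside case, fix $y\in A\vW B$ and write $y=abu$ with $a\in A$, $b\in B$, $u\in[A,B]^{w}$ as in Theorem~\ref{esunica}. Iterating the multiplication rule of Theorem~\ref{esunica} gives $y^{j}=a^{j}b^{j}u_{j}$ with $u_{j}\in[A,B]^{w}$ for every $j$. On the other hand $y^{k}$ lies in $(A\vW B)^{k}=W(A\vW B)=W(A)W(B)\subseteq AB$, and every element of $AB$ has trivial $[A,B]^{w}$-component in its normal form; so the uniqueness in Theorem~\ref{esunica} forces $u_{k}=1$, so $y^{k}=a^{k}b^{k}$ with $a^{k}\in A$ and $b^{k}\in B$ commuting. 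Writing $\ell:=\mathrm{LCM}(m,n,k)$ and using $k\mid\ell$, we get $y^{\ell}=(a^{k}b^{k})^{\ell/k}=a^{\ell}b^{\ell}=1$, the last equality because $m\mid\ell$ and $n\mid\ell$. Hence every element of $A\vW B$ has order dividing $\mathrm{LCM}(m,n,k)$.

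The nilpotent case is the one that genuinely needs an induction, and I expect its inductive step --- keeping track of normality, and of how the commutator $[\gamma_{k+j}(A)\gamma_{k+j}(B),A\vW B]$ distributes --- to be the only real obstacle; note that being nilpotent-by-nilpotent, which is all one gets directly from the short exact sequence $1\to W(A)\times W(B)\to A\vW B\to (A/W(A))\vW(B/W(B))\to 1$ of Corollary~\ref{coro on short exact sequence}, does not by itself imply nilpotency. Write $G=A\vW B$. I would prove by induction on $j\ge 1$ that $\gamma_{k+j}(G)=\gamma_{k+j}(A)\gamma_{k+j}(B)$ with both factors normal in $G$; the base case $j=1$ is Proposition~\ref{prod} together with the first paragraph. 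For the step, $\gamma_{k+j+1}(G)=[\gamma_{k+j}(G),G]=[\gamma_{k+j}(A)\gamma_{k+j}(B),\langle A\cup B\rangle]$; expanding with standard commutator identities, the cross-terms $[\gamma_{k+j}(A),B]$ and $[\gamma_{k+j}(B),A]$ vanish by Lemma~\ref{commute with word in A} (these subgroups lie inside $W(A)$, resp. $W(B)$), while the surviving terms $[\gamma_{k+j}(A),A]$ and $[\gamma_{k+j}(B),B]$ yield $\gamma_{k+j+1}(A)\gamma_{k+j+1}(B)$; and $\gamma_{k+j+1}(A)\subseteq W(A)$, being normal in $A$ and centralized by $B$, is again normal in $G$, and likewise for the $B$-factor, which closes the induction. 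Finally, for $c:=\max\{k,m,n\}$ the claim with $j=c+1-k\ge 1$ gives $\gamma_{c+1}(G)=\gamma_{c+1}(A)\gamma_{c+1}(B)=\{1\}$, since $c\ge m$ and $c\ge n$ force $\gamma_{c+1}(A)=\gamma_{c+1}(B)=\{1\}$; hence $A\vW B$ is nilpotent of class at most $\max\{k,m,n\}$.
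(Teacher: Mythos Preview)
Your proof is correct and follows precisely the approach the paper indicates: the paper does not give a proof of this proposition at all, stating only that ``these results are straightforward corollaries of Proposition~\ref{prod}'' and citing Golovin and Moran for the original statements. You have supplied the details the paper omits, using exactly the intended ingredient (Proposition~\ref{prod}) together with Lemma~\ref{commute with word in A} and Theorem~\ref{esunica}; in particular your inductive argument for the nilpotent case---showing $\gamma_{k+j}(A\vW B)=\gamma_{k+j}(A)\gamma_{k+j}(B)$ by exploiting that $\gamma_{k+j}(A)\subseteq W(A)$ is centralized by $B$ and by $[A,B]^{w}$---is a clean way to make the ``straightforward'' claim precise, and your observation that nilpotent-by-nilpotent alone would not suffice is well taken. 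One minor remark: in part~(3) your argument shows the exponent \emph{divides} $\mathrm{LCM}(m,n,k)$, which is the correct statement and all that is used later (the literal equality can fail, e.g.\ when one factor is trivial).
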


As it was mentioned in the introduction, the second named author showed that the second nilpotent products of groups preserves several  group theoretical properties that are of interest in representation theory, dynamics of group actions and operator algebras.
The key fact in most of the proofs given in \cite[Proposition 3.1]{SAS18} was the short exact sequence 
\begin{equation} \label{Sec 2 nil}
1 \to \nicefrac{A}{[A,A]}\otimes \nicefrac{B}{[B,B]} \to A \stackrel{_2}{\ast} B \to A \times B \to 1.
\end{equation}
This exact sequence even allowed to compute the order of $ A \stackrel{_2}{\ast} B$. However, the exact sequence relied on some unique features of the second nilpotent product that are not present in other verbal products.

For the sake of clarity and for future reference, we state the next more precise variant of Theorems \ref{theorem 1 intro} and \ref{theorem 3 intro}.
\begin{proposition}\label{StabProp}
Let $A$ and $B$ be countable, discrete, groups. Let $W\subseteq F_\infty$ be a set of words as in examples \ref{examples of products}\ref {item2examples product} and  \ref{examples of products}\ref {item3examples product}  that define the nilpotent and solvable products. Then $A \vW B$ has one of the following properties:
\begin{enumerate}[label=(\arabic*)]
\item\label{sofic} sofic;
\item\label{hyper} hyperlinear;
\item\label{weak sofic} weak sofic;
\item\label{linear sofic} linear sofic;
\item\label{amen} amenable;
\item\label{haag} Haagerup approximation property;
\item \label{exact} exact (or boundary amenable, or satisfies property A of Yu);
\end{enumerate}
if and only if $A$ and $B$ have the same property.
\end{proposition}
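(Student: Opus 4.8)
The plan is to derive the statement from the short exact sequence of Corollary~\ref{coro on short exact sequence}, together with the fact that each of the properties \ref{sofic}--\ref{exact} is inherited by subgroups, by direct products of two groups, and by extensions with amenable quotient. The ``only if'' direction is immediate: by the discussion following Definition~\ref{prodverbal} the homomorphisms $q|_{A}\colon A\to A\vW B$ and $q|_{B}\colon B\to A\vW B$ are injective, so $A$ and $B$ are (isomorphic to) subgroups of $A\vW B$, and all of \ref{sofic}--\ref{exact} pass to subgroups.

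For the ``if'' direction, assume $A$ and $B$ both have the property $\mathcal P$ under consideration, and apply Corollary~\ref{coro on short exact sequence} with $M=W(A)$, $N=W(B)$ to obtain the extension
$$1\longrightarrow W(A)\times W(B)\longrightarrow A\vW B\longrightarrow \faktor{A}{W(A)}\vW\faktor{B}{W(B)}\longrightarrow 1.$$
The kernel $W(A)\times W(B)$ is a subgroup of $A\times B$ and hence has $\mathcal P$, since $\mathcal P$ is closed under subgroups and under direct products of two groups. The point of the argument is that the quotient is \emph{amenable}. Indeed, if $W=\{n_k\}$, then by Example~\ref{examples of verbal groups}\ref{item2examples} the subgroup $W(A)$ is a term of the lower central series of $A$, so $\faktor{A}{W(A)}$ and $\faktor{B}{W(B)}$ are nilpotent of class at most $k$, and by Example~\ref{examples of products}\ref{item2examples product} and Proposition~\ref{preserves basic}\ref{preserves nil} the quotient $\faktor{A}{W(A)}\vW\faktor{B}{W(B)}$, being the $k$-nilpotent product of two such groups, is nilpotent. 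Likewise, if $W=\{s_k\}$, then Example~\ref{examples of verbal groups}\ref{item3examples} identifies $W(A)$ with a term of the derived series of $A$, so $\faktor{A}{W(A)}$ and $\faktor{B}{W(B)}$ are solvable of derived length at most $k$, and by Example~\ref{examples of products}\ref{item3examples product} and Proposition~\ref{preserves basic}\ref{preserves sol} the quotient $\faktor{A}{W(A)}\vW\faktor{B}{W(B)}$ is their $k$-solvable product, hence solvable. Either way the quotient is amenable, so $A\vW B$ is an extension of a group with property $\mathcal P$ by an amenable group.

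It remains to recall, for each $\mathcal P$, that it is inherited by such extensions: this is trivial for amenability; it is a theorem of Elek--Szabó for soficity, and the analogues hold for hyperlinearity, weak soficity and linear soficity; exactness is stable under arbitrary extensions while amenable groups are exact; and an extension of a group with the Haagerup property by an amenable group again has the Haagerup property. Applying the relevant one of these facts finishes the proof. I expect the one point requiring genuine care to be the Haagerup case, and more precisely the necessity of realizing the amenable group as the \emph{quotient} rather than the kernel of the relevant extension: the Haagerup property fails to be stable under extensions with amenable kernel (as $\Z^{2}\rtimes\SL_{2}(\Z)$ shows), so one cannot instead invoke the more elementary sequence $1\to[A,B]^{w}\to A\vW B\to A\oplus B\to 1$ of Theorem~\ref{esunica} — whose kernel $[A,B]^{w}$ is amenable, being nilpotent (resp.\ solvable) for these $W$ — even though that sequence would dispose of every property in \ref{sofic}--\ref{exact} \emph{except} the Haagerup one. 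Routing everything instead through Corollary~\ref{coro on short exact sequence}, fueled by Golovin's and Moran's nilpotency and solvability theorems as recorded in Proposition~\ref{preserves basic}, handles all seven cases uniformly.
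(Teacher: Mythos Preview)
Your main argument is correct and essentially identical to the paper's: both directions rest on subgroups for the ``only if'' part, and on the short exact sequence of Corollary~\ref{coro on short exact sequence} together with Proposition~\ref{preserves basic} (making the quotient nilpotent or solvable, hence amenable) for the ``if'' part, followed by the standard $\mathcal P$-by-amenable permanence results.

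One correction to your closing commentary, however: your claim that the alternative sequence $1\to[A,B]^{w}\to A\vW B\to A\oplus B\to 1$ ``would dispose of every property in \ref{sofic}--\ref{exact} except the Haagerup one'' is not right. That sequence has \emph{amenable kernel} and quotient with property~$\mathcal P$, so to use it for soficity one would need to know that amenable-by-sofic extensions are sofic. This is an open problem---indeed the paper's Remark immediately following the proof makes exactly this point (even abelian-by-sofic is unknown), and cites it as the original motivation for routing the argument through Corollary~\ref{coro on short exact sequence} instead. The same obstruction applies to hyperlinearity, weak soficity, and linear soficity. So the Haagerup property is not the only case forcing this choice of extension; it is the metric-approximation properties \ref{sofic}--\ref{linear sofic} that do.
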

\begin{proof}
We refer  to \cite{BEK08,BRO08,CHER01,MR2460675,MR3592512,MR2455513} for the definitions and thorough treatments of the properties of groups stated above, (see also section $\S$\ref{section 5} for a brief discussion on each of the first four items).
 Since $A$ and $B$ are subgroups of $A \vW B$ and the properties  from \ref{sofic}
 to \ref{exact} are inherited by subgroups, it follows that if $A \vW B$ satisfies one of these seven properties, then both $A$ and $B$ must also satisfy it. 
We are now left to show the reverse implications.
To that end, we consider the short exact sequence
\begin{equation*}
 1 \to W(A)\times W(B) \to A \vW B \to   \faktor{A}{W(A)} \vW \faktor{B}{W(B)}\to 1.
\end{equation*}

Observe that  $\faktor{A}{W(A)}$ and  $\faktor{B}{W(B)}$ are nilpotent when $W=\{n_k\}$ and solvable when  $W=\{s_k\}$. Hence by Proposition  \ref{preserves basic}, $\faktor{A}{W(A)} \vW \faktor{B}{W(B)}$ is nilpotent when $W=\{n_k\}$ and solvable when
 $W=\{s_k\}$. It follows that in both cases, $\faktor{A}{W(A)} \vW \faktor{B}{W(B)}$ is an amenable group.

In order to prove  \ref{sofic}, recall that that $W(A)\times W(B)$ is  sofic, since it is a product of subgroups of sofic groups. Then, $A \vW B$ is a sofic-by-amenable extension and by \cite[Theorem 1]{ELE03}, it is sofic.

In order to prove  \ref{hyper}, recall that $W(A)\times W(B)$ is hyperlinear, since it is a product of subgroups of hyperlinear groups. Then, $A \vW B$ is a hyperlinear-by-amenable extension and by a suitable adaptation of \cite[Theorem 1]{ELE03}, (also see \cite[Theorem B]{MR3934790}), it is hyperlinear. The proofs of \ref{weak sofic} and \ref{linear sofic} are almost identical to this, one has to use \cite[Theorem 5.1]{MR3632893} and \cite[Theorem 9.3]{MR3592512}.

In order to prove \ref{amen}, recall that $W(A)\times W(B)$ is amenable since it is a product of subgroups of amenable groups.
Therefore $A\vW B$ is amenable because it is an extension of amenable groups (see \cite[Theorem G.2.2]{BEK08}).

In order to prove \ref{haag}, recall that the Haagerup Property is preserved by taking subgroups and finite direct products,
 thus  the group $W(A)\times W(B)$ has the Haagerup Property. Then  $A \vW B$ is a Haagerup-by-amenable extension, 
 and by \cite[Example 6.1.6]{CHER01}, it has the Haagerup Property.

In order to prove \ref{exact}, recall that amenable groups are exact, and that subgroups and extensions of exact groups are exact \cite[Proposition 5.1.11]{BRO08}.
\end{proof}

\begin{remark}
In \cite{SAS18}, it was proved that the second nilpotent product of groups satisfies items \ref{amen}, \ref{haag} and \ref{exact} of Proposition \ref{StabProp}.
However, since it is not known whether  abelian-by-sofic extensions are sofic, the short exact sequence  \eqref{Sec 2 nil} can not be used to show that the second nilpotent product of groups satisfies \ref{sofic}. Addressing this issue was one of the first motivations to carry on the work  presented in this article.
\end{remark}
\begin{proposition}\label{StabProp T}
Let $A$ and $B$ be countable groups. Let $W=\{n_k\}$ where $n_k$ is as in example \ref{examples of products}\ref {item2examples product}. Then $A \vW B$ has Kazhdan's property (T) if and only if $A$ and $B$ have it.
\end{proposition}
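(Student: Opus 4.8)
The plan is to run the same kind of argument as in Proposition~\ref{StabProp}, but with the key difference that the relevant quotient now turns out to be \emph{finite}, so that a finite-index argument replaces the ``(T)-by-amenable'' step (which would be false: amenable-by-(T) or (T)-by-amenable extensions need not have (T)). Concretely, I would use the short exact sequence
\[
1 \to W(A)\times W(B) \to A \vW B \to \faktor{A}{W(A)} \vW \faktor{B}{W(B)} \to 1
\]
from Corollary~\ref{coro on short exact sequence}, together with the standard permanence properties of Kazhdan's property (T) (see \cite[Chapter~1]{BEK08}): it passes to quotients; it is inherited by finite-index subgroups and conversely a group containing a finite-index subgroup with (T) has (T); a finite direct product of groups with (T) has (T); and a discrete amenable group with (T) is finite.

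The ``only if'' direction is immediate: Theorem~\ref{quotient} with $M=\{1\}$ and $N=B$ identifies $A$ with the quotient $\faktor{A\vW B}{\overline{B}^{^{A \vW B}}}$ of $A\vW B$, and symmetrically for $B$, so (T) descends to both factors. For the ``if'' direction, assume $A$ and $B$ have (T). First I would observe that, since $W=\{n_k\}$ realizes the lower central series (example~\ref{examples of products}\ref{item2examples product}), the groups $\faktor{A}{W(A)}$ and $\faktor{B}{W(B)}$ are nilpotent, hence amenable, and being quotients of $A$ and $B$ they also have (T); therefore they are finite. Then $W(A)$ and $W(B)$ have finite index in $A$ and $B$, so they have (T), and hence so does $W(A)\times W(B)$. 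Finally, by Corollary~\ref{coro on short exact sequence} this group is isomorphic to the verbal subgroup $W(A\vW B)\trianglelefteq A\vW B$, whose quotient is $\faktor{A}{W(A)}\vW\faktor{B}{W(B)}$ --- a nilpotent product of two finite groups, hence finite by Golovin's theorem \cite{GOL56NILP}. Thus $W(A\vW B)$ has finite index in $A\vW B$ and has (T), so $A\vW B$ has (T).

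I do not anticipate a real obstacle here: once the exact sequence of Corollary~\ref{coro on short exact sequence} is available, the proof is a formal assembly of classical facts about property (T), the only external input being the finiteness of nilpotent products of finite groups. The point that needed care --- and that makes the argument uniform over all $k$ rather than only $k=2$ as in \cite{SAS18} --- is the structural identification, carried out in the previous section, of $W(A\vW B)$ with $W(A)\times W(B)$ and of the corresponding quotient with $\faktor{A}{W(A)}\vW\faktor{B}{W(B)}$; with that in place, the delicate non-amenable situation is handled simply by noting that this quotient is finite.
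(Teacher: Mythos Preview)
Your proposal is correct and follows essentially the same route as the paper's proof: both directions use the short exact sequence of Corollary~\ref{coro on short exact sequence}, the fact that $\faktor{A}{W(A)}$ and $\faktor{B}{W(B)}$ are finite (nilpotent quotients of (T) groups), and Golovin's finiteness theorem for nilpotent products of finite groups. The only cosmetic difference is in the last step: you conclude via ``finite-index subgroup with (T) $\Rightarrow$ (T)'', whereas the paper invokes the general extension result that (T)-by-(T) implies (T) \cite[Theorem~1.7.6]{BEK08}; both are standard and equivalent here since the quotient is finite.
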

\begin{proof}
By the discussion ensuing Theorem \ref{esunica}, (or by Theorem \ref{quotient}), we have that 
$\nicefrac{A\vW B}{\overline{B}^{^{A \vW B}}} \cong  A$
 and $\nicefrac{A\vW B}{\overline{A}^{^{A \vW B}}} \cong  B$.
Since Property (T) is inherited by quotients (see, for instance,  \cite[Theorem 1.3.4]{BEK08}) it follows that if $A\vW B$ has property (T), then $A$ and $B$ have property (T).

For the reverse implication, suppose that $A$ and $B$ have property (T). Then the quotients $\nicefrac{A}{W(A)}$ and $\nicefrac{B}{W(B)}$ have property (T). Moreover, they are $k$-nilpotent, so they are amenable. Hence they must be finite (see \cite[Theorem 1.1.6]{BEK08}). As $k$-nilpotent products between two finite groups is a finite group (see  \cite[Theorem, $\S$ 6]{GOL56NILP}), then  $\faktor{A}{W(A)} \vW \faktor{B}{W(B)}$ is finite.
 On the other hand,  since $W(A)$ is a normal subgroup of $A$ and $\faktor{A}{W(A)}$ is finite, 
 W(A) has finite index in $A$, and then it  
 has property (T) (see, for instance, \cite[Theorem 1.7.1]{BEK08}). The same happens with $W(B)$. It follows that $W(A)\times W(B)$ has property (T). 
 Then, both ends of \eqref{sec} have property (T). Hence, by \cite[Theorem 1.7.6]{BEK08}, $A\vW B$  has property (T).
\end{proof}

We will next see that, in contrast to nilpotent products, solvable products do not preserve property  (T). The reason behind it is that, in general, solvable products of finite groups are not necessarily finite. This last fact has already been noticed by Moran in \cite[Theorem 9.4 and Corollary 9.4.1]{MOR58}.
We provide a slightly more precise description in the next proposition.
\begin{proposition}\label{commutator of abelian in solvable product}
Let $A$ and $B$ be abelian groups.
 Let $W=\{s_k\}$, with $s_k$ as in example \ref{examples of products} \ref{item3examples product}, and $k\geq 2$.
  Then $[A,B]^{w}\subseteq A\vW B$ 
is the free solvable group of derived length $k-1$ on the set $\{[a,b]: a\in A\setminus \{1\}, b\in B\setminus \{1\} \}$.
\end{proposition}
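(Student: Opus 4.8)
The plan is to show that $[A,B]^{w}$ is the free solvable group of derived length $k-1$ on the alphabet $X := \{[a,b] : a \in A \setminus\{1\},\, b \in B\setminus\{1\}\}$, by identifying $[A,B]^{w}$ with a verbal subgroup of a more manageable group, then using a presentation argument. First I would reduce to the case where $A$ and $B$ are free abelian on suitably many generators: by Corollary \ref{coro on short exact sequence}, the subgroup $[A,B]^w$ of $A\vW B$ is isomorphic to $[A/W(A), B/W(B)]^w$ inside $A/W(A) \vW B/W(B)$, and since $W = \{s_k\}$ with $k \geq 2$ the verbal subgroup $W(A)$ is the $k$-th term of the derived series, which is trivial for abelian $A$; so $A/W(A) = A$, and likewise for $B$. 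Thus the statement is really about $[A,B]^w$ with $A$, $B$ arbitrary abelian. To handle arbitrary abelian $A$, $B$ I would present them as quotients of free abelian groups $\widetilde A \twoheadrightarrow A$, $\widetilde B \twoheadrightarrow B$ and apply Theorem \ref{quotient} to transport the computation; but the cleanest route is to work directly with the free product $\mathcal F = A \ast B$ and the word $s_k$.

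The core computation: by definition $A\vW B = \mathcal F / (W(\mathcal F)\cap [A,B])$ and $[A,B]^w$ is the image of $[A,B]$. The key is that, because $A$ and $B$ are abelian, $W(\mathcal F) = s_k(\mathcal F)$ sits inside $[A,B]$ already — indeed $s_1 = [x_1,x_2]$, and Lemma \ref{ABC} (or Corollary \ref{When W(A) is trivial}, noting $W(A)=W(B)=\{1\}$ since $A,B$ are abelian and $k\geq 2$ so $s_k$ evaluates trivially on an abelian group) gives $W(\mathcal F) \subseteq [A,B]$. Hence $W(\mathcal F)\cap[A,B] = W(\mathcal F) = s_k(\mathcal F)$, and we want to understand $[A,B] / s_k(\mathcal F)$. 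Now I claim $s_k(\mathcal F) = s_{k-1}([A,B])$, i.e. the $(k-1)$-th derived subgroup of the subgroup $[A,B]$; granting this, $[A,B]^w = [A,B]/[A,B]^{(k-1)}$ is solvable of derived length exactly $k-1$ on whatever generating set $[A,B]$ has. The inclusion $s_{k-1}([A,B]) \subseteq s_k(\mathcal F)$ is clear from the recursive definition of the $s_j$. For the reverse, one uses that modulo $[A,B]'$ (the commutator subgroup of $[A,B]$, which contains $s_2(\mathcal F)$) the group $\mathcal F$ becomes metabelian-like: $\mathcal F/[A,B]'$ has abelianization $A\oplus B$ with kernel $[A,B]/[A,B]'$ abelian, so $s_1$ evaluated anywhere in $\mathcal F$ lands in $[A,B]$ and higher $s_j$ iterate the derived series of $[A,B]$.

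The last piece is identifying $[A,B]$ as a \emph{free} group in the variety of all groups on the set $X$ — i.e. $[A,B]$, as an abstract group (forgetting its embedding), is free on $X$. This is the classical fact that the cartesian subgroup $[A,B]$ of the free product $A\ast B$ of two abelian (indeed arbitrary) groups is free, with a free basis given by the commutators $[a,b]$, $a \in A\setminus\{1\}$, $b \in B\setminus\{1\}$ (a Kurosh-subgroup-theorem computation; when $A$, $B$ are cyclic this is the commutator subgroup of $\mathbb Z \ast \mathbb Z$, the free group of rank the product of orders, etc.). Combining: $[A,B]^w = [A,B]/[A,B]^{(k-1)}$ is the quotient of the free group on $X$ by its $(k-1)$-th derived subgroup, which is by definition the free solvable group of derived length $k-1$ on $X$. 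I expect the main obstacle to be the second step, namely proving $s_k(\mathcal F) = s_{k-1}([A,B])$ cleanly — one direction needs a careful induction showing that applying $s_1$ to elements of $\mathcal F$ never escapes $[A,B]$ and that the variety laws interact correctly with the subgroup $[A,B]$, which is where one must be attentive that $A$ and $B$ being abelian is genuinely used (so that $[A,A]=[B,B]=1$ and $s_k$ is supported on $[A,B]$). The freeness of $[A,B]$ on $X$, while classical, may also require citing Kurosh's subgroup theorem or an explicit Reidemeister–Schreier computation rather than anything in this excerpt.
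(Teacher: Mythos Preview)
Your approach is essentially the same as the paper's: identify $[A,B]$ as a free group on $X$, show $s_k(A\ast B)=s_{k-1}([A,B])$, and conclude that $[A,B]^w=[A,B]/s_{k-1}([A,B])$ is free solvable of derived length $k-1$. The only difference is that the step you flag as ``the main obstacle'' is handled in the paper in one stroke via the elementary identity $[A\ast B,\,A\ast B]=[A,B]$ (valid exactly because $A$ and $B$ are abelian, and cited from Serre's \emph{Trees}); once you have $(A\ast B)'=[A,B]$, the equality $s_k(A\ast B)=(A\ast B)^{(k)}=[A,B]^{(k-1)}=s_{k-1}([A,B])$ is immediate by induction, with no need for your metabelian-quotient argument.
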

\begin{proof} 
If $A$ and $B$ are groups, then 
$[A,B]$ is a free subgroup of $A\ast B$ in the generators $\{[a,b]: a\in A\setminus \{1\}, b\in B\setminus \{1\} \}$. Moreover it is normal in $A\ast B$, and
 if $A$ and $B$ are abelian, then $[A\ast B,A\ast B]=[A,B]$. (For a proof of this elementary fact, see, for instance,  \cite[Section 1.3, Proposition 4]{serreTRees}).

Then, if $A$ and $B$ are abelian, it follows by induction that for every $k\geq 2$, $s_k(A\ast B)=s_{k-1}([A,B])\subseteq [A,B]$.
Hence, by  Definition \ref{prodverbal}, 
$A \vW B = \faktor{A\ast B}{s_{k-1}([A,B])}$. Hence $[A,B]^{w}$, the projection of $[A,B]$ onto $A \vW B$, is isomorphic to  $[A,B]/{s_{k-1}([A,B])}$. 
Since $[A,B]$ is a free group, this is, by antonomasia, the free solvable group of derived length $k-1$ on the set $\{[a,b]: a\in A\setminus \{1\}, b\in B\setminus \{1\} \}$.
\end{proof}
\begin{corollary}
  Let $W=\{s_k\}$, with $s_k$ as in example \ref{examples of products} \ref{item3examples product}, and $k\geq 2$.
 Then $\nicefrac{\mathbb Z}{n\mathbb Z}\vW \nicefrac{\mathbb Z}{n\mathbb Z}$, is an infinite solvable group.
  In particular, the property (T) of Kazhdan is not preserved under taking solvable products of groups. 
\end{corollary}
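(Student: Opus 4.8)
The plan is to produce $\nicefrac{\mathbb Z}{n\mathbb Z}\vW \nicefrac{\mathbb Z}{n\mathbb Z}$ as a concrete infinite solvable group and then argue by contradiction. First, since $\nicefrac{\mathbb Z}{n\mathbb Z}$ is abelian, hence solvable of derived length $1$, Proposition \ref{preserves basic}\ref{preserves sol} shows directly that $\nicefrac{\mathbb Z}{n\mathbb Z}\vW \nicefrac{\mathbb Z}{n\mathbb Z}$ is solvable of derived length at most $\max\{k,1\}=k$; in particular it is amenable.

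Next I would check that this group is infinite (assuming, as one must, that $n\geq 2$). By Proposition \ref{commutator of abelian in solvable product}, the subgroup $[\nicefrac{\mathbb Z}{n\mathbb Z},\nicefrac{\mathbb Z}{n\mathbb Z}]^{w}$ of $\nicefrac{\mathbb Z}{n\mathbb Z}\vW \nicefrac{\mathbb Z}{n\mathbb Z}$ is the free solvable group of derived length $k-1$ on the set $\{[a,b]:a,b\in\nicefrac{\mathbb Z}{n\mathbb Z}\setminus\{1\}\}$, which is nonempty for $n\geq 2$. Since $k\geq 2$, the derived length $k-1$ is at least $1$, so the abelianization of this free solvable group is the free abelian group on that nonempty set, which is infinite. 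Hence $[\nicefrac{\mathbb Z}{n\mathbb Z},\nicefrac{\mathbb Z}{n\mathbb Z}]^{w}$, and therefore the ambient group $\nicefrac{\mathbb Z}{n\mathbb Z}\vW \nicefrac{\mathbb Z}{n\mathbb Z}$, is infinite.

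Finally, $\nicefrac{\mathbb Z}{n\mathbb Z}$ is finite, so it trivially has Kazhdan's property (T). If property (T) were preserved under taking solvable products, then $\nicefrac{\mathbb Z}{n\mathbb Z}\vW \nicefrac{\mathbb Z}{n\mathbb Z}$ would have property (T); being amenable, it would then have to be finite (see \cite[Theorem 1.1.6]{BEK08}), contradicting the previous paragraph. This contradiction gives the claim. There is no genuine obstacle here, since the substantive content is already packaged in Proposition \ref{commutator of abelian in solvable product} and Proposition \ref{preserves basic}; the only point requiring a little care is the observation that a free solvable group on a nonempty generating set is infinite, which is immediate upon passing to its abelianization.
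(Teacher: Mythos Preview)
Your proof is correct and follows exactly the route the paper intends: the corollary is stated without proof because it is immediate from Proposition \ref{commutator of abelian in solvable product} (giving infiniteness via the free solvable subgroup $[A,B]^w$) and Proposition \ref{preserves basic}\eqref{preserves sol} (giving solvability, hence amenability), combined with the standard fact that an amenable group with property (T) is finite \cite[Theorem 1.1.6]{BEK08}. Your extra remark that a free solvable group on a nonempty set is infinite because its abelianization is free abelian is precisely the missing one-line justification.
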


Since the direct sum and free product of two orderable groups is an orderable group, (see, for instance, \cite{CLA16,DNR14}), it is natural to ask if the same remains true for other verbal products. 
The following examples show that this is not the case.
\begin{example}
Let $G=\pi_1(\text{Klein bottle})\cong \langle a,b: aba^{-1}b=1\rangle$. It is easy to show that it is a left-orderable group but it is not bi-orderable, (see, for instance, \cite[Example 1.9]{CLA16} or \cite[$\S$1.3.4]{DNR14}).
Its abelianization is $\rm{H_1}(\text{Klein bottle})\cong \Z\times\frac{\Z}{2\Z}$.
Hence, the group $G\stackrel{_2}{\ast} G$ is not left-orderable, since, by the exact sequence \eqref{Sec 2 nil}, it contains a subgroup isomorphic to $\big(\Z\times\frac{\Z}{2\Z}\big)\otimes \big(\Z\times\frac{\Z}{2\Z}\big)$, thus it has torsion.
One can show that for any $k\geq 2$, the $k$-nilpotent product $G\stackrel{_k}{\ast} G$ is not left-orderable.
\end{example}
Torsion free nilpotent groups are bi-orderable, (see, for instance, \cite[$\S$1.2.1]{DNR14}). One might ask if nilpotent products of torsion free nilpotent groups are bi-orderable. In light of Proposition \ref{preserves basic} \eqref{preserves nil}, this is equivalent to ask if nilpotent products of torsion free nilpotent groups are torsion free. The answer is no.
The following example is taken from \cite[$\S$1.2.2]{DNR14}.

\begin{example}
Let
$G=\Bigg\{\begin{pmatrix} 1 && 2a &&c\\0 &&1 && 2b\\0&& 0&& 1\end{pmatrix}:\,\,a,b,c\in\mathbb{Z} \Bigg\}$. It is a torsion free group and nilpotent of class $2$.
Its commutator subgroup is $[G,G]=\Bigg\{\begin{pmatrix} 1 && 0 &&4c\\0 &&1 && 0\\0&& 0&& 1\end{pmatrix}:\,\,c\in\mathbb{Z} \Bigg\}$. Its abelianization is isomorphic to 
$\Z^2\times\frac{\Z}{4\Z}$, so it has torsion. Hence, by the exact sequence \eqref{Sec 2 nil}, the $2$-nil group $G\stackrel{_2}{\ast} G$ contains a subgroup isomorphic to
 $\big(\Z^2\times\frac{\Z}{4\Z}\big)\otimes \big(\Z^2\times\frac{\Z}{4\Z}\big)$, and hence it has torsion. One can show that for any $k\geq 2$, the 
 $k$-nilpotent product $G\stackrel{_k}{\ast} G$ has torsion.
 \end{example}
 
 A question remains: is it true that solvable products of torsion free solvable groups are bi-orderable? 
 Perhaps, in light of Proposition \ref{commutator of abelian in solvable product}, the answer is yes.
 
  \begin{remark}A similar study could be attempted to do for Burnside products. Here the situation is far more delicate.  Adyan proved in \cite{MR682486} that the free Burnside groups $B(m, n)$ are non amenable whenever $m \geq 2$, and $n\geq 665$ odd. Hence, using example \ref{concrete examples of products}\ref{item burnside concrete examples of products} 
  when $W=\{x^{665}\}$, we have that
    $\Z/665\Z\vW \Z/665\Z\cong B(2,665)$, so Burnside products do not preserve amenability. However, 
  Burnside, Sanov and Hall showed that $B(m,n)$ is finite when $n=2,3,4,6$. Then, for $k$-Burnside products with $k=2,3,4,6$ one could get results like the one stated above by means of the same  techniques employed in this article. Indeed,  if $A$ and $B$ are finitely generated, then  Proposition \ref{preserves basic}\eqref{preserves Burnside}
  implies that when $W=\{x_1^k\}$, the group $\faktor{A}{W(A)} \vW \faktor{B}{W(B)}$ is a finitely generated and of exponent $k$. Hence it is finite when $k=2,3,4,6$. Then exactly the same proofs of Proposition \ref{StabProp} and Proposition \ref{StabProp T} serve to show 
  Theorem \ref{theorem 3B intro} from the introduction.
\end{remark}

\section{Verbal products of arbitrarily many groups}

The purpose of this section is to prove Corollary \ref{amenability of many products}, and more importantly, to set up the premises needed to prove
 the results regarding restricted verbal wreath products  discussed in the introduction. To that end, we start with two definitions.
 \begin{definition}\label{mixed commutator}
Let $\{G_i\}_{i\in\mathcal I}$ be a family of subgroups of a group $\mathcal G$. the symbol $[G_i]^{\mathcal G}$ 
denotes the normal subgroup of $\mathcal G$ generated 
by the elements of the form $[g_i,g_j]$ with $g_i\in G_i, g_j\in G_j$ with  $i\neq j$.
\end{definition}
In the case when $\mathcal G$ equals the free product of the groups, $[G_i]^{\mathcal G}$ is called the cartesian subgroup of $\mathcal {G}$, \cite[18.17]{NEU69}. When all the $G_i$ are equal to a fixed group $G$, we will just write $[G]^{\mathcal G}$.

\begin{definition}\cite[$\S$4]{MOR56}
\label{prodverbal of many groups}
Let  $\{G_i\}_{i \in \mathcal I}$ be  a family of groups indexed on a set $\mathcal I$ and consider $\mathcal F:=\COMM_{i \in \mathcal I} G_i$ the free product of the family. Let $W\subseteq F_\infty$ be a set of words and let $W(\mathcal F)$ be the corresponding verbal subgroup of $\mathcal F$.
The verbal product of the family is the quotient group
\begin{equation*}\2W {i \in \mathcal I} G_i :=  \faktor{ \mathcal F}{W(\mathcal F)\cap [G_i]^\mathcal F}\end{equation*}
\end{definition}
As in section $\S$\ref{section 2}, the letter $q$ will denote the quotient homomorphism.
The proof of the next  generalization of Theorem  \ref{esunica} is left to the reader, (see, for instance, \cite[Ch. II]{GOL56NILP}),
\begin{theorem} \label{unique writing in products of several factors}
 Let  $\{G_i\}_{i \in \mathcal I}$ be  a family of groups indexed on a totally ordered set $\mathcal I$.  Let $W\subseteq F_\infty$ be a set of words.
 Every element $y \in \2W {i \in \mathcal I} G_i$ admits a unique
 representation $y = a_{i_1}a_{i_2}...a_{i_l}\,u$,
where $a_{i_k}\in G_{i_k}$, $i_1<i_2<...<i_l\in\mathcal I$ , and
$u$ belongs to $[G_i]^{w}$, the projection of $[G_i]^\mathcal F$ onto $\2W {i \in \mathcal I} G_i $.
\end{theorem}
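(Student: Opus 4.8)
The statement is the multi-factor analogue of Theorem~\ref{esunica}, and the natural strategy is to reduce to that binary case by induction on the (finite) number of factors actually involved, together with a compactness/direct-limit argument to handle an arbitrary index set $\mathcal I$. First I would observe that any element $y\in\2W{i\in\mathcal I}G_i$ is represented by some $g\in\mathcal F=\COMM_{i\in\mathcal I}G_i$, and $g$ is a word in only finitely many of the factors, say $G_{i_1},\dots,G_{i_n}$ with $i_1<\dots<i_n$. Thus it suffices to prove the theorem for a finite family, and then check that the representation obtained is independent of the finite subset chosen (which follows because enlarging the subset only appends trivial terms $a_{i}=1$). So the crux is the finite case.

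\textbf{The inductive step.} For a finite ordered family $G_{i_1},\dots,G_{i_n}$, set $A:=G_{i_1}$ and $B:=\COMM_{k=2}^{n}G_{i_k}$, so $\mathcal F = A\ast B'$ where $B'$ is the free product defining $B$. The key point is to identify $\2W{k=1}^{n}G_{i_k}$ with the binary verbal product $A\vW\left(\2W{k=2}^{n}G_{i_k}\right)$; this is exactly the associativity of verbal products alluded to repeatedly in the introduction, and I would either invoke it or prove it directly by comparing the two normal subgroups of $A\ast B'$ being quotiented out — namely $W(\mathcal F)\cap[G_i]^{\mathcal F}$ versus $W(A\ast B)\cap[A,B]$ pulled back along $A\ast B'\to A\ast B$, using Lemma~\ref{igualdadintersec} and Theorem~\ref{quotient} to handle the passage from $B'$ to $B$. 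Granting this, Theorem~\ref{esunica} gives every $y$ a unique writing $y = a\,\beta\,u$ with $a\in A=G_{i_1}$, $\beta\in B=\2W{k=2}^{n}G_{i_k}$, and $u\in[A,B]^{w}$. The inductive hypothesis applied to $\beta$ yields $\beta = a_{i_2}\cdots a_{i_n}\,v$ with $a_{i_k}\in G_{i_k}$ and $v$ in the cartesian-type subgroup of $\2W{k=2}^{n}G_{i_k}$. Substituting and collecting, $y = a_{i_1}a_{i_2}\cdots a_{i_n}\,u'$ for a suitable $u'$; one then checks $u'$ lies in $[G_i]^{w}$ inside the full product, using that both $u$ (a commutator of $G_{i_1}$ with the rest) and the image of $v$ are of the required commutator form, and that $[A,B]^{w}$ together with the image of the cartesian subgroup of $B$ generate exactly $[G_i]^{w}$.

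\textbf{Uniqueness.} For uniqueness I would argue as in the binary case: the retractions $\2W{i\in\mathcal I}G_i\to G_{i_j}$ (killing all other factors and the cartesian subgroup, legitimate by the discussion after Theorem~\ref{esunica} / by Theorem~\ref{quotient} with $M,N$ chosen appropriately) recover each $a_{i_j}$ from $y$, and hence $u$ is determined as well. The abelianized-type quotient $\2W{i}G_i \to \bigoplus_i G_i$ shows the $a_{i_j}$ are pinned down modulo the cartesian subgroup, and then descending into it by induction pins down everything; alternatively, uniqueness transfers through the associativity isomorphism directly from the uniqueness clauses in Theorem~\ref{esunica} and the inductive hypothesis.

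\textbf{Main obstacle.} The genuinely delicate point is not the existence of the writing but making the associativity identification $\2W{k=1}^{n}G_{i_k}\cong G_{i_1}\vW\left(\2W{k=2}^{n}G_{i_k}\right)$ precise and checking that, under it, the cartesian subgroup $[G_i]^{w}$ of the left side matches the subgroup generated by $[A,B]^{w}$ and the cartesian subgroup of the smaller product — i.e.\ that the commutator bookkeeping is consistent across the recursion. Once that compatibility is established (via Lemma~\ref{igualdadintersec} and Theorem~\ref{quotient}, exactly the machinery assembled in $\S\ref{section 2}$), the rest is a routine collection argument, which is why the authors feel comfortable leaving it to the reader.
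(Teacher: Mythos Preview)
The paper does not actually prove this theorem: it is ``left to the reader'' with a pointer to \cite[Ch.~II]{GOL56NILP}. So there is no detailed argument to compare against, but the phrasing ``generalization of Theorem~\ref{esunica}'' signals that the intended proof is the \emph{direct} multi-factor version of that proof, not a reduction to the binary case via associativity.

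Your approach is correct in outline, but you have made the problem harder than it is. The direct route is this. For existence, lift $y$ to $g\in\mathcal F=\COMM_{i\in\mathcal I}G_i$ and note (exactly as in the $\ell(w)=1$ step of Lemma~\ref{ABC}) that repeated use of $ba=ab[b^{-1},a^{-1}]$ lets you sort the letters of $g$ into increasing index order at the cost of a tail in $[G_i]^{\mathcal F}$; then apply $q$. For uniqueness, observe that for each $j\in\mathcal I$ the retraction $\pi_j:\mathcal F\to G_j$ kills $[G_i]^{\mathcal F}$ (any cross-commutator has a letter from some $G_i$, $i\neq j$, so dies), hence kills $W(\mathcal F)\cap[G_i]^{\mathcal F}$ and descends to $\2W{i}G_i\to G_j$; applying it to $y=a_{i_1}\cdots a_{i_l}u$ recovers $a_j$, and then $u$ is determined. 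That is the whole proof, and it uses nothing beyond what is already in \S\ref{section 2}.

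By contrast, your route requires the associativity isomorphism $\2W{k=1}^{n}G_{i_k}\cong G_{i_1}\vW\bigl(\2W{k=2}^{n}G_{i_k}\bigr)$ \emph{together with} the compatibility of cartesian subgroups under it. In the paper's logical order associativity is only invoked \emph{after} this theorem, and the subsequent Lemmas~\ref{inclusion in verbal products} and~\ref{intersection of verbal products} actually \emph{use} Theorem~\ref{unique writing in products of several factors} in their proofs---so leaning on the associativity machinery here risks circularity unless you are prepared to reprove Moran's associativity theorem from scratch. The ``main obstacle'' you flag is real for your chosen path, but it is self-imposed: the direct argument sidesteps it entirely.
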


A key ingredient in the proofs of the next section is the notion of the support of an element in the verbal product of arbitrarily many groups. 
While  it is intuitively clear what the support should be, 
to prove that it is well defined requires the next two technical lemmas that can be omitted on a first reading.

\begin{lemma}\label{inclusion in verbal products}
Let $\{G_i\}_{i\in \mathcal{I}}$ be a family of groups indexed in a totally ordered set $\mathcal{I}$. Let $\mathcal{I}_0$ be a subset of  $\mathcal{I}$ and let $W\subseteq F_\infty$ be a set of words. Then the following diagram commutes
\[\begin{tikzcd}
\PRO{\mathcal{I}_0} G_i \arrow{r}{\tilde{i}_{\mathcal{I}_{0}}} \arrow[swap]{d}{q} & \PRO{\mathcal{I}}G_{i} \arrow{d}{\tilde{q}} \\
\2W{\mathcal{I}_{0}} G_i \arrow[dashed]{r}{i_{\mathcal{I}_0}} & \2W{\mathcal{I}}G_{i}
\end{tikzcd}
\]
Moreover, $i_{\mathcal{I}_{0}}$ is injective.
\end{lemma}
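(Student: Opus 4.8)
The diagram is built by a universal property argument, so the plan is first to produce the dashed map $i_{\mathcal{I}_0}$ making the square commute, and then to verify injectivity by exhibiting a one-sided inverse (or, more precisely, by showing the kernel is trivial using a retraction). Write $\mathcal{F}_0 := \PRO{\mathcal{I}_0} G_i$ and $\mathcal{F} := \PRO{\mathcal{I}} G_i$. The inclusion $\tilde{i}_{\mathcal{I}_0}\colon \mathcal{F}_0 \to \mathcal{F}$ is the canonical map of free products coming from the inclusion of index sets; it is injective, and in fact it is split by the retraction $r\colon \mathcal{F} \to \mathcal{F}_0$ that is the identity on each $G_i$ with $i \in \mathcal{I}_0$ and kills each $G_j$ with $j \notin \mathcal{I}_0$. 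To get the dashed arrow, I would compose $\tilde{i}_{\mathcal{I}_0}$ with $\tilde{q}\colon \mathcal{F} \to \2W{\mathcal{I}} G_i$ and check that this composite kills $\ker(q) = W(\mathcal{F}_0) \cap [G_i]^{\mathcal{F}_0}$. For this, note $\tilde{i}_{\mathcal{I}_0}\big(W(\mathcal{F}_0)\big) \subseteq W(\mathcal{F})$ by Lemma \ref{fullyinvarianceforwords} (functoriality of verbal subgroups under the homomorphism $\tilde{i}_{\mathcal{I}_0}$), and $\tilde{i}_{\mathcal{I}_0}\big([G_i]^{\mathcal{F}_0}\big) \subseteq [G_i]^{\mathcal{F}}$ since a generating commutator $[g_i, g_j]$ with $i,j \in \mathcal{I}_0$, $i \neq j$, maps to a generating commutator of $[G_i]^{\mathcal{F}}$. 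Hence $\tilde{i}_{\mathcal{I}_0}(\ker q) \subseteq W(\mathcal{F}) \cap [G_i]^{\mathcal{F}} = \ker \tilde{q}$, and the universal property of the quotient $q$ yields a unique homomorphism $i_{\mathcal{I}_0}$ with $i_{\mathcal{I}_0} \circ q = \tilde{q} \circ \tilde{i}_{\mathcal{I}_0}$, which is exactly commutativity of the square.

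For injectivity of $i_{\mathcal{I}_0}$, the natural approach is to use the retraction $r$ to build a retraction on the level of verbal products. The composite $q \circ r \colon \mathcal{F} \to \2W{\mathcal{I}_0} G_i$ should factor through $\tilde{q}$: one checks that $r(\ker \tilde{q}) \subseteq \ker q$, again using Lemma \ref{fullyinvarianceforwords} for $W(\mathcal{F}) \mapsto W(\mathcal{F}_0)$ under $r$, and the observation that $r$ sends a generating commutator $[g_i, g_j]$ of $[G_i]^{\mathcal{F}}$ either to a generating commutator of $[G_i]^{\mathcal{F}_0}$ (if both indices lie in $\mathcal{I}_0$) or to $1$ (if at least one does not, since $r$ kills that factor). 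This gives a homomorphism $\rho\colon \2W{\mathcal{I}} G_i \to \2W{\mathcal{I}_0} G_i$ with $\rho \circ \tilde{q} = q \circ r$. Then $\rho \circ i_{\mathcal{I}_0} \circ q = \rho \circ \tilde{q} \circ \tilde{i}_{\mathcal{I}_0} = q \circ r \circ \tilde{i}_{\mathcal{I}_0} = q \circ \mathrm{id}_{\mathcal{F}_0} = q$, and since $q$ is surjective this forces $\rho \circ i_{\mathcal{I}_0} = \mathrm{id}$, so $i_{\mathcal{I}_0}$ is injective (and even split).

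**Main obstacle.** The only genuinely delicate point is the claim $r\big([G_i]^{\mathcal{F}}\big) \subseteq [G_i]^{\mathcal{F}_0}$ — more precisely, that $r$ of the \emph{normal closure} in $\mathcal{F}$ of the mixed commutators lands in the normal closure in $\mathcal{F}_0$. This follows because $r$ is a group homomorphism, so it sends the normal closure of a set $S$ to the normal closure of $r(S)$ inside the image $r(\mathcal{F}) = \mathcal{F}_0$; and $r(S)$, where $S = \{[g_i, g_j] : g_i \in G_i, g_j \in G_j, i \neq j\}$, consists of mixed commutators among the $G_i$ with $i \in \mathcal{I}_0$ together with the identity, hence its normal closure in $\mathcal{F}_0$ is precisely $[G_i]^{\mathcal{F}_0}$. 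One should also double-check that $\tilde{q} \circ \tilde{i}_{\mathcal{I}_0}$ being injective on each $G_i$ is consistent with the identification convention (the images of the $G_i$ inside $\2W{\mathcal{I}_0} G_i$ and $\2W{\mathcal{I}} G_i$ are being silently identified via these maps), but this is a bookkeeping matter already implicit in Theorem \ref{unique writing in products of several factors}. Everything else is a routine diagram chase, so I would keep the write-up short and lean on Lemma \ref{fullyinvarianceforwords} and the splitting $r$ as the two load-bearing facts.
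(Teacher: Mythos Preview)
Your proof is correct and takes a genuinely different route from the paper's. The paper establishes injectivity by showing directly that $\ker(\tilde{q}\circ\tilde{i}_{\mathcal{I}_0})=\tilde{i}_{\mathcal{I}_0}(\ker q)$: given $g$ with $\tilde{i}_{\mathcal{I}_0}(g)\in W(\mathcal{F})\cap[G_i]^{\mathcal{F}}$, it applies Lemma~\ref{ABC} to the splitting $\mathcal{F}=\mathcal{F}_0\ast\mathcal{F}_{\mathcal{I}\setminus\mathcal{I}_0}$ to force $g\in W(\mathcal{F}_0)$, and then invokes the unique writing of Theorem~\ref{unique writing in products of several factors} to force $g\in[G_i]^{\mathcal{F}_0}$. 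Your argument instead builds a retraction $\rho$ at the level of verbal products from the free-product retraction $r\colon\mathcal{F}\to\mathcal{F}_0$, using only Lemma~\ref{fullyinvarianceforwords} and the elementary fact that a surjective homomorphism carries the normal closure of a set to the normal closure of its image. This is more economical: it avoids both Lemma~\ref{ABC} and Theorem~\ref{unique writing in products of several factors}, and as a bonus it shows $i_{\mathcal{I}_0}$ is \emph{split} injective, which the paper's computation does not make explicit. The paper's approach, in turn, yields a concrete description of how an element of $\ker\tilde{q}$ lying in the image of $\tilde{i}_{\mathcal{I}_0}$ decomposes, which is closer in spirit to the support analysis that follows; but for the lemma as stated your method is the cleaner one.
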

\begin{proof}
Note that $i_{\mathcal{I}_0}$ is well-defined since
 \begin{equation}\label{cuenta inclusion de kernels}
 \tilde{i}_{\mathcal{I}_0}(\ker{q} )= \tilde{i}_{\mathcal{I}_0}\Bigg(W\Big(\PRO{{\mathcal{I}_0}} G_i\Big) \cap \big{[}G_i\big{]}^{\COMM\limits_{\mathcal{I}_{0}} G_i}\Bigg)\subseteq W\Big(\PRO{\mathcal{I}} G_i\Big) \cap \big{[}G_i\big{]} ^{\COMM\limits_{\mathcal{I}} G_i}= \ker( \tilde{q}).
\end{equation}
This also shows that  $\tilde{i}_{\mathcal{I}_0}(\ker{q} )\subseteq \ker(\tilde{q}\circ\tilde{i}_{\mathcal{I}_0})$.
Assuming that these sets  are equal, the same argument given at the end of the proof of Proposition \ref{quotient},
 together with the fact that  $ \tilde{i}_{\mathcal{I}_0}$ is injective, shows that $i_{\mathcal{I}_0}$ is injective.
   Let us then prove the reverse inclusion. To that end, let $g\in \ker(\tilde{q}\circ\tilde{i}_{\mathcal{I}_0})$. 
Then,
 \begin{equation*}\tilde{i}_{\mathcal{I}_0}(g)\in W\Big(\PRO{\mathcal{I}} G_i\Big)\bigcap \big{[}G_i\big{]} ^{\COMM\limits_{\mathcal{I}} G_i}\bigcap \tilde{i}_{\mathcal{I}_0}\Big(\PRO{{\mathcal{I}_0}} G_i\Big).\end{equation*}
On the one hand,  by Lemma \ref{single w}, there exists $w \in \mathcal{C}(W)$ a word of length $n$ 
and ${\bf g}\in \Big(\COMM\limits_{\mathcal{I}}G_i\Big)^n$, such that $\tilde{i}_{\mathcal{I}_0}(g)=w(\bf{g})$.
Using the Lemma \ref{ABC} applied to $A:= \PRO{\mathcal{I}_0}G_i$ and $B:= \PRO{ \mathcal{I} \setminus \mathcal{I}_0}G_i$ it follows that 
$w({\bf g}) = \tilde{i}_{\mathcal{I}_0}(w(\pi_A({\bf g})))\tilde{i}_{\mathcal{I} \setminus \mathcal{I}_0}(w(\pi_B({\bf g})))u$, 
with $u\in W\Big(\COMM\limits_{\mathcal{I}}G_i\Big)\cap[A,B]$. 
Moreover, since $ w({\bf g}) \in \tilde{i}_{\mathcal{I}_0}\big(A\big)$, then,
by uniqueness of the writing in Lemma \ref{ABC}, it must be that $w({\bf g}) = \tilde{i}_{\mathcal{I}_0}(w(\pi_A({\bf g})))$ and hence $\tilde{i}_{\mathcal{I} \setminus \mathcal{I}_0}(w(\pi_B({\bf g}))) = 1$ and $u=1$. Then, since $ \tilde{i}_{\mathcal{I}_0}$ is injective, it follows that $g \in W(A )$.

On the other hand, by Theorem \ref{unique writing in products of several factors}, $ \tilde{i}_{\mathcal{I}_0}(g)$ has a writing with elements of
 $\tilde{i}_{\mathcal{I}_0}\Big(\COMM\limits_{\mathcal{I}_0}G_i\Big)$, namely 
 $ \tilde{i}_{\mathcal{I}_0}(g)= a_{i_1} \dots a_{i_l}u$ with $a_{i_k} \in G_{i_k}$, $i_1 < i_2 < \dots< i_l \in \mathcal{I}_0$ and $u \in  \tilde{i}_{\mathcal{I}_0}\Big(\big{[}G_i\big{]}^{\COMM\limits_{\mathcal{I}_0}G_i}\Big)$.
 Moreover, since $ \tilde{i}_{\mathcal{I}_0}(g) \in {\big{[}}G_i\big{]} ^{\COMM\limits_{\mathcal{I}} G_i}$, by uniqueness of the writing in  Theorem \ref{unique writing in products of several factors}, it follows that $a_{i_k} = 1$ for all $1\leq k\leq l$, 
 and  $ \tilde{i}_{\mathcal{I}_0} (g) \in \tilde{i}_{\mathcal{I}_0}\Big(\big{[}G_i\big{]}^{\COMM\limits_{\mathcal{I}_0}G_i}\Big)$.
  Then, since $ \tilde{i}_{\mathcal{I}_0}$ is injective, it follows that $g \in \big{[}G_i\big{]}^{\COMM\limits_{\mathcal{I}_0}G_i}$.
\end{proof}

\begin{remark}\label{write with finite support}Given $y\in \2W {i \in \mathcal I} G_i$, there exists  $g\in \PRO{i \in \mathcal I} G_i$ such that $\tilde{q}(g)=y$.
Then, by definition of the free product of groups, 
there exists a finite subset $\mathcal I_{0}$ of $\mathcal I$, such that 
$g\in \tilde{i}_{\mathcal{I}_0}\big(\COMM_{i \in \mathcal I_{0}} G_i\big)$
Hence, by the previous lemma, we can think of $y$ as being an element of $\2W {i \in \mathcal I_{0}} G_i$.
\end{remark}
\begin{lemma}\label{intersection of verbal products}
Let $\{G_i\}_{i \in \mathcal{I}}$ be a family of groups indexed in a totally ordered set $\mathcal{I}$. 
 Let $\mathcal{I}_1$ and $ \mathcal{I}_2$ be subsets of  $\mathcal{I}$ and
  let $W \subseteq F_\infty$ be a set of words.
   Consider $\2W{\mathcal{I}_1} G_i$ and $\2W{\mathcal{I}_2} G_i$ viewed as subgroups of $\2W{\mathcal{I}} G_i$, 
   according to Remark \ref{write with finite support}.
  Then
$\Big(\2W{\mathcal{I}_1} G_i \Big)\bigcap \Big( \2W{\mathcal{I}_2} G_i \Big)= \2W{\mathcal{I}_1\cap \mathcal{I}_2} G_i$,
 where we understand the verbal product over the empty set as  the element $1$.
\end{lemma}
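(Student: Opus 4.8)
The plan is to prove the two inclusions separately. The inclusion $\2W{\mathcal{I}_1\cap \mathcal{I}_2} G_i \subseteq \left(\2W{\mathcal{I}_1} G_i \right)\cap \left( \2W{\mathcal{I}_2} G_i \right)$ is immediate: since $\mathcal{I}_1\cap\mathcal{I}_2$ is contained in both $\mathcal{I}_1$ and $\mathcal{I}_2$, Lemma \ref{inclusion in verbal products} provides injections $i_{\mathcal{I}_1\cap\mathcal{I}_2}$ of $\2W{\mathcal{I}_1\cap\mathcal{I}_2}G_i$ into each of $\2W{\mathcal{I}_1}G_i$ and $\2W{\mathcal{I}_2}G_i$, and these are compatible with the inclusions into $\2W{\mathcal{I}}G_i$ because the whole diagram of inclusions commutes (all maps being restrictions of the identity-on-factors map at the level of free products). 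Hence an element coming from $\mathcal{I}_1\cap\mathcal{I}_2$ lies in both subgroups of $\2W{\mathcal{I}}G_i$.

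For the reverse inclusion, take $y \in \left(\2W{\mathcal{I}_1} G_i \right)\cap \left( \2W{\mathcal{I}_2} G_i \right)$. First I would reduce to a finite situation: by Remark \ref{write with finite support}, $y$ is supported on a finite subset, so we may assume $\mathcal{I}$, and hence $\mathcal{I}_1$ and $\mathcal{I}_2$, are finite. Now use the unique normal form of Theorem \ref{unique writing in products of several factors}: write $y = a_{i_1}\cdots a_{i_l}\,u$ with $i_1 < \dots < i_l$ in $\mathcal{I}$, each $a_{i_k}\in G_{i_k}\setminus\{1\}$, and $u \in [G_i]^w$. Since $y \in \2W{\mathcal{I}_1}G_i$, we may also write $y$ in its normal form relative to the product over $\mathcal{I}_1$; by the compatibility of the inclusion $i_{\mathcal{I}_1}$ with normal forms — which follows from Lemma \ref{inclusion in verbal products} together with the uniqueness in Theorem \ref{unique writing in products of several factors} applied inside $\2W{\mathcal{I}}G_i$ — the factors $a_{i_k}$ that actually occur must all have indices in $\mathcal{I}_1$, and the commutator part $u$ must lie in $[G_i]^w$ computed over $\mathcal{I}_1$. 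Applying the same argument with $\mathcal{I}_2$, we conclude that every index $i_k$ lies in $\mathcal{I}_1\cap\mathcal{I}_2$, and that $u$ lies in the image of $[G_i]^{w}$ over $\mathcal{I}_1$ and also over $\mathcal{I}_2$.

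It remains to handle the commutator part $u$: I must show that $u$, known to lie in both $\2W{\mathcal{I}_1}G_i$ and $\2W{\mathcal{I}_2}G_i$ and to be a commutator word, in fact lies in $\2W{\mathcal{I}_1\cap\mathcal{I}_2}G_i$. The cleanest route is to lift: choose $g$ in $\COMM_{\mathcal{I}_1}G_i$ with $\tilde q(\tilde i_{\mathcal{I}_1}(g))$ equal to the class of $u$, and separately a lift in $\COMM_{\mathcal{I}_2}G_i$; comparing the two lifts inside $\COMM_{\mathcal{I}}G_i$ modulo $\ker\tilde q = W(\COMM_{\mathcal I}G_i)\cap[G_i]^{\COMM_{\mathcal I}G_i}$ and using the structure of the free product — an element lying in both $\COMM_{\mathcal{I}_1}G_i$ and $\COMM_{\mathcal{I}_2}G_i$ up to such a kernel element must, after an application of Lemma \ref{ABC} with $A = \COMM_{\mathcal{I}_1\cap\mathcal{I}_2}G_i$ and $B$ the product over the complement, be pushed into $\COMM_{\mathcal{I}_1\cap\mathcal{I}_2}G_i$. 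This is essentially the second half of the argument in Lemma \ref{inclusion in verbal products}, run simultaneously for the two subsets. I expect this step — carefully matching the two normal forms / lifts and extracting that the support of $u$ is in the intersection — to be the main obstacle; everything else is bookkeeping with the uniqueness statements already established.
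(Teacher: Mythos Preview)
Your approach is essentially the paper's: both of you establish the easy inclusion immediately, and for the reverse inclusion both of you compare the two normal forms of $y$ (one coming from $\mathcal{I}_1$, one from $\mathcal{I}_2$) inside $\2W{\mathcal{I}_1\cup\mathcal{I}_2}G_i$ and invoke the uniqueness of Theorem \ref{unique writing in products of several factors}. The paper's proof stops there --- once the two normal forms agree it simply declares $y\in\2W{\mathcal{I}_1\cap\mathcal{I}_2}G_i$ --- without separately discussing the commutator part $u$.

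You are being more scrupulous than the paper in isolating the commutator part $u$ as a residual issue: from uniqueness you only get that $u$ lies in the images of both $[G_i]^w_{\mathcal{I}_1}$ and $[G_i]^w_{\mathcal{I}_2}$ inside $[G_i]^w_{\mathcal{I}_1\cup\mathcal{I}_2}$, and one still has to see this forces $u$ into the image of $[G_i]^w_{\mathcal{I}_1\cap\mathcal{I}_2}$. Your proposed fix --- lifting to the free product and running a Lemma~\ref{ABC} / Lemma~\ref{inclusion in verbal products}-style argument with $A=\COMM_{\mathcal{I}_1\cap\mathcal{I}_2}G_i$ --- is the right instinct and can be made to work, though as written it is a sketch rather than a proof: you will need to track where $\pi_B(g_1)$ and $\pi_B(g_2)$ live (they sit in the disjoint pieces $\COMM_{\mathcal{I}_1\setminus\mathcal{I}_2}G_i$ and $\COMM_{\mathcal{I}_2\setminus\mathcal{I}_1}G_i$) and use that, together with $\pi_A$ and $\pi_B$ applied to the kernel element, to force the $B$-parts to vanish modulo $\ker q$. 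So your outline matches the paper's argument and correctly flags the one step the paper leaves implicit; just be prepared to spell out that last free-product computation carefully.
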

\begin{proof}
It is clear that $\2W{\mathcal{I}_1\cap \mathcal{I}_2} G_i \subseteq \Big(\2W{\mathcal{I}_1} G_i \Big)\cap \Big( \2W{\mathcal{I}_2} G_i \Big)$. 
For the reverse inclusion, take $y \in \Big(\2W{\mathcal{I}_1} G_i \Big)\cap \Big( \2W{\mathcal{I}_2} G_i \Big)$.
Then, as $y \in \2W{\mathcal{I}_1} G_i$, by Theorem \ref{unique writing in products of several factors}, $y$ 
has a unique writing by elements of $\mathcal{I}_1$ and as
 $y \in \2W{\mathcal{I}_2} G_i$, it has a unique  writing by elements of $\mathcal{I}_2$. 
These two expressions must be the same since $y$ can be viewed as an element of $\2W{\mathcal{I}_{1}\cup \mathcal{I}_{2}} G_i$.
 Then $y \in \2W{\mathcal{I}_1\cap \mathcal{I}_2} G_i$.
\end{proof}

Remark \ref{write with finite support} says that for any $y\in \2W {i \in \mathcal I} G_i$,
 there exists a finite subset  $\mathcal I_{1}$ such that $y\in \2W {i \in \mathcal I_{1}} G_i$. 
 Consider the index set $\mathcal I_{0}:=\bigcap \big\{\tilde{\mathcal I}\subseteq \mathcal I_{1}: y\in \2W {i \in \tilde{\mathcal I}}G_i\big\}$.
 Since $\mathcal I_{1}$ is finite,   Lemma \ref{intersection of verbal products} makes clear that $y\in \2W {i \in \mathcal I_{0}} G_i$.
If there exists $ \mathcal I_{2}\subseteq  \mathcal I$ such that $y\in \2W {i \in \mathcal I_{2}}G_i$, then
by Lemma \ref{intersection of verbal products}, $y\in \2W {i \in \mathcal I_{0}\cap\mathcal I_{2}}G_i$. 
By the minimality of $\mathcal I_{0}$ as a subset of the finite set $\mathcal I_{1}$, it follows that 
$\mathcal I_{0}\cap\mathcal I_{2}=\mathcal I_{0}$.
This allows  to define the support of an element in the verbal product of groups.
\begin{definition}  \label{defsop}
Given $y \in \2W{i \in \mathcal I} G_i$, its support is the smallest subset $\mathcal I_{0}$ of $\mathcal I$ such that
$y\in\2W{i \in \mathcal I_{0}} G_i$.
\end{definition}
\begin{remark}\label{gauge} Some obvious properties of the support are:
\begin{enumerate}[label=(\arabic*)]
\item $supp(y)$ is a finite set;
\item $supp(y)=supp(y^{-1})$;
\item $supp (yy^\prime)\subseteq supp (y)\cup supp(y^\prime)$;
\item $supp(y)$ is empty if and only if $y=1$;
\end{enumerate}
\end{remark}

A fundamental property of verbal products is that it is an associative operation on groups.  For a proof of this fact we refer to \cite[Theorem 5.1]{GOL56NILP} for the case of nilpotent products and to \cite[Section $\S$ 5]{MOR56} for arbitrary verbal products. 
The associativity of the verbal product allows us to prove Corollary \ref{amenability of many products} from in the introduction. 

\begin{proof}[Proof of Corollary \ref{amenability of many products}] If $\mathcal I$ is finite, the result follows from associativity together with Proposition \ref{StabProp}. If $\mathcal I=\mathbb N$, then 
\begin{equation*}\2W {i\in\mathbb N } G_i=\bigcup_{n\in\mathbb N} \2W {i\in\{1,2,\ldots,n\} } G_i\end{equation*}
and amenability, soficity, hyperlinearity, weak soficity, linear soficity, the Haagerup property and exactness are preserved under countable increasing unions of discrete groups (see \cite[Proposition G.2.2]{BEK08}, \cite[Theorem 1]{ELE03},\cite[Proposition 6.1.1]{CHER01} and \cite[Exercise 5.1.1]{BRO08}). 
\end{proof}
\begin{remark} Property (T) is not stable under taking  verbal products of infinitely many discrete groups with more than one element.
 This is because such a group is not finitely generated.
\end{remark}

\section{Restricted verbal wreath products of groups} \label{section 5}

We start this section by recalling the definition of the restricted verbal wreath product between two groups. As it was already explained in the introduction, this notion was first introduced by  Shmelkin in \cite{MR0193131}. 

Let $G$ and $H$ be countable groups. Let $W\subseteq F_{\infty}$ be a set of words. Let $\mathcal F:=\COMM_{H} G$ be the free product of $|H|$-many copies of $G$. There is an action $H \stackrel{\alpha}{\curvearrowright} \mathcal F$, given by permuting the copies of $G$, that is, if $(g)_{h_1}$ denotes the element $g$ in the copy $h_1$ of $G$ in $\mathcal F$, then $\alpha_h((g)_{h_1})= (g)_{hh_1}$. 

Due to Lemma \ref{fullyinvarianceforwords} and to the fact that  for each $h\in H$, $\alpha_h$ is an automorphism, 
we have that the set $W(\mathcal F)\cap [G_i]^\mathcal F$ is invariant under the action of $H$.
Hence, there is a well-defined action $H \stackrel{\alpha}{\curvearrowright} \2W{H} G$.
\begin{definition} Let $G$ and $H$ be countable groups. Let $W\subseteq F_{\infty}$ be a set of words.
The semi-direct product  
$ G\wrw H:=\Big (\2W {H}G\Big )\rtimes_\alpha H$  will be called {\it the restricted verbal wreath product} of $G$ and $H$.
\end{definition}

\begin{remark} \label{cuatro} It follows from Definition \ref{defsop} that 
\begin{equation*}supp(\alpha_{h}(y))=h supp(y),\text{ for all }h\in H
\text { and for all }y\in \2W{H} G.\end{equation*}
\end{remark}

In \cite{SAS18}, it was shown that for $W=\{n_2\}$  as in example \ref{examples of products}\ref {item2examples product}, the restricted verbal wreath product of two groups with the Haagerup property has the Haagerup property. This was done following the general strategy developed by Cornulier, Stalder and Valette in \cite{CSV2}. The only properties of the second nilpotent product used in the proof were the associativity, the notion of support and the fact that the second nilpotent product between two groups with the Haagerup property has the Haagerup property. Exactly the same proof presented in \cite[Section $\S$5]{SAS18} allows us to prove Theorem \ref{theo2}.

\begin{proof}[Sketch of proof of Theorem \ref{theo2}] The five properties of Remarks \ref{gauge} and \ref{cuatro} are enough to prove adequate variants of \cite[Example 5.5 and Proposition 5.7]{SAS18}.
Then, combine them with \cite[Theorem 5.1]{CSV2}.
\end{proof}
\begin{remark} Since amenable-by-amenable extensions are amenable, it follows that in the case that $G$ and $H$ are both amenable, then  $G\wrw H$ is amenable for nilpotent, solvable, and $k$-Burnside wreath products, for $k=2,3,4,6$.
\end{remark}
\vskip .2in

\subsection{Sofic verbal wreath products}

As it was mentioned in the introduction, in  \cite{HAY18}, Hayes and Sale showed that the 
restricted wreath product of sofic groups is a sofic group.
In this subsection we will adapt the strategy developed in \cite{HAY18} 
to prove that certain restricted verbal wreath products of sofic groups are sofic. Before giving further explanations, 
let us recall the definition of sofic groups.

\begin{definition}
A countable discrete group $G$ is said to be sofic if for every $\varepsilon>0$ and every $F \subseteq G$ finite set, there exist a finite set $A$ and a function $\phi: G \to Sym(A)$ satisfying that $\phi(1)=1$ and 
\begin{itemize}
\item $(F,\varepsilon,d_{\rm{Hamm}})$-multiplicative: for all $g,g'\in F$  we have that $d_{\rm{Hamm}}(\phi(g)\phi(g'),\phi(gg')) < \varepsilon$;
 
 \item $(F,\varepsilon,d_{\rm{Hamm}})$-free: if $g\in F\setminus \{1\}$ we have that $d_{\rm{Hamm}}(\phi(g),1) \geq  1-\varepsilon$;
\end {itemize} 
 where the normalized Hamming distance in $Sym(A)$ is given by
\begin{equation*}d_{\rm{Hamm}}(\sigma,\tau):= \frac{1}{|A|}|\{a\in A: \sigma(a)\neq \tau(a)\}|\end{equation*}
\end{definition}

Starting with sofic approximations of $G$ and $H$ respectively,  Hayes and Sale provided an explicit sofic approximation of $G\wr H$.
To that end, built in their proof, there are explicit sofic approximations of finite direct sums of the form $\bigoplus_B G$ constructed from sofic approximations 
$\phi:G \to Sym(A)$. 
This is easily done by defining
\begin{align*}
\Theta: \bigoplus_B G &\to \bigoplus_B Sym(A)\stackrel{diag}\hookrightarrow Sym(A^{|B|})\\
(g_b)_{b\in B} &\mapsto (\phi(g_b))_{b\in B}
\end{align*}
However, if we try to replicate this in the case of verbal products,  additional technical difficulties arise.
The problem being that there is no obvious way on how to define sofic approximations on elements of $\big[G\big]^{\COMM\limits^{_w}_{B}G}$.
Indeed, a first attempt would be to start with a sofic approximation of $G$ on $Sym(A)$ 
and construct a {\it coordinate-wise} sofic approximation
$\Theta$, of $\COMM\limits_{B}^{_w}G $ on $\bigoplus_B Sym(A)$.
The obstruction is that $\Theta([g_i,g_j])=1$, for every $g_i\in G_i$, $g_j\in G_j$ with $i\neq j$, while $[g_i,g_j]$ 
is in general nonzero for $k$-nilpotent products when $k\geq 2$.

For a second attempt, we could consider approximating by the group $Sym(A) \vW Sym(A)$ instead of by the group $Sym(A) \times Sym (A)$. The following example shows this does not work either.
Let $p$ be an odd prime, consider the homomorphism
 \begin{align*}
\theta: \nicefrac{\mathbb{Z}}{p\mathbb{Z}} &\to Sym(A) \text{, where }A=\{1,\dots,p\}\\
1&\mapsto (1\dots p)\text{, the cycle of length } p
\end{align*} 
let  $\Theta_0: \nicefrac{\mathbb{Z}}{p\mathbb{Z}} \ast\nicefrac{\mathbb{Z}}{p\mathbb{Z}}
\to 
Sym(A) \ast Sym(A)$ be the free product homomorphism and let 
$\Theta: \nicefrac{\mathbb{Z}}{p\mathbb{Z}} \stackrel{_2}{\ast}\nicefrac{\mathbb{Z}}{p\mathbb{Z}}
\to 
Sym(A) \stackrel{_2}{\ast} Sym(A)$ be the quotient homomorphism. 
From \cite[Proposition 2.13]{SAS18}, we have 
\begin{equation*}[\nicefrac{\mathbb{Z}}{p\mathbb{Z}} ,\nicefrac{\mathbb{Z}}{p\mathbb{Z}}]^
{\nicefrac{\mathbb{Z}}{p\mathbb{Z}} \,\stackrel{_2}{\ast}\,\nicefrac{\mathbb{Z}}{p\mathbb{Z}}}\cong \nicefrac{\mathbb{Z}}{p\mathbb{Z}}
\,\,\,\,\,\,\,\,\,\,\text{ and }\,\,\,\,\,\,\,\,\,\,
[Sym(A) , Sym(A)]^{Sym(A)\stackrel{_2}{\ast} Sym(A)}\cong \nicefrac{\mathbb{Z}}{2\mathbb{Z}}\end{equation*}
It follows that if we take $F:=[\nicefrac{\mathbb{Z}}{p\mathbb{Z}} ,\nicefrac{\mathbb{Z}}{p\mathbb{Z}}]^
{\nicefrac{\mathbb{Z}}{p\mathbb{Z}} \,\stackrel{_2}{\ast}\,\nicefrac{\mathbb{Z}}{p\mathbb{Z}}}$ then $F\subseteq \ker(\Theta)$
 and hence, $\Theta$ cannot be $(F,\varepsilon)$-free for any $\varepsilon>0$. 

The examples above hint that it is difficult to spell an explicit sofic approximation of the verbal product $\2W{B} G$ starting from a sofic approximation of $G$.
However, by means of Corollary \ref{amenability of many products}, we know that for every $F\subseteq \2W {B}G$ finite set and $\varepsilon>0$, there exists a $(F,\varepsilon)$-sofic approximation of $\2W{B} G$. 
Knowing the existence of a sofic approximation of the verbal product without passing to its construction, simplifies some technical steps of the proof of Theorem  \ref{theo sofic intro}. In the case of the verbal product for the word $\{n_1\}$ of example  \ref{examples of products}\ref{item1examples product}, this simplifies a bit the proof in \cite{HAY18}. Of course, on the negative side, our proof gives, in principle, less information than \cite{HAY18}. Another difference with respect to \cite{HAY18} is that in our proof we take the point of view discussed in \cite[Remark 3.5]{HAY18} and thus we deal with what Holt and Rees called {\it strong discrete $\mathcal {C}$-approximations} \cite[$\S$ 1]{MR3632893}. We believe this approach further simplifies the proof.
 Before starting with the proof of Theorem  \ref{theo sofic intro}, 
we recall the following Lemma from \cite{HAY18},  adapted to the situation at hand. 

\begin{lemma}\cite[Lemma 2.8]{HAY18} \label{cuasimultiplicativa}
Let $G$ and $H$ be countable, discrete groups, and let 
 \begin{align*}
  {\rm proj}_H: G\wrw H &\to H,\\
{\rm proj}_G: G\wrw H &\to \2W{H} G ,   
 \end{align*}
be the canonical projection maps. For a finite subset $F_0 \subseteq G\wrw H$  with $1\in F_0$ define the subsets
\begin{align}
E_1& := \{\alpha_h(x): h \in {\rm proj}_H(F_0),x \in {\rm proj}_G(F_0) \} \label{def of E1};\\
\tilde{E}_1& := \{y\alpha_h(x): h \in {\rm proj}_H(F_0), x,y \in {\rm proj}_G(F_0) \}\label{def of E1 tilde};\\
E_2 & :=   {\rm proj}_H(F_0).\label{def of E2}
\end{align}
Let $\varepsilon >0$ and let $(K,d)$ be a group with bi-invariant metric $d$. 
Suppose $\Gamma: G \wrw H \to K$ is a function with $\Gamma(1)=1$  that verifies the following properties:
\begin{enumerate}[label=(\roman*)]
    \item \label{item1} the restriction of $\Gamma$ to $\2W{H}G$ is $(E_1, \nicefrac{\varepsilon}{6},d)$-multiplicative;
    \item \label{item2} the restriction of $\Gamma$ to $H$ is $(E_2, \nicefrac{\varepsilon}{6},d)$-multiplicative;
      \item \label{item3} $\max_{x\in \tilde{E}_{1}, h \in E_{2}.E_2}  d(\Gamma(x,1)\Gamma(1,h),\Gamma(x,h))< \varepsilon/6;$
       \item \label{item4}$\max_{x\in {\rm proj}_G(F_0), h \in E_2}   d(\Gamma(1,h)\Gamma(x,1),\Gamma(\alpha_h(x),1)\Gamma(1,h)) < \varepsilon/6$.
\end{enumerate}
\noindent
Then $\Gamma$ is $(F_0,\varepsilon,d)$-multiplicative, namely for all 
$z,z^\prime\in F_0,   d(\Gamma(z)\Gamma(z^\prime);\Gamma(zz^\prime))<\varepsilon$.
\end{lemma}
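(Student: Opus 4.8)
The plan is to reduce an arbitrary product $\Gamma(z)\Gamma(z')$ with $z=(x,h)$, $z'=(x',h')$ in $F_0$ to the four hypotheses by writing every element of the restricted verbal wreath product in the normal form $z=(x,h)$ with $x\in\PRO{H}^{w}G$ and $h\in H$, and recalling that the multiplication in the semidirect product is $(x,h)(x',h')=(x\,\alpha_h(x'),hh')$. So the target quantity is $d\bigl(\Gamma(x,h)\Gamma(x',h'),\Gamma(x\,\alpha_h(x'),hh')\bigr)$, and the strategy is to insert the "reference" values $\Gamma(x,1)$, $\Gamma(1,h)$, $\Gamma(\alpha_h(x'),1)$, $\Gamma(1,h')$ and telescope, using the bi-invariance of $d$ at every step so that left/right translation by a fixed element of $K$ does not change distances.

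First I would use hypothesis \ref{item3} twice: once to replace $\Gamma(x,h)$ by $\Gamma(x,1)\Gamma(1,h)$ (legitimate since $x\in{\rm proj}_G(F_0)\subseteq\tilde E_1$ and $h\in E_2\subseteq E_2.E_2$), and once, after the rearrangements below, to recombine a product of the form $\Gamma(x\alpha_h(x'),1)\Gamma(1,hh')$ back into $\Gamma(x\alpha_h(x'),hh')$; here one must check $x\alpha_h(x')\in\tilde E_1$ and $hh'\in E_2.E_2$, which is exactly why $\tilde E_1$ and the product set $E_2.E_2$ were built into the statement. Next, inside the string $\Gamma(x,1)\Gamma(1,h)\Gamma(x',1)\Gamma(1,h')$ I would apply hypothesis \ref{item4} to the middle pair $\Gamma(1,h)\Gamma(x',1)$, turning it into $\Gamma(\alpha_h(x'),1)\Gamma(1,h)$ — valid because $x'\in{\rm proj}_G(F_0)$ and $h\in E_2$. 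After this commutation the string reads $\Gamma(x,1)\Gamma(\alpha_h(x'),1)\Gamma(1,h)\Gamma(1,h')$, and now the two adjacent $\Gamma$-on-$\PRO{}^{w}G$ factors are collapsed by \ref{item1} (using $x,\alpha_h(x')\in E_1$, the latter by definition of $E_1$) while the two adjacent $\Gamma$-on-$H$ factors are collapsed by \ref{item2} (using $h,h'\in E_2$), yielding $\Gamma(x\alpha_h(x'),1)\Gamma(1,hh')$, which is recombined as above.

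Assembling the telescope, each of the (at most six) substitutions contributes at most $\varepsilon/6$ to the total by the triangle inequality and bi-invariance of $d$, so $d\bigl(\Gamma(z)\Gamma(z'),\Gamma(zz')\bigr)<\varepsilon$, as required. The routine-but-essential bookkeeping is verifying the set memberships at each step: that the elements fed into \ref{item1}–\ref{item4} genuinely lie in $E_1,\tilde E_1,E_2,E_2.E_2$ as appropriate; this is immediate from the definitions \eqref{def of E1}, \eqref{def of E1 tilde}, \eqref{def of E2}, once one notes $1\in F_0$ forces ${\rm proj}_G(F_0)$ and ${\rm proj}_H(F_0)$ to contain the identity, so e.g. $E_1\supseteq{\rm proj}_G(F_0)$ (take $h=1$) and $\tilde E_1\supseteq E_1$ (take $y=1$).

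The only real subtlety — the main obstacle — is getting the order of the substitutions right so that at the moment one invokes \ref{item3} for recombination, the argument $x\alpha_h(x')$ is already in the required set; this is why one must do the \ref{item4}-commutation and the \ref{item1}/\ref{item2} collapses \emph{before} the final \ref{item3} step rather than after, and why the hypotheses are stated with $E_1$ (containing $\alpha_h(x)$) rather than merely ${\rm proj}_G(F_0)$. Everything else is a mechanical triangle-inequality estimate, and I would present it as a short chain of displayed inequalities with a one-line justification under each.
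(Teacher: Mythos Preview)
Your proposal is correct and follows essentially the same route as the paper's proof: both split $\Gamma(x,h)\Gamma(x',h')$ and $\Gamma(x\alpha_h(x'),hh')$ into strings of four $\Gamma$-values using \ref{item3}, then compare those strings via \ref{item1}, \ref{item2}, \ref{item4} and bi-invariance, for a total of six $\varepsilon/6$ contributions. One small slip in your narrative: you say you use \ref{item3} ``twice'', but to reach the string $\Gamma(x,1)\Gamma(1,h)\Gamma(x',1)\Gamma(1,h')$ you must also apply \ref{item3} to the factor $\Gamma(x',h')$, so \ref{item3} is invoked three times (split $(x,h)$, split $(x',h')$, recombine $(x\alpha_h(x'),hh')$); your ``at most six'' count and the set-membership checks are nonetheless correct.
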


\begin{proof}
For $(x,h),(x',h')\in F_0$,   the triangular inequality, the invariance of $d$ and properties \ref{item1},\ref{item2},\ref{item3} yield the following estimates
\begin{align*}
 &d(\Gamma(x,h)\Gamma(x',h'); \Gamma(x,1)\Gamma(1,h)\Gamma(x',1)\Gamma(1,h'))<\varepsilon/3;\\
&d(\Gamma(x\alpha_h(x'),hh');\Gamma(x,1)\Gamma(\alpha_h(x'),1)\Gamma(1,h)\Gamma(1,h') )<\varepsilon/2.
\end{align*}
Then,
\begin{align*}
    d(\Gamma(x,h)& \Gamma(x',h');\Gamma(x\alpha_h(x'),hh')) \\ 
&\leq d(\Gamma(x,1)\Gamma(1,h)\Gamma(x',1)\Gamma(1,h');\Gamma(x\alpha_h(x'),hh')) + \varepsilon/3 \\
&\leq d(\Gamma(x,1)\Gamma(1,h)\Gamma(x',1)\Gamma(1,h');\Gamma(x,1)\Gamma(\alpha_h(x'),1)\Gamma(1,h)\Gamma(1,h')) + \frac{5}{6} \varepsilon \\
 &= d(\Gamma(1,h)\Gamma(x',1); \Gamma(\alpha_h(x'),1)\Gamma(1,h)) + \frac{5}{6}\varepsilon< \varepsilon\,\,\,\,\,\,\text{(here we use \ref{item4})}.\qedhere
\end{align*}
 \end{proof} 
 
 \begin{remark}
  Item \ref{item3} here is slightly different from 
\cite[Lemma 2.8, third bullet]{HAY18}. We do not know how to prove the lemma only using the hypothesis of \cite{HAY18} on item \ref{item3}.
This does not affect the proof of  Theorem \ref{theo sofic intro}, since, like in \cite{HAY18}, when we apply Lemma \ref{cuasimultiplicativa}, we will prove that in the cases at hand it holds that
$\Gamma(x,1)\Gamma(1,h)=\Gamma(x,h)$ for all $x\in \2W{H} G $ and all $h\in H$.
\end{remark}

\begin{proof}[Proof of Theorem \ref{theo sofic intro}]
Let $F \subseteq G\wrw H$ be a finite subset and $\varepsilon >0$. Define $F_0:= F \cup \{1\}\cup F^{-1}$ and consider the sets $E_1$ and $E_2$ as in Lemma \ref{cuasimultiplicativa}. Also, consider the sets
\begin{align}
E&:= E_2\cup E_2\cdot supp(E_1)  \label{def of E}; \\
E_H&:= E \cdot E^{-1} \label{def of E sub H};
\end{align} 
where, as usual, $ supp(E_1)=\bigcup_{x\in E_{1}} supp(x)$. 

Since $H$ is sofic,  for any $\varepsilon'>0$, 
there exist  a finite set $B$ and a 
$(E_H,\varepsilon')\text{-sofic approximation }$
\begin{equation*}\sigma: H \to Sym(B)\text{  with }\sigma(1)=1.\end{equation*} 
Define the sets 
\begin{align}
    B_1&:=\{b\in B: \sigma(h_1)^{-1}b \neq \sigma(h_2)^{-1}b \;\text{for all } h_1 \neq h_2\in E\};   \label{def of  B 1}\\
B_2 &:= \{ b\in B : \sigma(h_2 h_1)^{-1}b = \sigma(h_1)^{-1}\sigma(h_2)^{-1}b,\; \text{for all } h_1,h_2 \in E\};  \label{def of  B 2}\\
 B_E&:= B_1 \cap B_2; \label{def of B sub E}
\end{align}
and  recall the following lemma from \cite{HAY18}.
\begin{lemma}\cite[Lemma 3.4]{HAY18}
\label{kappa}
Let $\kappa>0$. If $\varepsilon'<\frac{\kappa}{4|E|^2}$, then
\begin{equation*}\frac{|B\setminus B_E|}{|B|}\leq \kappa.\end{equation*}
\end{lemma}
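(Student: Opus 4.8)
The plan is to estimate the size of the complement of $B_E$ inside $B$ by bounding $|B\setminus B_1|$ and $|B\setminus B_2|$ separately and then using $B\setminus B_E = (B\setminus B_1)\cup(B\setminus B_2)$. Both bounds follow directly from the fact that $\sigma$ is an $(E_H,\varepsilon')$-sofic approximation, once one unwinds the definitions \eqref{def of  B 1} and \eqref{def of  B 2} of $B_1$ and $B_2$ and recalls that $E_H = E\cdot E^{-1}$, so that all the relevant products of two elements of $E$ or of $E$ with an inverse lie in the set $E_H$ on which $\sigma$ is multiplicative and free.

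First I would bound $|B\setminus B_2|$. By definition, $b\notin B_2$ means there exist $h_1,h_2\in E$ with $\sigma(h_2h_1)^{-1}b \neq \sigma(h_1)^{-1}\sigma(h_2)^{-1}b$, equivalently $\sigma(h_2h_1)^{-1}$ and $\sigma(h_1)^{-1}\sigma(h_2)^{-1}$ disagree at $b$. Since $\sigma(h_2h_1)^{-1}=\sigma(h_2h_1)^{-1}$ and the multiplicativity hypothesis gives $d_{\rm Hamm}(\sigma(h_2)\sigma(h_1),\sigma(h_2h_1))<\varepsilon'$ for $h_1,h_2\in E$ (note $h_2h_1\in E_H$), inverting and using bi-invariance of the Hamming metric yields $d_{\rm Hamm}(\sigma(h_1)^{-1}\sigma(h_2)^{-1},\sigma(h_2h_1)^{-1})<\varepsilon'$ for each such pair. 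Hence the set of $b$ witnessing failure for a fixed pair $(h_1,h_2)$ has size at most $\varepsilon'|B|$, and taking the union over the at most $|E|^2$ pairs gives $|B\setminus B_2|\le \varepsilon'|E|^2|B|$.

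Next I would bound $|B\setminus B_1|$. Here $b\notin B_1$ means there are distinct $h_1,h_2\in E$ with $\sigma(h_1)^{-1}b=\sigma(h_2)^{-1}b$, i.e. $\sigma(h_2)\sigma(h_1)^{-1}$ fixes $b$. Since $h_2 h_1^{-1}\in E\cdot E^{-1}=E_H$ and $h_2h_1^{-1}\neq 1$, the freeness of the approximation gives $d_{\rm Hamm}(\sigma(h_2h_1^{-1}),1)\ge 1-\varepsilon'$, so $\sigma(h_2h_1^{-1})$ fixes at most $\varepsilon'|B|$ points; combining this with the multiplicativity estimate $d_{\rm Hamm}(\sigma(h_2)\sigma(h_1)^{-1},\sigma(h_2h_1^{-1}))<\varepsilon'$ (again $h_2h_1^{-1}\in E_H$), we get that $\sigma(h_2)\sigma(h_1)^{-1}$ fixes at most $2\varepsilon'|B|$ points. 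Summing over the at most $|E|^2$ ordered pairs, $|B\setminus B_1|\le 2\varepsilon'|E|^2|B|$. Therefore $|B\setminus B_E|\le |B\setminus B_1|+|B\setminus B_2|\le 3\varepsilon'|E|^2|B|$, and if $\varepsilon' < \kappa/(4|E|^2)$ this is at most $\tfrac{3}{4}\kappa|B|\le\kappa|B|$, which is the claim. (One has room to spare in the constant, which is presumably why the hypothesis is stated with $4$ rather than $3$.)

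I do not expect a serious obstacle here: the lemma is a bookkeeping exercise in the definitions, and the only mildly delicate point is making sure that every product that appears — $h_2h_1$ in the analysis of $B_2$ and $h_2h_1^{-1}$ in the analysis of $B_1$ — actually lies in $E_H=E\cdot E^{-1}$, which holds because $E\subseteq E_H$ (as $1\in E$, so $h = h\cdot 1^{-1}\in E\cdot E^{-1}$, assuming $1\in E$; if not, one notes $h_2h_1 = h_2 (h_1^{-1})^{-1}$ need not lie in $E\cdot E^{-1}$, so the reader should check that $\sigma$ is taken multiplicative and free on a set large enough to contain all these products — in \cite{HAY18} this is arranged, and the same arrangement works here). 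The rest is the triangle inequality for $d_{\rm Hamm}$ together with its bi-invariance under left and right translation by permutations.
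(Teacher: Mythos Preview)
The paper does not give its own proof of this lemma; it is quoted directly from \cite{HAY18}. Your argument is the standard one and is essentially correct, with two small points worth tightening.

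First, the estimate $d_{\rm Hamm}(\sigma(h_2)\sigma(h_1)^{-1},\sigma(h_2h_1^{-1}))<\varepsilon'$ is not a direct instance of $(E_H,\varepsilon')$-multiplicativity: multiplicativity gives $d_{\rm Hamm}(\sigma(h_2)\sigma(h_1^{-1}),\sigma(h_2h_1^{-1}))<\varepsilon'$, with $\sigma(h_1^{-1})$ rather than $\sigma(h_1)^{-1}$. You need one more step, namely $d_{\rm Hamm}(\sigma(h_1)\sigma(h_1^{-1}),\sigma(1))<\varepsilon'$ (using $h_1,h_1^{-1}\in E_H$ and $\sigma(1)=1$), which by bi-invariance yields $d_{\rm Hamm}(\sigma(h_1)^{-1},\sigma(h_1^{-1}))<\varepsilon'$. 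The triangle inequality then gives the bound you want with $2\varepsilon'$ in place of $\varepsilon'$, so $|B\setminus B_1|\le 3\varepsilon'|E|^2|B|$ and the total becomes $|B\setminus B_E|\le 4\varepsilon'|E|^2|B|<\kappa|B|$. This is presumably the reason for the constant $4$ in the hypothesis.

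Second, your closing parenthetical slightly misidentifies the issue: $(E_H,\varepsilon')$-multiplicativity requires the two \emph{factors} to lie in $E_H$, not their product. Since $1\in F_0$ forces $1\in E_2\subseteq E$, one has $E\subseteq E\cdot E^{-1}=E_H$ and $E^{-1}=1\cdot E^{-1}\subseteq E_H$, which covers every factor you use ($h_1,h_2,h_1^{-1}$). For the freeness step you do need the product $h_2h_1^{-1}$ to lie in $E_H$, and that is immediate from the definition $E_H=E\cdot E^{-1}$.
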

 \begin{remark}\label {choice of epsilon prime}
 In what follows, $\varepsilon'$ will be chosen to be dependent on $\kappa$ according to Lemma \ref{kappa}, 
 namely we will choose $\varepsilon'<\frac{\kappa}{4|E|^2}$. 
\end{remark}

For each $h\in H$ and each $b\in B$, call $\theta_b^{(h)}: G \to \COMM\limits_{B}G$,  the embedding of $G$ into the $\sigma(h)^{-1}b$ copy of $G$ inside the free product
  $\COMM\limits_{B}G$,  and define
   \begin{align}
        \tilde{\theta}_b: \COMM\limits_{E} G & \to \COMM\limits_{B}G \nonumber\\
        (g_{h_1})_{h_1}(g_{h_2})_{h_2}\ldots (g_{h_n})_{h_n}
 & \mapsto \theta_b^{(h_1)}(g_{h_1})\dots \theta_b^{(h_n)}(g_{h_n})\label{theta b in free product},
   \end{align}
where $(g_{h_i})_{h_i}$ denotes the element  $g_{h_i}$ in the   $h_i$  copy of $G$ inside  $\COMM\limits_{E} G$, and $h_i\neq h_{i+1}$.\\
 By the universal property of the free product, $ \tilde{\theta}_b$ is a group homomorphism.
  From it, we will construct a homomorphism between the groups $\COMM\limits_{E}^{_w} G$ 
  and $\COMM\limits^{_w}_{B}G$. To that end, let   $w\in W\subseteq F_\infty$ be a word in $n$ 
letters and let $(y_1,\dots,y_n)\in \Big( \COMM\limits_{E} G \Big)^n$.
 By Lemma \ref{fullyinvarianceforwords}, we have that 
 $\tilde{\theta}_b(w(y_1,\dots,y_n)) = w(\tilde{\theta}_b(y_1),\dots, \tilde{\theta}_b(y_n))\in W\Big( \COMM\limits_{B} G \Big).$
 Also, if $g_{h_{i}}\in G_{h_i}$ and $g_{h_{j}}\in G_{h_j}$, then 
 $\tilde{\theta}_b([g_{h_{i}},g_{h_{j}}])=[\tilde{\theta}_b(g_{h_{i}}),\tilde{\theta}_b(g_{h_{j}})]\in [G_{\sigma^{-1}(h_i)b},G_{\sigma^{-1}(h_j)b}].$
 Finally, in the case when $b\in B_1$ and  $h_i \neq h_j$,
we have that $[G_{\sigma^{-1}(h_i)b},G_{\sigma^{-1}(h_j)b}]$ is inside  of  $\big[G\big]^{\COMM\limits_{B} G}$ 
(here we used the notation in Definition \ref{mixed commutator}).
Hence, all this combined tells that if $b\in B_1$ and
$ u \in W\Big(\COMM\limits_{E }G\Big) \cap 
\big[G\big]^{\COMM\limits_{E} G}$,
then 
$\tilde{\theta}_b (u)  \in W\Big(\COMM\limits_{B}G\Big) \cap \big[G\big]^{\COMM\limits_{B} G}.$ 
This shows that for each $b \in B_1$, we have a well-defined quotient homomorphism
  \begin{equation}\label{theta b is a homomorphism in E}
  \theta_b: \2W{E} G \to \2W{B}G.
  \end{equation}
 
  We claim that for $b\in B_1$, $\theta_b$ is injective. Indeed,  
  define  $T_b:= \{\sigma(h)^{-1}b : h \in E\}\subseteq B$. Since $b\in B_1$, the sets $E$ and $T_b$ have the same (finite) cardinal. 
 Then $\theta_b$ can be regarded as the composition of the following injective homomorphisms
  \begin{equation*} \2W{E} G \,\,\cong \,\, \2W{T_{b}}G\,\,\hookrightarrow \,\,\2W{ B}G.\end{equation*}
  
By Remark \ref{write with finite support}, we can regard $\2W{E} G$ as a subgroup of $\2W{H} G$.
  We now extend  $\theta_b$ to a function  (not a homomorphism) 
  \begin{equation*}\theta_b: \2W{H} G \to \2W{B}G\end{equation*}
  by declaring 
  \begin{align}\label{def-of-theta-b}
        \theta_b(x):=
  \begin{cases}
\theta_b(x)  &\text{ if  } x\in  \2W{E} G  \\
1 \hspace{0.65cm}  &\text{ if not }
\end{cases}
  \end{align}
and define
\begin{align*}
    \theta_B : \2W{H} G &\to \bigoplus\limits_{B} \2W{B}G\\
 x &\mapsto \Bigg( b\mapsto 
\begin{cases}
\theta_b(x)  &\text{ if  } b\in B_E=B_1\cap B_2  \\
1 &\text{ if not }
\end{cases}\Bigg)
\end{align*}
\begin{remark} The condition $b\in B_E$ rather than $b\in B_1$ will be necessary only in  \eqref{why we need b2}.
\end{remark}
 $Sym(B)$ acts by permutations on the summands of $\bigoplus\limits_{B} \Big(\2W{B}G\Big)$. 
The permutational wreath product $\Big(\bigoplus\limits_{B} \2W{B}G\Big)\rtimes Sym(B)$ is denoted by $\Big(\2W{B}G\Big) \wr_B Sym(B)$.
Finally, define
\begin{align*}
        \Theta: G\wrw H  & \to \Big(\2W{B}G\Big) \wr_B Sym(B) \\
    (x,h) &\mapsto (\theta_B(x), \sigma(h))
\end{align*}

Since $G$ is sofic, the hypothesis in the statement of  Theorem \ref{theo sofic intro}, together with Corollary \ref{amenability of many products}, implies that the verbal product $\2W{B }G$ is a sofic group. 
Hence, given the finite set
\begin{equation}\label{E sub g}E_G:= \bigcup_{b\in B_{E}} \theta_b(E_1)
\end{equation}
and $\varepsilon'>0$,
  there exist a finite set $A$ and $\varphi: \2W{B}G \to Sym(A)$ a $(E_G,\varepsilon')$-sofic approximation of $\2W{B}G$ with $\varphi(1)=1$. Define 
\begin{align*}
    \varphi_\star:  \2W{B} G \wr_B Sym(B) &\to Sym(A)\wr_B Sym(B) \\
    ((x_b)_{b\in B},\tau) &\mapsto((\varphi(x_b))_{b\in B},\tau)
\end{align*}
and consider the embedding
\begin{align*}
    \psi:Sym(A)\wr_B Sym(B) &\to Sym(A \times B) \\
\psi(\alpha,\beta)(a,b) &=((\alpha)_{\beta(b)}(a),\beta(b)).
\end{align*}
Finally, define the function
   $ \Gamma : G \wrw H \to Sym(A \times B)
\text{ given by } \Gamma(x,h):=\psi(\varphi_\star(\Theta(x,h))$.
To be more explicit, 
\begin{align}\label{gamma explicit}
    \Gamma(x,h)(a,b)=
  \begin{cases}
((\varphi \theta_{\sigma(h)b}(x))(a), \sigma(h)b)  &\text{ if  } \sigma(h)b\in B_E \\
(a,\sigma(h)b)  &\text{ if }\sigma(h)b\notin B_E
\end{cases}
\end{align}

{\bf Claim:} $\Gamma$ is a $(F_0,\varepsilon)$-sofic approximation of $G \wrw H$.\\
 In order to prove that $\Gamma$ is $(F_0,\varepsilon,d_{\rm{Hamm}})$-multiplicative, it is enough to show that the four premises of Lemma \ref{cuasimultiplicativa} hold true. 
To that end, consider $K=Sym(A \times B)$ with its Hamming distance and observe that $\Gamma(1,1)=1$.

In order to check  \ref{cuasimultiplicativa}\ref{item1}, take $(x,1), (x',1)$ with $x,x' \in E_1$. Note that 
\begin{equation*}
supp(x), supp(x'), supp(xx')\subseteq supp(E_1)\subseteq E. \text{ In particular } x,x',xx'\in \2W{E}G.
\end{equation*} 
On the one hand, if $b\notin B_E$,  by \eqref{gamma explicit}, we have that $\Gamma(x,1)(a,b)=(a,b)$, for every $x\in\2W{H}G$.
On the other hand, if $(a,b) \in A\times B_E$, by \eqref{gamma explicit}, we have the identities
   \begin{equation*} (\Gamma(x,1)\Gamma(x',1))(a,b)=\Gamma(x,1)\Big(\varphi(\theta_b(x'))(a), b\Big)= 
\Big((\varphi(\theta_b(x))\circ \varphi(\theta_b(x')))(a), b\Big)\end{equation*}
and
\begin{equation*}
\Gamma(xx',1)(a,b)=\big( \varphi(\theta_b(xx'))(a),b \big).\end{equation*}
Then 
\begin{align*}
    d_{\text{Hamm}}&(\Gamma(x,1)\Gamma(x',1),\Gamma(xx',1))\\
 &=\frac{1}{|A||B|} |\{(a,b) \in A\times B: \Gamma(x,1)(\Gamma(x',1)(a,b)) \neq \Gamma(xx',1)(a,b)\}|\\
 &=\frac{1}{|A||B|} |\{(a,b) \in A\times B_E: \Gamma(x,1)(\Gamma(x',1)(a,b)) \neq \Gamma(xx',1)(a,b)\}|\\
 &=\frac{1}{|A||B|} |\{(a,b) \in A\times B_E: \Big(\varphi(\theta_b(x))\big(\varphi(\theta_b(x'))a\big), b\Big) \neq (\varphi(\theta_b(xx'))a,b)\}|\\
&=\frac{1}{|B|}\sum\limits_{b\in B_E} \frac{1}{|A|}|\{a\in A: \varphi(\theta_b(x))\varphi(\theta_b(x'))a\neq \varphi(\theta_b(xx'))a\}|\\
&=\frac{1}{|B|}\sum_{b\in B_E}d_{\text{Hamm}}(\varphi(\theta_b(x))\varphi(\theta_b(x')),\varphi(\theta_b(xx')))\\
&=\frac{1}{|B|}\sum_{b\in B_E}d_{\text{Hamm}}(\varphi(\theta_b(x)\varphi(\theta_b(x'))),\varphi(\theta_b(x)\theta_b(x'))),
\end{align*}
where the last equality is valid because, by \eqref{theta b is a homomorphism in E},   $\theta_b$ is a group homomorphism in $\COMM\limits_{E}^{_w}G$ and  $x,x'\in \COMM\limits_{E}^{_w}G$.
Since $\varphi$ is $(E_G,\varepsilon',d_{\text{Hamm}})$-multiplicative and since, by \eqref{E sub g}, $\theta_b(x),\theta_b(x')\in E_G$,
it follows that
\begin{equation*}d_{\text{Hamm}}(\Gamma(x,1)\Gamma(x',1),\Gamma(xx',1))<\varepsilon'\frac{|B_E|}{|B|}<\varepsilon'<\kappa<\varepsilon/6,\end{equation*}
once we choose $\kappa<\varepsilon/6$ in Lemma \ref{kappa} and use Remark \ref{choice of epsilon prime}.\\

In order to check  \ref{cuasimultiplicativa}\ref{item2}, recall that for any $h\in H$,  \eqref{gamma explicit} implies that \begin{equation*}\Gamma(1,h)(a,b)=(a,\sigma(h)b).\end{equation*} 
Then
\begin{equation*}d_{\text{Hamm}}(\Gamma(1,h)\Gamma(1,h'),\Gamma(1,hh')) = d_{\text{Hamm}}(\sigma(h)\sigma(h'),\sigma(hh')).\end{equation*}
 Since $\sigma$ is $(E_H,\varepsilon',d_{\text{Hamm}})$-multiplicative, and since by \eqref{def of E sub H} we have that $E_2\subseteq E\subseteq E_H$, then for $h,h' \in E_2$,
\begin{equation*}d_{\text{Hamm}}(\Gamma(1,h)\Gamma(1,h'),\Gamma(1,hh'))  < \varepsilon'<
\kappa<\varepsilon/6,\end{equation*}
once we choose $\kappa<\varepsilon/6$ in Lemma \ref{kappa} and use Remark \ref{choice of epsilon prime}.\\
 In order to check  \ref{cuasimultiplicativa}\ref{item3}, a straightforward  computation shows that for any $(x,h)\in G \wrw H$
\begin{equation*}\Gamma(x,h)(a,b)=(\Gamma(x,1) \Gamma(1,h))(a,b),\text { so }
\Gamma(x,h)=\Gamma(x,1) \Gamma(1,h).\end{equation*}

In order to check  \ref{cuasimultiplicativa}\ref{item4},
observe  that for any $(x,h)\in   G \wrw H$, equation \eqref{gamma explicit} gives the following identity 
\begin{align}\label{formula de gamma en b}
(\Gamma(1,h) \Gamma(x,1))(a,b)
 = 
\begin{cases}
(\varphi\theta_b(x)(a),\sigma(h)b) &\text{ if } b\in B_E;\\
(a,\sigma(h)b)   &\text { if } b\notin B_E.\\
\end{cases}
\end{align}
Now let $x\in {\rm proj}_G(F_0), h \in E_2$. Since $supp (x)\subseteq supp(E_1)$,  $x$ is of the form 
\begin{equation*}x=(g_{h_1})_{h_1}(g_{h_2})_{h_2}\ldots (g_{h_n})_{h_n},\text{ with }h_i\in supp(E_1),\end{equation*}
 where $(g_{h_i})_{h_i}$ denotes the element  $g_{h_i}$
  in the   $h_i$  copy of $G$ inside 
  $\2W{supp(E_1)}G\hookrightarrow\2W{E} G\hookrightarrow \2W{H} G$.
Then
\begin{equation*}\alpha_h(x)=\alpha_h\big((g_{h_1})_{h_1}(g_{h_2})_{h_2}\ldots (g_{h_n})_{h_n}\big)=(g_{h_1})_{hh_1}(g_{h_2})_{hh_2}\ldots (g_{h_n})_{hh_n}.\end{equation*}
Observe that since $h\in E_2$, then, by \eqref{def of E}, we have that $hh_i\in E$, so $\alpha_h(x)\in \2W{ E} G\hookrightarrow \2W{H} G.$
Hence, if $\sigma(h)b\in B_E$, then by \eqref{theta b in free product}, \eqref{theta b is a homomorphism in E} and \eqref{def-of-theta-b}, we have that
\begin{align*}
\theta_{\sigma(h)b}(\alpha_h(x)) &=\theta_{\sigma(h)b}((g_{h_1})_{hh_1}(g_{h_2})_{hh_2}\ldots (g_{h_n})_{hh_n})\\
&=\theta_{\sigma(h)b}^{(hh_1)}(g_{h_1}) \theta_{\sigma(h)b}^{(hh_2)}(g_{h_2})\ldots \theta_{\sigma(h)b}^{(hh_n)}(g_{h_n})\\
&=(g_{h_1})_{\sigma(hh_1)^{-1}\sigma(h)b}(g_{h_2})_{\sigma(hh_2)^{-1}\sigma(h)b}\ldots (g_{h_n})_{\sigma(hh_n)^{-1}\sigma(h)b}\\
&=(g_{h_1})_{\sigma(h_1)^{-1}b}(g_{h_2})_{\sigma(h_2)^{-1}b}\ldots (g_{h_n})_{\sigma(h_n)^{-1}b}\, ,
\end{align*}
where the last equality  is valid because  $h_i,h\in E$ 
and $\sigma(h)b\in B_E\subseteq B_2$, then by \eqref{def of B 2}, we have that
\begin{equation} \label{why we need b2}
\sigma(hh_i)^{-1}\sigma(h)b=\sigma(h_i)^{-1}\sigma(h)^{-1}\sigma(h)b=\sigma(h_i)^{-1}b.
\end{equation}
Moreover, if $b\in B_E$ then  by \eqref{def-of-theta-b}, we have that
 \begin{equation*}\theta_{b}(x)=(g_{h_1})_{\sigma(h_1)^{-1}b}(g_{h_2})_{\sigma(h_2)^{-1}b}\ldots (g_{h_n})_{\sigma(h_n)^{-1}b}.\end{equation*}
Hence,  if $b\in B_E$ and $\sigma(h)b\in B_E$ then 
\begin{equation}\label{igualdad soficidad punto 4}
\theta_{\sigma(h)b}(\alpha_h(x))=\theta_{b}(x).
\end{equation}
This entails that if $b\in B_E\cap\sigma(h)^{-1}B_E$ then
\begin{align*}
 \Gamma(\alpha_h(x),1)(\Gamma(1,h)(a,b))&= \Gamma(\alpha_h (x),1)(a,\sigma(h)b)\\
 &=(\varphi\theta_{\sigma(h)b}(\alpha_{h}(x))(a),\sigma(h)b)\\
 &=(\varphi\theta_b(x)(a),\sigma(h)b)\\
 &=\Gamma(1,h)\Gamma(x,1))(a,b)  \,\,\,\,\,\,\,\,\text{ {\it(here we use \eqref{formula de gamma en b})}} .
 \end{align*}
With all this at hand, we now proceed to estimate the Hamming distance in  \ref{cuasimultiplicativa}\ref{item4}.
\begin{align*}
  &d_{\text{Hamm}}(\Gamma(1,h)\Gamma(x,1), \Gamma(\alpha_h(x),1)\Gamma(1,h))\\
&=\sum_{b\in B}\frac{1}{|A||B|}|\{a\in A: \Gamma(1,h)\Gamma(x,1)(a,b)\neq \Gamma(\alpha_h(x),1)\Gamma(1,h)(a,b)\}|\\ 
&=\frac{1}{|A||B|}\sum_{b\in (B_E\cap\sigma (h)^{-1}B_E)^{c}} |\{a\in A: \Gamma(1,h)\Gamma(x,1)(a,b)\neq \Gamma(\alpha_h(x),1)\Gamma(1,h)(a,b)\}|\\
&\leq \frac{1}{|A||B|}|A| |(B_E\cap\sigma (h)^{-1}B_E)^{c}|
\leq\frac{1}{|B|}(|B\setminus B_E| + |B\setminus \sigma (h)^{-1}B_E|)\\
&\leq 2\kappa, \,\,\,\,\,\,\,\,\text{ {\it(here we use Lemma \ref{kappa})}}
\end{align*}
and this is smaller than $\varepsilon/6$ once we choose $\kappa<\varepsilon/12$ in Lemma \ref{kappa} and use Remark \ref{choice of epsilon prime}. This ends the proof that  $\Gamma$ is $(F_0,\varepsilon,d_{\rm{Hamm}})$-multiplicative.

Let us now prove that $\Gamma$ is $(F_0,\varepsilon,d_{\rm{Hamm}})$-free, namely let us prove that $d_{\text{Hamm}}(\Gamma(x,h),1)>1-\varepsilon$,
 whenever $(x,h)\in F_0\setminus\{1\}$.
Since $\{(a,b):\,\,\sigma(h)b\neq b\} \subseteq \{ (a,b):\Gamma(x,h)(a,b)\neq (a,b)\}$,
it follows that $d_{\text{Hamm}}(\Gamma(x,h),1) \geq d_{\text{Hamm}}(\sigma(h),1)$.
Using that $\sigma$ is  $(E_H,\varepsilon',d_{\text{Hamm}})$-free, for all $h\in E_H\setminus\{1\}$ we have
\begin{equation*}d_{\text{Hamm}}(\Gamma(x,h),1) \geq d_{\text{Hamm}}(\sigma(h),1) \geq 1-\varepsilon' > 1-\kappa >1-\varepsilon,\end{equation*}
where the last inequalities hold because we use Remark \ref{choice of epsilon prime} and we chose $\kappa<\varepsilon/12$ in Lemma \ref{kappa}.
It remains to prove  $(F_0,\varepsilon,d_{\rm{Hamm}})$-freeness in the case when $h=1$. 
To that end,  first observe  that if $(x,1)\in F_0$, then by \eqref{def of E1}  and \eqref{def of E sub H}, $supp(x)\subseteq E$.
Recall that $b\in B_E$, then $\theta_b$ is an injective homomorphism of groups when it is restricted to $\COMM\limits_{E}^{_w}G$.
It follows that 
\begin{equation} \label{theta(x)neq1}
\text{ if }b\in B_E \text { and } (x,1)\in F_0\setminus\{ 1\} \text{, then }\theta_b(x)\neq 1.
\end{equation}
 We can now compute the Hamming distance
\begin{align*}
   d_{\text{Hamm}}(\Gamma(x,1),1) &=\frac{1}{|A||B|}|\{(a,b)\in A\times B: \Gamma(x,1)(a,b)\neq (a,b)\}| \\
&=\frac{1}{|A||B|}|\{(a,b)\in A\times B_E: \varphi(\theta_b(x))(a)\neq a\}|\\
&= \frac{1}{|B|}\sum_{b\in B_E} \frac{1}{|A|}|\{a\in A: \varphi(\theta_b(x))(a)\neq a\}|\\
&=\frac{1}{|B|}\sum_{b\in B_E} d_{\text{Hamm}}(\varphi(\theta_b(x)),1) \\
&\geq \frac{|B_E|}{|B|} (1-\varepsilon') \,\text{ {\it(we use that }}\varphi  \text{ {\it is }} (E_G,\varepsilon',d_{\rm{Hamm}})\text{{\it -free and }}\theta_b(x)\neq 1)\\
 &>(1-\kappa)(1-\varepsilon')\,\,\,\,\,\,\,\,\text{ {\it(here we use Lemma \ref{kappa})}}\\
 &>(1- \kappa)^2> 1-\varepsilon,
 \end{align*}
where the last inequality is because we chose $\kappa<\varepsilon/12$ in Lemma \ref{kappa}.
\end{proof}
\subsection{Weakly sofic verbal wreath products}\label{section weakly sofic}
In \cite{MR2455513} Glebsky and Rivera defined another metric approximation in groups called {\it weakly sofic}. 
In this case, symmetric groups with the Hamming distance get replaced by finite groups with bi-invariant metrics. 
\begin{definition}
A countable discrete group $G$ is said to be weakly sofic if there exists $\alpha>0$ such that for every $\varepsilon>0$ and every $F \subseteq G$ 
finite set, there exist a  finite group $A$ with a bi-invariant metric $d$ and a function $\phi: G \to A$ satisfying that $\phi(1)=1$ and 
\begin{itemize}
\item $(F,\varepsilon,d)$-multiplicative: for all $g,g'\in F$  we have that $d(\phi(g)\phi(g'),\phi(gg')) < \varepsilon$;
 \item $(F,\alpha,d)$-free: if $g\in F\setminus \{1\}$ we have that $d(\phi(g),1) \geq  \alpha$.
\end {itemize} 
\end{definition}

By replacing the metric $d$ with $d^\prime:=\min\{\frac{d}{\alpha},1\}$ it can be assumed that ${\rm diam}(A):=\sup_{x\in A}\{d(x,1)\}=1$ and that 
$d(\phi(g),1)=1$ for all $g\in F\setminus \{1\}$. 

Given a finite group $A$ with a bi-invariant metric $d$ for which ${\rm diam}(A)\leq1$ and a finite set $B$, consider the permutational wreath product $A\wr_B Sym(B)$. In \cite[Proposition 2.9]{HAY18} (see also  \cite[$\S$5]{MR3632893}) it is shown that the following function 
\begin{equation}\label{metric in permut. wreath product}\tilde d(((x_b)_{b\in B},\tau),((y_b)_{b\in B},\rho) ):=
d_{\text{Hamm}}(\tau,\rho)+\frac{1}{|B|}\sum_{\stackrel{b\in B}{\rho(b)=\tau(b)}} d(x_{\tau(b)},y_{\tau(b)})
\end{equation}
is a bi-invariant metric in $A\wr_B Sym(B)$, and with this metric ${\rm diam}(A\wr_B Sym(B))=1$.

\begin{proof}[Sketch of the proof of Theorem \ref{theo hyper intro} in the weakly sofic case] Let $F\subseteq G\wrw H$ be a finite subset and let $\varepsilon >0$. Define the sets
 $F_0,E_1,E_2,E,E_H, B,B_1,B_2,B_E$ the function
$\Theta$ and the set $E_G$ as in the proof of Theorem  $\ref{theo sofic intro}$.

Since $G$ is weakly sofic, the hypothesis in the statement of Theorem \ref{theo hyper intro}, together with Corollary \ref{amenability of many products}, implies that the verbal product $\2W{B }G$ is  weakly sofic. 
Hence, given the finite set $E_G$
and $\varepsilon'>0$,
there exist a finite group $A$ with a bi-invariant metric $d$ of ${\rm diam}(A)=1$, and  $\varphi:\COMM\limits_{B}^{_w}G \to A$ 
a $(E_G,\varepsilon')$-weak sofic approximation of $\2W{B} G$ with $\varphi(1)=1$ and with $d(\varphi(g),1)=1$ for all $g\in E_G\setminus \{1\}$.
Finally, define the function
\begin{align*}
    \Gamma : G \wrw H &\to  A\wr_B Sym(B)    \\
    \Gamma(x,h):=
((y_b)_{b \in B}, \sigma(h)) 
&\text{ where }y_b=
\begin{cases} 
	\varphi(\theta_{b}(x)) &\text{ if } b\in B_E;\\
	1 &\text{ if } b\notin B_E.\\
\end{cases}
\end{align*}
{\bf Claim:} $\Gamma$ is a $(F_0,\varepsilon)$-weakly sofic approximation of $G \wrw H$ 
when $A\wr_B Sym(B)$ is endowed with the bi-invariant metric $\tilde d$ of \eqref{metric in permut. wreath product} and $\alpha=1/2$.
\\
In order to prove that $\Gamma$ is  $(F_0,\varepsilon,\tilde d)$-multiplicative it is enough to show that the premises of Lemma \ref{cuasimultiplicativa} hold true.
In order to check  \ref{cuasimultiplicativa}\ref{item1}, 
\begin{align*}
    \tilde d(\Gamma(x,1)\Gamma(x',1),\Gamma(xx',1))
 &=\frac{1}{|B|}\sum_{b\in B_{E}}d(\varphi( \theta_b(x))\varphi( \theta_b(x')),\varphi( \theta_b(xx')))\\
&=\frac{1}{|B|}\sum_{b\in B_{E}}d(\varphi( \theta_b(x))\varphi( \theta_b(x')),\varphi( \theta_b(x)\theta_b(x')))\\
&\leq\varepsilon^\prime\frac{|B_E|}{|B|}\leq\varepsilon^\prime\leq\kappa\leq\varepsilon/6.
\end{align*}
 The proofs of \ref{cuasimultiplicativa}\ref{item2} and  \ref{cuasimultiplicativa}\ref{item3} are identical to the ones given in the sofic case.
In order to check  \ref{cuasimultiplicativa}\ref{item4}, simple computations give that
\begin{equation*} \Gamma(\alpha_h(x),1)\Gamma(1,h)=((z_b)_{b\in B},\sigma(h)) \text{ where }
 z_b=
\begin{cases} 
	\varphi(\theta_{b}(\alpha_{h}(x))) &\text{ if } b\in B_E;\\
	1 &\text{ if } b\notin B_E;\\
\end{cases}\end{equation*} 
and 
\begin{equation*}\Gamma(1,h) \Gamma(x,1)=((w_{b})_{b\in B},\sigma(h)) \text { where }
w_b=
\begin{cases} 
	\varphi(\theta_{\sigma(h)^{-1}b}(x)) &\text{ if } \sigma(h)^{-1}b\in B_E;\\
	1 &\text{ if } \sigma(h)^{-1}b\notin B_E.\\
\end{cases}\end{equation*} 
 Observe that for $ b\in B_E\cap \sigma(h)^{-1}B_E$, the identity \eqref{igualdad soficidad punto 4} holds true,
 this is because its proof does not depend on the metric approximation of $\2W{B} G$.
 Hence, if $ b\in B_E\cap \sigma(h)B_E$ it follows that $z_b=w_b$.
 then
 \begin{align*}
  \tilde d(\Gamma(1,h)\Gamma(x,1), \Gamma(\alpha_h(x),1)\Gamma(1,h))\}
  &=
  \frac{1}{|B|}\sum_{b\in (B_E\cap\sigma (h)B_E)^{c}} d(z_b,w_b)\\
  &\leq  \frac{|(B_E\cap\sigma (h)B_E)^{c}|}{|B|}
\end{align*}
and the rest of the proof is identical to the sofic case.

It remains to show that $\Gamma$ is  $(F_0,1/2,\tilde d)$-free. If $(x,h)\in F_0$ and $h\neq 1$ then 
$\tilde d(\Gamma(x,h),1)\geq d_{\text{Hamm}}(\sigma(h),1)\geq 1-\varepsilon\geq \frac{1}{2}$, when $\varepsilon<\frac{1}{2}$. 
If $b\in B_E$ and $(x,1)\in F_0$ with $x\neq 1$, then  by \eqref{theta(x)neq1} $\theta_b(x)\neq 1$ and 
\begin{equation*}\tilde d(\Gamma(x,1),1)=\frac{1}{|B|}\sum_{b\in B_{E}}d(\varphi(\theta_b(x),1))=\frac{|B_E|}{|B|}\geq(1-\kappa)\geq\frac{1}{2}.\qedhere
\end{equation*}
\end{proof}

\subsection{Hyperlinear verbal wreath products}\label{section hyperlinear}
The purpose now is to show the hyperlinear case of Theorem \ref{theo hyper intro} from the introduction, which generalizes \cite[Theorem 1.3 (ii)]{HAY18}. 
To that end, recall that
if $\CH$ is a finite dimensional Hilbert space with orthonormal basis $\beta=\{v_1,\dots,v_n\}$, the normalized 
trace on $\mathcal B(\CH)$ is given by the formula \begin{equation*}tr(A):=\frac{1}{\dim(\CH)} \sum_{i=1}^{n} \langle Av_i,v_i\rangle.\end{equation*}
It induces the inner product  $\langle A,B\rangle:=tr(AB^\ast)$  and its corresponding
{\it Hilbert-Schmidt} norm $\|A\|_2:=\langle A,A\rangle^{1/2}$ on $\mathcal B(\CH)$.
We will denote $d_{\text{HS}}(A,B):=\| A-B\|_2$ the Hilbert-Schmidt distance and $\CU(\CH)$ the group of unitary operators on  $\CH$.

A countable discrete group is {\it hyperlinear} if it embeds in  the unitary group of $R^\omega$, the ultrapower of the hyperfinite $\rm{II}_1$ factor. This term was coined by R\u{a}dulescu around 2000, but his article appeared in print several years later, \cite{MR2436761}. R\u{a}dulescu showed that a group is hyperlinear if and only if its group von Neumann algebra embeds in  $R^\omega$, \cite[Poposition 2.6]{MR2436761}
(see also  \cite[Proposition 7.1]{MR2072092}). 
That is exactly the same as saying that hyperlinear groups are the ones whose group von Neumann algebra verifies the Connes'
embedding problem. From this, it is an exercise in ultraproducts and finite von Neumann algebras to show that a group is hyperlinear if and only if it verifies the following ``finitary'' definition.

\begin{definition}
A group $G$ is hyperlinear if for every $\varepsilon>0$ and every $F \subseteq G$ finite set, there exist a finite dimensional Hilbert space $\CH$ and a function $\phi: G \to \CU(\CH)$ satisfying that $\phi(1)=1$ and 
\begin{itemize}
\item $(F,\varepsilon,d_{\rm{HS}})$-multiplicative: for all $g,g'\in F$  we have that $d_{\rm{HS}}(\phi(g)\phi(g'),\phi(gg')) < \varepsilon$;
 \item $(F,\varepsilon)$-trace preserving: if $g\in F\setminus \{1\}$ we have that $|tr(\phi(g))| < \varepsilon$;
\end {itemize} 
\end{definition}

\begin{proof}[Proof of Theorem \ref{theo hyper intro} in the hyperlinear case] \label{prof of hyperlinear}Let $F\subseteq G\wrw H$ be a finite subset and let $\varepsilon >0$. Define the sets
 $F_0,E_1,E_2,E,E_H, B,B_1,B_2,B_E$, the function
$\Theta$ and the set $E_G$ as in the proof of Theorem  $\ref{theo sofic intro}$.

Since $G$ is hyperlinear, the hypothesis in the statement of Theorem \ref{theo hyper intro}, together with Corollary \ref{amenability of many products}, implies that the verbal product $\2W{B }G$ is  hyperlinear. 
Hence, given the finite set $E_G$
and $\varepsilon'>0$,
there exist a finite dimensional Hilbert space $\CH$, with orthonormal basis $\tilde\beta:=\{v_1, \dots, v_n\}$ and  $\varphi:\COMM\limits_{B}^{_w}G \to \CU(\CH)$ 
a $(E_G,\varepsilon')$-hyperlinear approximation of $\2W{B} G$ with $\varphi(1)=1$.
As in the proof of Theorem  $\ref{theo sofic intro}$, we define
\begin{align*}
    \varphi_\star:  \2W{B} G \wr_B Sym(B) &\to \mathcal U(\CH)\wr_B Sym(B) \\
    ((x_b)_{b\in B},\tau) &\mapsto((\varphi(x_b))_{b\in B},\tau).
\end{align*}

Consider the Hilbert space $\bigoplus_B \CH$, and its orthonormal basis $\beta:=\{v_i^b: b\in B\}$, where, for each $b\in B$, $v_i^b$ denotes the element $v_i$ of $\tilde\beta$ in the $b^{th}$ copy of $\CH$ inside $\bigoplus_B \CH$. 
 We denote with $\rho$ the  permutational action of $Sym (B)$ on  $\bigoplus_B \CU(\CH)$ and $\rho_\tau$ the automorphism corresponding to $\tau\in Sym(B)$.

For $U\in \CU(\CH)$, we will denote $U^b\in \bigoplus_B \CU(\CH)$ the image of $U$ under the embedding of $\CU(\CH)$ into the $b^{th}$-coordinate of $\bigoplus_B \CU(\CH)$ and let $\eta$ be the diagonal embedding defined by
\begin{align*}
   \eta: \bigoplus_{B} \mathcal{U}(\CH) &\to \CU\Big(\bigoplus_B \CH\Big)\\
   \Big\langle\eta ( (U_b)_{b\in B})v_i^{\tilde b},v_j^{\tilde{\tilde b}}\Big\rangle &:=
   \begin{cases} 
	   \langle U_{\tilde b} v_i^{\tilde b},v_j^{\tilde b}\rangle &\text{ if } \tilde b=\tilde{\tilde b};\\
  			0 &\text{ if not}.
 \end{cases}
\end{align*}

The permutation by blocks of the basis $\beta$ defines the homomorphism
\begin{align*}
    P: Sym(B) &\to \mathcal U\Big(\bigoplus_B \CH\Big)\\
P(\tau)(v_i^b)&:=  v_i^{\tau(b)}.   
\end{align*}
Note that for $U\in\CU(\CH)$,  and for any $b\in B$, we have that $P(\tau)\eta(U^b)P(\tau^{-1})= \eta(\rho_\tau(U^b))$. Indeed, this is because
 for every $v_i^{\tilde b} \in \beta$ we have, on the one hand
\begin{equation*}\eta(\rho_\tau(U^b))v_i^{\tilde{b}} =\eta(U^{\tau(b)})v_i^{\tilde{b}}= \begin{cases}
U^{\tau(b)}v_i^{\tau(b)} &\text{if }  \tilde{b}=\tau({b}); \\
v_i^{\tilde{b}} &\text{if not};
\end{cases}\end{equation*}
and on the other hand, 
\begin{equation*}
P(\tau)\eta(U^b)P(\tau^{-1})v_i^{\tilde b}=
P(\tau)\eta(U^b)v_i^{\tau^{-1}(\tilde b)}
=
\begin{cases}
P(\tau)U^b v_i^{\tau^{-1}(\tilde b)} &\text{ if } b=\tau^{-1}(\tilde b);\\
P(\tau)v_i^{\tau^{-1}(\tilde b)} &\text{ if not}.
\end{cases}
\end{equation*}
By taking  projections and inclusions, it follows that for any $(U_b)_{b\in B}\in \bigoplus_B \CU(\CH)$ we have that 
\begin{equation*}P(\tau)\eta((U_b)_{b\in B})P(\tau^{-1})= \eta(\rho_\tau((U_b)_{b\in B})).\end{equation*}
Using this identity, it is easy to show that the following function is a group homomorphism 
\begin{align*}
    \psi:\mathcal U(\CH) \wr_B Sym(B) &\to \mathcal U\Big(\bigoplus_B \CH\Big) \\
\psi((U_b)_{b\in B},\tau)&:= \eta((U_b)_{b\in B}) P(\tau).
\end{align*}
 
Define $\Gamma:G\wrw H\to \mathcal U\big(\bigoplus_B \CH\big)$ as the composition map $\Gamma:= \psi \varphi_\star \Theta$.
To be more explicit, for any $(x,h) \in G\wrw H$, we have that
\begin{align} \label{Gamma explicit hyperlinear}
    \Gamma(x,h)(v_i^b)= \begin{cases} v_i^{\sigma(h)b} &\text{ if } \sigma(h)b \notin B_E;\\
    \eta \big((\varphi(\theta_{\sigma(h)b}(x)))^{\sigma(h)b}\big) v_i^{\sigma(h)b} &\text{ if } \sigma(h)b \in B_E.
    \end{cases}
\end{align}

{\bf Claim:} $\Gamma$ is a $(F_0,\varepsilon)$-hyperlinear approximation of $G \wrw H$.\\
In order to prove that $\Gamma$ is  $(F_0,\varepsilon,d_{\rm{HS}})$-multiplicative it is enough to show that the premises of Lemma \ref{cuasimultiplicativa} hold true.\\
In order to check  \ref{cuasimultiplicativa}\ref{item1}, 
take $(x,1), (x',1)$ with $x,x' \in E_1$. Note that 
\begin{equation*}
supp(x),supp(x'),supp(xx')\subseteq supp(E_1)\subseteq E.
\end{equation*}
In particular $x,x',xx'\in \2W{E}G$.
On the one hand, if $b\notin B_E$,  by \eqref{Gamma explicit hyperlinear}, we have that $\Gamma(x,1)(v_i^b)=v_i^b$ for any $x\in\2W{H}G$.
On the other hand, if $b \in B_E$, note that $\Gamma(x^\prime,1)(v_i^b)$ is in the $b^{th}$-copy of $\CH$ inside $\bigoplus_B \CH$ and hence we have the identities
\begin{equation} \label{identity used to prove item 1}   \Gamma(x,1)\Gamma(x',1)v_i^b=\Gamma(x,1)\Big(\eta\big((\varphi(\theta_b(x')))^{b}\big)v_i^b\Big)= 
\eta\Big(\big(\varphi(\theta_b(x))\varphi(\theta_b(x'))\big)^b\Big)v_i^b;
\end{equation}
and
\begin{equation} \label{identity used to prove item 1b} 
\Gamma(xx',1)v_i^b=\eta\Big(\big(\varphi(\theta_b(xx'))\big)^b\Big)v_i^b.
\end{equation}
Then 
\begin{align*}
 \|\Gamma(x,1)&\Gamma(x',1) -  \Gamma(xx',1)\|_2^2 
  =\frac{1}{n|B|} \sum\limits_{b\in B}\sum\limits_{i=1}^{n}\big\|\Gamma(x,1)\Gamma(x',1)v_i^{b} - \Gamma(xx',1)v_i^{b} \big\|^2\\
   &=\frac{1}{|B|}\sum\limits_{b\in B_E}\frac{1}{n}\sum\limits_{i=1}^{n}\big\|\Gamma(x,1)\Gamma(x',1)v_i^{b} - \Gamma(xx',1)v_i^{b}\big \|^2\\\
 &= \frac{1}{|B|}\sum\limits_{b\in B_E}\frac{1}{n}\sum\limits_{i=1}^{n}\big\|\eta\Big(\big(\varphi(\theta_b(x))\varphi(\theta_b(x'))\big)^b\Big)v_i^b
 - \eta\Big(\big(\varphi(\theta_b(xx'))\big)^b\Big)v_i^b\big \|^2\\
 &=\frac{1}{|B|}\sum\limits_{b\in B_E} \big\| \varphi(\theta_b(x))\varphi(\theta_b(x'))
  -  \varphi(\theta_b(xx'))\big\|_2^2 \\
 &= \frac{1}{|B|}\sum\limits_{b\in B_E}\big\|\varphi(\theta_b(x))\varphi(\theta_b(x')) - \varphi(\theta_b(x)\theta_b(x'))\big\|_2^2\,\,,
    \end{align*}
    where the last equality is valid because  $\theta_b$ is a group homomorphism in $\2W{E}G$ and  $x,x'\in \2W{E}G$.
Since $\varphi$ is $(E_G,\varepsilon',d_{\text{HS}})$-multiplicative and since, by definition, $\theta_b(x),\theta_b(x')\in E_G$,
it follows that
\begin{equation*}d_{\text{HS}}(\Gamma(x,1)\Gamma(x',1),\Gamma(xx',1))<\varepsilon'\sqrt{\frac{|B_E|}{|B|}}<\varepsilon'<\kappa<\varepsilon/6,\end{equation*}
once we choose $\kappa<\varepsilon/6$ in Lemma \ref{kappa} and use Remark \ref{choice of epsilon prime}.

In order to check  \ref{cuasimultiplicativa}\ref{item2}, observe first that for any $h\in H$,  \eqref{Gamma explicit hyperlinear} gives
\begin{equation} \label{identity used to prove item 2}
 \Gamma(1,h')\Gamma(1,h)v_i^b=v_i^{\sigma(h')\sigma(h)b}
\,\,\,\,\,\,\text{ and }\,\,\,\,\, 
\Gamma(1,hh')v_i^{b}=v_i^{\sigma(hh')b}.
\end{equation}
Now take $h,h' \in E_H\supseteq E_2$. Then
\begin{align*}
\big\|\Gamma(1,h)\Gamma(1,h')-\Gamma(1,hh')\big\|_2^2 &= \frac{1}{n|B|} \sum_{b\in B} \sum\limits_{i=1}^{n} \big\|v_i^{\sigma(h)\sigma(h')b} - v_i^{\sigma(hh')b}\big\|^2 \\
&= \frac{1}{n|B|} \sum_{b\in B }\sum\limits_{i=1}^{n} 2\big |\{b\in B : \sigma(h)\sigma(h')b \neq \sigma(hh')b \}\big|\\
&= 2 d_{\text{Hamm}}(\sigma(h)\sigma(h'),\sigma(hh')).
\end{align*}
It follows that 
$d_{\text{HS}}(\Gamma(1,h)\Gamma(1,h'),\Gamma(1,hh')) \leq \sqrt{2\varepsilon'} \leq \sqrt{2\kappa}\leq \varepsilon/6$
once we choose $\kappa<\varepsilon^2/72$ in Lemma \ref{kappa} and use Remark \ref{choice of epsilon prime}.
 
In order to check  \ref{cuasimultiplicativa}\ref{item3}, a straightforward  computation using \eqref{Gamma explicit hyperlinear} shows that for any $(x,h)\in G \wrw H$ it holds that 
$\Gamma(x,h)=\Gamma(x,1) \Gamma(1,h)$.

In order to check  \ref{cuasimultiplicativa}\ref{item4}, using \eqref{Gamma explicit hyperlinear}, we have that
\begin{equation*}\Gamma(1,h) \Gamma(x,1)v_i^{b}= \begin{cases}
 \Gamma(1,h)\eta\big((\varphi(\theta_b(x)))^b\big)v_i^{b}=\eta \big(\varphi(\theta_b(x))^{\sigma(h)b}\big)v_i^{\sigma(h)b} &\text{if }b\in B_E;\\
v_i^b &\text{if }b\notin B_E. \end{cases}
 \end{equation*}
Observe that for $ b\in B_E\cap \sigma(h)^{-1}B_E$, the identity \eqref{igualdad soficidad punto 4} holds true,
 this is because its proof does not depend on the sofic or hyperlinear approximation  of $\2W{B} G$. Hence, if $ b\in B_E\cap \sigma(h)^{-1}B_E$
 then
 \begin{align}\label{identity used to prove item 4}
 \Gamma(\alpha_h(x),1)\Gamma(1,h)v_i^b&= \Gamma(\alpha_h (x),1)v_i^{\sigma(h)b}
 = \eta\Big(\big(\varphi(\theta_{\sigma(h)b}(\alpha_h(x)))\big)^{\sigma(h)b}\Big)v_i^{\sigma(h)b}\\
 &= \eta\Big(\big(\varphi(\theta_b(x))\big)^{\sigma(h)b}\Big)v_i^{\sigma(h)b}
 =\Gamma(1,h) \Gamma(x,1)v_i^{b}.\nonumber
\end{align}
 With this at hand, we now proceed to estimate the Hilbert-Schmidt distance in  \ref{cuasimultiplicativa}\ref{item4}.
\begin{align*}
 \big\|\Gamma(1,h)& \Gamma(x,1) - \Gamma(\alpha_h(x),1)\Gamma(1,h)\big\|_2^2\\
&=\frac{1}{n|B|}\sum_{b\in B}\sum\limits_{i=1}^{n}\big\|\Gamma(1,h)\Gamma(x,1)v_i^b - \Gamma(\alpha_h(x),1)\Gamma(1,h)v_i^b\big\|^2\\ 
&
=\frac{1}{n|B|}\sum_{b\in (B_E\cap\sigma (h)^{-1}B_E)^{c}}\sum\limits_{i=1}^{n} \big\|\Gamma(1,h)\Gamma(x,1)v_i^b - \Gamma(\alpha_h(x),1)\Gamma(1,h)v_i^b\big\|^2\\
&\leq \frac{4}{|B|} |(B_E\cap\sigma (h)^{-1}B_E)^{c}|
\leq\frac{4}{|B|}\Big(|B\setminus B_E| + |B\setminus \sigma (h)^{-1}B_E|\Big)
\leq 8\kappa< (\varepsilon/6)^2,
\end{align*}
once we choose $\kappa<\varepsilon^2/288$ in Lemma \ref{kappa} and use Remark \ref{choice of epsilon prime}.

Let us now prove that $\Gamma$ is $(F_0,\varepsilon)$-trace preserving. Let $(x,h)\in F_0\setminus\{1\}$.
Suppose first that $h \neq 1$ and note that if $b\in B_E$ then for all $1 \leq i \leq n$ we have that $\Gamma(x,h)s_i^b$ belongs in the subspace generated by $\{s_i^{\sigma(h)b}: 1\leq i\leq n\}$ which is orthogonal to $s_i^b$. So in this case $\langle \Gamma(x,h)s_i^b , s_i^b\rangle = 0$. Then
\begin{align*}
|{\rm{tr}}(\Gamma(x,h))| 
&=|\langle \Gamma(x,h), 1\rangle| \\
&\leq \frac{1}{n|B|} \sum_{b\in B} \sum_{i=1}^n |\langle \Gamma(x,h)s_i^{b}, s_i^b\rangle| \\
&=\frac{1}{n|B|}\Bigg( \sum_{b\in B_E} \sum_{i= 1}^n |\langle \Gamma(x,h) s_i^{b}, s_i^b\rangle| + \sum_{b\in B\setminus B_E} \sum_{i= 1}^n |\langle \Gamma(x,h) s_i^{b}, s_i^b\rangle|\Bigg) \\
&=  \frac{1}{n|B|} \sum_{b\in B\setminus B_E} \sum_{i= 1}^n| \langle \Gamma(x,h) s_i^{b}, s_i^b\rangle|
\leq \frac{| B\setminus B_E|}{|B|} 
\leq \kappa < \varepsilon,
\end{align*}
where the last inequality is because we chose $\kappa<\varepsilon^2/288$ in Lemma \ref{kappa}.\\
If $b\in B_E$ and $(x,1)\in F_0$ with $x\neq 1$, then  by \eqref{theta(x)neq1} $\theta_b(x)\neq 1$, so
\begin{align*}
 |{\rm{tr}}(\Gamma(x,1))| &=|\langle \Gamma(x,1),1\rangle| \\
& \leq \frac{1}{|B|} \sum_{b\in B}\Big |\frac{1}{n}\sum\limits_{i=1}^n \langle \Gamma(x,1)s_i^b,s_i^b\rangle\Big |\\
&= \frac{1}{|B|}\sum_{b\in B_E}\Big |\frac{1}{n}\sum\limits_{i=1}^n \langle \Gamma(x,1)s_i^b,s_i^b\rangle\Big |+
\frac{1}{|B|}\sum_{b\in B \setminus B_E}
\Big |\frac{1}{n}\sum\limits_{i=1}^n \langle \Gamma(x,1)s_i^b,s_i^b\rangle\Big|\\
&\leq \frac{1}{|B|} \sum_{b\in B_E}\Big |\frac{1}{n}\sum\limits_{i=1}^n 
\big\langle \eta\Big(\big(\varphi(\theta_b(x))\big)^b\Big) s_i^b,s_i^b\big\rangle\Big| + \frac{|B \setminus B_E|}{|B|} \\
&\leq \frac{1}{|B|}\sum_{b \in B_E} |{\rm{tr}}(\varphi(\theta_b(x)))| + \kappa
\leq \frac{|B_E|}{|B|}\varepsilon' + \kappa 
 < 2\kappa< \varepsilon,
\end{align*}

where the last inequality is because we chose $\kappa<\varepsilon^2/288$ in Lemma \ref{kappa}.
\end{proof}

\subsection{Linear sofic  verbal wreath products}

If $K$ is a field, then $d_{\rm{rk}}(A,B):=\frac{1}{n}\rm{rank}(A-B)$ is a bi-invariant metric in $\rm{GL}_{n}(K)$.
In \cite{MR3592512}, Arzhantseva and P\u{a}unescu used this metric to introduce another approximation property in groups called linear soficity. 
As usual, this property is defined in terms of embeddings in ultraproducts,
 this time of invertible matrices with entries in a fixed field $K$. In \cite[Proposition 5.13]{MR3592512} 
 the authors give an equivalent finitary definition that we proceed to record.
\begin{definition}
 Let  $K$ be a field. A group $G$ is linear sofic over $K$ if for every $\varepsilon>0$ and every 
 $F \subseteq G$ finite set, there exist  a function $\phi: G \to \rm{GL}_n(K)$ satisfying that $\phi(1)=1$ and 
\begin{itemize}
\item $(F,\varepsilon,d_{\rm{rk}})$-multiplicative: for all $g,g'\in F$  we have that $d_{\rm{rk}}(\phi(g)\phi(g'),\phi(gg')) < \varepsilon$;
 \item $(F,\varepsilon,d_{\rm{rk}})$-free: if $g\in F\setminus \{1\}$ we have that $d_{\rm{rk}}(1,\phi(g))>\frac{1}{4} -\varepsilon$.
\end {itemize} 
\end{definition}
Minor modifications to the proof for the hyperlinear case of Theorem \ref{theo hyper intro} allow us to prove the variant for linear sofic groups.
\begin{proof}[Sketch of the proof of Theorem \ref{theo hyper intro} in the linear sofic case]
Let $K$ be a field. In the proof of the hyperlinear case of Theorem  \ref{theo hyper intro} given in $\S$\ref{section hyperlinear}, replace $\CH$ by $K^n$, $\CU(\CH)$ by $\rm{GL}_n(K)$,
the orthonormal basis  $\tilde \beta$ of $\CH$ by a basis  $\tilde \beta$ of $K^n$,
$\varphi$ by a $(E_G,\varepsilon^\prime)$-linear sofic approximation  of $\2W{B} G$,
and the inner product used to define the diagonal embedding $\eta$ by taking coordinates with respect to the basis $\beta$. 
Then  \eqref{Gamma explicit hyperlinear}, \eqref{identity used to prove item 1}, \eqref{identity used to prove item 1b}, \eqref{identity used to prove item 2}, and \eqref{identity used to prove item 4}
remain valid because they do not depend on the matrices being unitary. 
With these identities at hand, it is easy to see that $\Gamma$ verifies the four items of Lemma \ref{cuasimultiplicativa}. Thus  $\Gamma$ is $(F_0,\varepsilon,d_{\rm{rk}})$-multiplicative. 

Let us prove that
 $d_{\rm{rk}}(\Gamma(x,h),1)\geq \frac{1}{4}-\varepsilon$,
whenever $(x,h)\in F_0\setminus\{1\}$.
Suppose first that $h \neq 1$ and note that $\ker(\Gamma(x,h)-1)\subseteq \text{span}\{v_i^b\in\beta:\sigma(h)b=b\}$. This subspace has dimension
equal to $n|\{b\in B:\sigma(h)b=b\} |=n(|B|-|B|d_{\rm{Hamm}}(\sigma(h),1))\leq n|B|\varepsilon^{\prime}$. 
It follows that $\frac{1}{n|B|}{\rm{rank}}(\Gamma(x,h)-1)\geq 1-\varepsilon^{\prime}\geq 1-\varepsilon$.
When $h=1$, and $b\in B_E$, then $\theta_b(x)\neq 1$.
 We then have that 
\begin{align*}
\frac{1}{n|B|}{\rm rank}(\Gamma(x,1)-1)&=\frac{1}{|B|}\sum_{b\in B_{E}}\frac{1}{n}{{\rm rank}}(\varphi(\theta_b(x))-1)
\geq\frac{|B_{E}|}{|B|}\Big(\frac{1}{4}-\varepsilon^{\prime}\Big)\\
&\geq(1-\kappa)\Big(\frac{1}{4}-\varepsilon^{\prime}\Big)>\frac{1}{4}-\varepsilon,
\end{align*}
once we choose an adequate $\kappa$ in Lemma \ref{kappa}.
\end{proof}
We end this section by proving the application of the Shmelkin embedding discussed in the introduction.
\begin{proof}[Proof of Corollary \ref{coro Shmelkin}]
The Shmelkin embedding \cite{MR0193131} is 
\begin{equation}\label{Shmelkin1}
\faktor{F}{W(G)}\hookrightarrow \Big(\faktor{F}{W(F)}\Big)\wrw \faktor{F}{G}\,\,.
\end{equation}
Observe that the Magnus embedding is the case when $W=\{n_1\}=\{[x_2,x_1]\}$.

If $W=\{n_k\}$, $\faktor{F}{W(F)}$ is a free nilpotent group of order $k$; if  $W=\{s_k\}$, $\faktor{F}{W(F)}$ is a free solvable group of derived length $k$, and if $W=\{x_1^k\}$ with $k=2,3,4,6$, $\faktor{F}{W(F)}$ is a finitely generated free Burnside group of exponent $k=2,3,4,6$, hence all of them are amenable, and thus sofic. Assume that the quotient group $\faktor{F}{G}$ is sofic. Then, by Theorem \ref{theo sofic intro}, the restricted verbal wreath products in \eqref{Shmelkin1}  are sofic. Since soficity is preserved by subgroups, it follows that $\faktor{F}{n_k(G)}$, $\faktor{F}{s_k(G)}$ and $\faktor{F}{G^k}$ are sofic. 
The proof in the case when  $\faktor{F}{G}$ has the Haagerup property is almost identical.
\end{proof}

\textbf{Acknowledgments.} J. Brude was supported in part by a CONICET Doctoral Fellowship. R. Sasyk was supported in part through the grant PIP-CONICET 11220130100073CO. We thank Prof. Denis Osin for pointing us that verbal wreath products of groups were already introduced by Shmelkin in the mid sixties and for sharing with us some relevant references. We also thank Pedro Marun for helping us to get hold of several Math Reviews unavailable in our country.

\bibliographystyle{plain}
\bibliography{verbal}

\end{document}